\newcommand{\df}[1]{{\it{#1}}{\index{#1}}}
\newsavebox\myboxA
\newsavebox\myboxB
\newlength\mylenA
\newcommand*\xoverline[2][0.75]{%
    \sbox{\myboxA}{$\m@th#2$}%
    \setbox\myboxB\null
    \ht\myboxB=\ht\myboxA%
    \dp\myboxB=\dp\myboxA%
    \wd\myboxB=#1\wd\myboxA
    \sbox\myboxB{$\m@th\overline{\copy\myboxB}$}
    \setlength\mylenA{\the\wd\myboxA}
    \addtolength\mylenA{-\the\wd\myboxB}%
    \ifdim\wd\myboxB<\wd\myboxA%
       \rlap{\hskip 0.5\mylenA\usebox\myboxB}{\usebox\myboxA}%
    \else
        \hskip -0.5\mylenA\rlap{\usebox\myboxA}{\hskip 0.5\mylenA\usebox\myboxB}%
    \fi}
\newtheorem{theorem}            {Theorem}[section]
\newtheorem{corollary}          [theorem]{Corollary}
\newtheorem{proposition}        [theorem]{Proposition}
\newtheorem{lemma}              [theorem]{Lemma}
\newtheorem{remark}         [theorem]{Remark}
\newcommand{\C}{\mathbb{C}}
\newcommand{\sW}{\mathscr{W}}
\newcommand{\mfE}{\mathfrak{E}}
\newcommand{\gv}{{\tt{g}}}
\newcommand{\bcdot}{{\boldsymbol{\cdot}}}
\newcommand{\cN}{\mathcal{N}}
\newcommand{\cS}{\mathcal{S}}
\newcommand{\vg}{{\tt{g}}}
\newcommand{\vh}{{\tt{h}}}
\newcommand{\fP}{\mathfrak{P}}
\def\moverlay{\mathpalette\mov@rlay}
\def\mov@rlay#1#2{\leavevmode\vtop{
		\baselineskip\z@skip \lineskiplimit-\maxdimen
		\ialign{\hfil$#1##$\hfil\cr#2\crcr}}}
\newcommand{\CC}{\mathbb{C}}
\newcommand{\TT}{\mathbb{T}}
\newcommand{\RR}{\mathbb{R}}
\newcommand{\fm}{\mathfrak{m}}
\newcommand{\fhm}{\fm}
\newcommand{\cT}{\mathcal{T}}
\newcommand{\fN}{\mathfrak{N}}
\newcommand{\fZ}{\mathfrak{Z}}
\newcommand{\wtvphi}{\widetilde{\varphi}}
\newcommand{\wtb}{\widetilde{b}}
\newcommand{\wtpi}{\widetilde{\pi}}
\newcommand{\fg}{\mathfrak{g}}
\newcommand{\au}{u}
\newcommand{\cF}{\mathcal{F}}
\newcommand{\cG}{\mathcal{F}}
\newcommand{\mfI}{\mathfrak{I}}
\newcommand{\derL}{\mathscr{L}}
\newcommand{\ccG}{\widetilde{\cG}}
\newcommand{\fh}{\mathfrak{h}}
\newcommand{\fH}{\mathfrak{H}}
\newcommand{\fI}{\mathfrak{I}}
\newcommand{\cI}{\mathcal I}
\newcommand{\norml}{normalized }
\newcommand{\Norml}{Normalized }
\newcommand{\AST}{\circ}
\newcommand{\qT}{{\tt{T}}}
\newcommand{\dB}{Q}
\newcommand{\newpsi}{\phi}
\newcommand{\ee}{{\tt{e}}}
\newcommand{\addresseshere}{%
  \enddoc@text\let\enddoc@text\relax
}
\title[Hyper-Reinhardt Spectrahedra]{Automorphisms of Hyper-Reinhardt Free Spectrahedra}
\author[McCullough]{Scott McCullough}
\address{Department of Mathematics\\
  University of Flordia \\ Gainesville, FL}
\email{sam@ufl.edu}
\thanks{Research Supported by NSF grant DMS-1764231}
\subjclass[2010]{47L25, 32H02 (Primary); 52A05, 46L07 (Secondary)}
\keywords{bianalytic map, birational map, free spectrahedron, free analysis}
 \numberwithin{equation}{section}
\begin{document}

\begin{abstract}
 The free automorphisms of a class of Reinhardt free spectrahedra
 are trivial.
\end{abstract}

\maketitle

\section{Introduction}
 \subsection{Prologue} 
 An  overarching  conjecture,
 verified in a rather generic sense, is that  automorphisms
 of free spectrahedra 
 have a very specific transfer function like realization 
  that is highly algebraic in nature.  See, for instance, \cite{bestmaps}.
 A natural class of free spectrahedra not covered by prior results are 
 those with circular symmetry. Here we verify the conjecture
 for those free spectrahedra that satisfy a strong Reinhardt condition. 
 The approach is new.  It hinges on considering the subgroup
 of the automorphism group generated by a given automorphisms
 and those inherent in the Reinhardt condition.
 In the remainder of this introduction we state the main result and then 
 expand somewhat on the context and prior results,
  after
 first introducing the needed background material.

\subsection{Free spectrahedra}
 Fix a positive integer $\vg.$
 For positive integers $n,$ let \df{$M_n(\C)$} denote the set of $n\times n$ matrices
 and \df{$M_n(\C)^{\vg}$} the set of $\vg$-tuples $X=(X_1,\dots,X_{\vg})$ with entries
 $X_j$ from  $M_n(\C).$  Let $M(\C)^{\vg}$ denote the sequence $(M_n(\CC)^{\vg})_n.$

 Given a positive integer $d$ and a tuple $B\in M_d(\CC)^\vg,$  let
  \df{$\Lambda_B(x)$} denote the  linear (matrix) polynomial,
\[
 \Lambda_B(x) =\sum_{j=1}^{\vg} B_j x_j.
\]
  The polynomial $\Lambda_B$ naturally \df{evaluates} at 
 an $X\in M(\C)^{\vg}$ using the (Kronecker) tensor product as
\[
 \Lambda_B(X)= \sum_{j=1}^{\vg} B_j\otimes X_j.
\]
  The tensor product of  an $s\times s$ matrix $S$ and a $t\times t$
 matrix $T$ can also be interpreted as follows. On the algebraic
 tensor product $H=\CC^s\otimes \CC^t,$ the formula
\[
 \langle \sum x_j\otimes g_j, \sum y_k\otimes h_k\rangle
 =\sum_{j,k} \langle x_j,y_k\rangle \, \langle g_j,h_k\rangle
\]
 defines an inner product on $H$ and we view $H$ as the resulting
 ($st$-dimensional) Hilbert space. The tensor product of
 $S$ and $T$ is determined by linearity and
\[
  (S\otimes T) \, (x\otimes h) = Sh \otimes Th.
\]
 From this description of the tensor product it follows
 readily that $(S\otimes T)^*= S^*\otimes T^*.$
 Thus,
\[
 \Lambda_B(X)^* =\Lambda_{B^*}(X^*)=\sum_{j=1}^\vg B_j^*\otimes X_j^*.
\]
 Define $L_B$ by  \index{$L_B(X)$}
\[
 L_B(X) = I_d\otimes I +  \Lambda_B(X)  + \Lambda_B(X)^*.
\]
 From the discussion above, it follows that $L_B(X)$ is
 selfadjoint:  $L_B(X)^* =L_B(X).$ We refer the reader to  \cite{HJ}
 for further information
 about the tensor product of matrices.

For a square matrix $T$ the notation \df{$T\succ 0$} (resp. \df{$T\succeq 0$})
 indicates $T$ is positive definite (resp. positive semidefinite). 
 Let, for positive integers $n,$ \index{$\fP_B[n]$}
\[
 \fP_B[n] =\{X\in M_n(\C)^{\vg}: L_B(X)\succ 0\}.
\]
 The set $\fP_B[1]$ is a \df{spectrahedron} and
 the sequence of sets  $\fP_B =\left (\fP_B[n] \right)_{n=1}^\infty$ \index{$\fP_B$}
 is  a \df{free spectrahedron}. 
 As short hand, 
\[
 \fP_B=\{X\in M(\C)^{\vg} : L_B(X)\succ 0\}. 
\]
 Spectrahedra and free spectrahedra occur in a number of areas
 of mathematics and its applications. For instance, spectrahedra are
 basic objects in semidefinite programming and convex optimization;
 free spectrahedra are connected to operator systems and spaces
 and complete positivity. Spectrahedra, both free and not, 
 arise in certain engineering applications. 
 For further details see the discussion in Subsection~\ref{s:prior}
  and \cite{bestmaps, BPT, convert-to-matin, emerge, WSV, SIG} and the references therein. 

 Free spectrahedra are \df{free sets} in the following sense: (i) if
 $X\in \fP_B[n]$ and $Y\in \fP_B[m],$ then 
\[
X\oplus Y = \left ( X_1\oplus Y_1,\dots, X_\vg \oplus Y_{\vg}\right) \in \fP[n+m],
\]
where
\[
 X_j\oplus Y_j = \begin{pmatrix} X_j & 0 \\ 0 & Y_j \end{pmatrix};
\]
and (ii) if $X\in \fP_B[n]$ and $U\in M_n(\CC)$ is  unitary, then
\[
 U^* X U = \left ( U^* X_1U, \dots, U^* X_\vg U \right)
\]
is in $\fP_B[n].$

\subsection{Free maps}
 A \df{free map} $f:\fP_B\to \fP_C$ between
 free spectrahedra is a sequence $f=(f[n]),$ where \index{$f[n]$}
 $f[n]:\fP_B[n]\to \fP_C[n],$ that satisfies: (i)
 if $X\in \fP_B[n]$ and $Y\in \fP_B[m],$ then
\[
 f[n+m](X\oplus Y) = f[n](X)\oplus f[m](Y); 
\]
 and (ii) 
 if $X\in \fP_B[n]$ and $U$ is an $n\times n$ unitary matrix, then
\[
 f[n](U^* XU)=  U^*f[n](X) U. 
\]
 It is customary to write $f$ in place of $f[n].$

  The free map 
 $f:\fP_B\to\fP_C$ is \df{analytic} if each $f[n]$ is analytic; and $f$ is
  \df{free bianalytic} %
  if $f$ is 
 analytic and has a free analytic inverse,
 $f^{-1}:\fP_C\to \fP_B.$  In the case $C=B,$
 a free bianalytic map is a  \df{free automorphism}.
  We will often just say
 $f$ is bianalytic or an automorphism, dropping the adjective free (and
 also analytic).  
  Motivations for studying free bianalytic
  maps between free spectrahedra
 stem from  connections with matrix inequalities that arise 
 in systems engineering \cite{emerge,convert-to-matin}  
 and by analogy with rigidity theory in several complex variables. 
 Free bianalytic maps can also be viewed as nonlinear
 completely positive maps. We expand upon these 
 themes 
 in Section~\ref{s:prior} below. 

 For a comprehensive treatment of free maps see \cite{KVV}.

\subsection{Hyper-Reinhardt free spectrahedra}
 We now introduce the central object of this paper. Fix
 positive  integers $d_1,\dots,d_{\vg+1}$  
 with $d=\sum d_j$ 
 and  norm one matrices $C_1,\dots,C_{\vg},$ where
 $C_j$ has size $d_j\times d_{j+1}.$  
 Let  $A_j$ denote the $(\vg+1)\times (\vg+1)$ block matrix
 with $(j,j+1)$ entry $C_j$ and all other entries $0.$ Thus
 each $A_j$ is of size $d\times d$ and the $(k,\ell)$
 block of $A_j$ has size $d_{k}\times d_\ell.$  Such a 
 tuple $A$ is now fixed for the remainder of this paper.
 The case $\vg=2$ was treated in the article \cite{MT}.

 Let \df{$\mathbb{T}$} denote the unit circle in the complex
 plane $\CC.$ A domain $D\subseteq \CC^\vg$ is 
 \df{circular} if $x\in D$ and 
 $\gamma\in\mathbb{T}$ implies $\gamma \, x\in D.$
  The domain $D$ is \df{Reinhardt} if it 
 satisfies the stronger condition: if $x\in D$
 and $\gamma=(x_1,\dots,x_\vg)\in \TT^\vg,$ then
\[
 (\gamma_1 x_1,\dots,\gamma_\vg x_\vg)\in D.
\]
 The Reinhardt domain $D$ is complete if $z\in D$
and $w\in \CC^\vg$ and $|w_j|\le |z_j|$
 for each $j,$ then $w\in D.$ Reinhardt domains
 are fundamental in the theory of functions of several
 complex variables, in part because 
 complete Reinhardt domains are the convergence
 domains of power series.  See \cite{first-steps}
 for more on Reinhardt domains.

 The free spectrahedron $\fP_A$  is Reinhardt in the sense that,  
given $\gamma\in \mathbb{T}^{\vg},$ 
 the map $\varphi_\gamma:M(\C)^\vg\to M(\C)^\vg$ given by
\[
 \varphi_\gamma(X)= \gamma\bcdot X
 :=(\gamma_1 X_1,\dots,\gamma_\vg X_\vg)
\]
 evidently restricts to an automorphism of $\fP_A.$
 We refer $\varphi_\gamma$ as a \df{trivial automorphism}
 and, because as explained by Lemma~\ref{l:U's},
 $\fP_A$ satisfies a stronger Reinhardt condition, we 
 call $\fP_A$ a \df{hyper-Reinhardt} free spectrahedron.

 The \df{free polydisc} in $\vg$-variables is the free set
 \[
  \mathscr{F}^{\vg} =\{X\in M(\C)^{\vg}: \|X_j\|<1\}.
\]
 It is easily seen to be a hyper-Reinhardt free spectrahedron.
 Because the $C_j$ have norm one, 
 $\fP_A\subseteq \mathscr{F}^{\vg}.$  The free polydisc
 $\mathscr{F}^\vg$  has
 a rich automorphism group \cite{Pop,MT16}. Properly interpreted,
 it is the same as that of the polydisc $\mathbb{D}^\vg,$
 where \df{$\mathbb{D}$} is the unit disc in the complex plane.

 Given $\mfI \subseteq \{1,2,\dots,\vg\},$ define \index{$\pi_{\mfI}$}
 $\pi_{\mfI}:M(\CC)^{\vg}\to M(\CC)^{\vg}$ by 
\[
 \pi_{\mfI}(T)_j= \begin{cases} T_j & \mbox{ if } j\notin \mfI \\
                              0 &\mbox{ if } j\in \mfI. \end{cases}
\]
 It is immediate that if $T\in \fP_A,$ then $\pi_{\mfI}(T)\in \fP_A.$
 As a definition,   $\fP_A$ \df{contains a polydisc as a
 distinguished summand} if
 there exists $\varnothing \ne \mfI \subseteq \{1,2,\dots,\vg\}$
 such that, for  $T\in M(\C)^{\vg},$ if
 $\|T_j\|<1$ for $j\in \mfI$ and $\pi_{\mfI}(T)\in \fP_A,$
 then $T\in \fP_A.$  If $\fP_A$ contains the polydisc in $\vh\le \vg$ variables
 as a distinguished summand, 
 then the automorphism group of $\fP_A$ contains a copy
 of the automorphism group of $\mathscr{F}^\vh$ and is thus
 rather large.

 Similarly, $\fP_A$ is a \df{coordinate direct sum} if
 there exists a $1\le \nu <\vg$ such that $T\in M(\CC)^{\vg}$
 is an element of $\fP_A$ if and only 
 both $\pi_{\mfI}(T)\in \fP_A$ and $\pi_{\widetilde{\mfI}}(T)\in \fP_A,$
 where $\mfI=\{1,2,\dots,\nu\}$ and $\widetilde{\mfI}$ is the
 complement of $\mfI$ in $\{1,2,\dots,\vg\}.$
 The condition  $\fP_A$ does not contain
 a coordinate direct summand is a natural condition that
 rules out permutations of the variables as an automorphism.

 Theorem~\ref{t:main} below is the main result of this article.

\begin{theorem}\label{t:main}
 Suppose $\varphi$ is an
 automorphism of  $\fP_A.$ %
\begin{enumerate}[(i)]
\item \label{i:main1} If $\fP_A$ 
  does not  contain a polydisc  as a distinguished summand,
 then $\varphi(0)=0;$
\item \label{i:main2} If $\varphi(0)=0,$ then there exists
 a $\gamma\in \mathbb{T}^{\vg}$ and a permutation $\pi$
 of $\{1,2,\dots,\vg\}$ such that 
 $\varphi_j(x)=\gamma_jx_{\pi(j)};$
\item \label{i:main3} If $\varphi(0)=0$ and $\fP_A$ is not 
 a coordinate direct sum,  then  $\varphi$ is trivial.
\end{enumerate}
\end{theorem}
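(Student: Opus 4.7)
The plan is to prove part (ii) first, since it supplies the structural information used for (iii) and (i). The keystone is a free Cartan-type uniqueness theorem: any automorphism of a bounded free domain fixing the origin with derivative equal to the identity there must itself be the identity. Since $\fP_A\subseteq\mathscr{F}^{\vg}$ is bounded, this rigidity applies; it can be established by iterating the automorphism and extracting estimates from the terms of its free power series. Consequently, any automorphism $\varphi$ with $\varphi(0)=0$ is determined by $\varphi'(0)$, and the free-map axioms (respect for direct sums and for unitary conjugation) force $\varphi'(0)$ to have the form $\varphi(X)_j=\sum_k M_{jk}X_k$ for some $M\in M_\vg(\C)$.

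Next, following the approach highlighted in the introduction, I would conjugate the trivial torus by $\varphi$: $\psi_\gamma=\varphi\circ\varphi_\gamma\circ\varphi^{-1}$ is a $\mathbb{T}^{\vg}$-action on $\fP_A$ fixing $0$, implemented linearly by $M\,\mathrm{diag}(\gamma)\,M^{-1}$. Both this conjugated torus and the original diagonal torus sit inside the (compact) group of linear automorphisms of $\fP_A$, and the block structure of $A$ together with the hyper-Reinhardt symmetries supplied by Lemma~\ref{l:U's} should pin down the maximal tori of this group as exactly the diagonal $\mathbb{T}^{\vg}$. A Weyl-group style argument then forces $M$ to normalize the diagonal torus, hence to be \emph{monomial}: a permutation composed with a diagonal unitary. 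This yields $\varphi_j(x)=\gamma_j x_{\pi(j)}$ as in (ii). For (iii), the cycle decomposition of $\pi$ partitions the variables; if $\pi$ has more than one cycle, invariance of $\fP_A$ under $\varphi$ decouples the corresponding blocks of the defining pencil, exhibiting $\fP_A$ as a coordinate direct sum, and this contradicts the hypothesis, so $\pi$ must be a single cycle. A further constraint coming from the explicit $(j,j+1)$-block placement of $C_j$ in $A$ (the blocks $C_j$ and $C_{\pi(j)}$ must match in size and norm along the cycle) then rules out cycles of length greater than one, giving $\pi=\mathrm{id}$ and $\varphi$ trivial.

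Part (i) is the main obstacle and is where the novelty flagged in the introduction truly enters. Assuming $\varphi(0)=p\ne 0$, the conjugate $\psi_\gamma:=\varphi\circ\varphi_\gamma\circ\varphi^{-1}$ is a $\mathbb{T}^{\vg}$-action on $\fP_A$ fixing $p$ rather than $0$. The plan is to study the subgroup of $\operatorname{Aut}(\fP_A)$ generated by $\{\varphi_\gamma\}$ and $\{\psi_\gamma\}$, and to extract from the coexistence of two such tori---with distinct fixed points---a nonempty $\mfI\subseteq\{1,\dots,\vg\}$ for which the distinguished polydisc summand condition holds. The correct $\mfI$ should consist of the coordinates along which the orbit $\gamma\mapsto\varphi_\gamma(p)$ saturates the boundary of $\fP_A$; verifying that $\|X_j\|<1$ for $j\in\mfI$ combined with $\pi_{\mfI}(X)\in\fP_A$ suffices for $X\in\fP_A$ is the heart of the argument and the main technical difficulty I foresee, since it requires identifying a flat boundary face of $\fP_A$ directly from the dynamics of the two-torus action rather than from the algebraic data of $A$.
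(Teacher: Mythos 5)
Your plan for part (ii) diverges from the paper's and has a gap. The paper does not pass through a maximal-torus/Weyl argument; instead it proves Proposition~\ref{l:ismob} directly by evaluating $\varphi$ on nilpotent jets and invoking the Carath\'eodory interpolation lemmas of Section~\ref{s:car}, which show the linear part $\derL$ has exactly one nonzero entry per row and column (Lemma~\ref{l:eeesbegin}); with $\varphi(0)=0$, linearity from \cite[Theorem~4.4]{proper} then kills the higher-order tail. Your route requires knowing that the diagonal torus is a \emph{maximal} torus in the linear automorphism group and that anything normalizing it is monomial; you assert this is forced by the block structure and Lemma~\ref{l:U's}, but this is precisely the hard content — the group of linear automorphisms of $\fP_A$ is not obviously computed, and without a substitute for the Carath\'eodory analysis I don't see how you get it. Your part (iii) also has a logical problem: you claim cycles of length greater than one are ``ruled out'' by the requirement that $C_j$ and $C_{\pi(j)}$ match in size and norm along the cycle, but this constraint is vacuous in many cases (e.g.\ all $C_j$ scalar), and such $\fP_A$ do admit permutation automorphisms; what is actually true (and what Proposition~\ref{p:nopi} proves constructively via Lemmas~\ref{l:nopi+} and \ref{l:nopi-}) is that \emph{any} non-identity permutation automorphism forces $\fP_A$ to be a coordinate direct sum, so the hypothesis that it is not one is what eliminates $\pi\ne\mathrm{id}$, not the block sizes.

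Part (i) is where the real content lies, and your proposal acknowledges it is incomplete. The paper's actual mechanism is quite different and more concrete than the two-torus dynamics you envision: after replacing $\varphi$ by a \norml power $\psi$ (Lemmas~\ref{l:make+}, \ref{p:sumry2}) whose permutation is the identity on the set $\cF$ where $\psi_j(0)>0$, the coordinate Möbius maps $\fhm_{b_j}$ iterate so that $\psi^{(m)}_j(0)\to 1$. The polydisc summand is then extracted not from a boundary-face analysis of a torus orbit but from the auxiliary slice spectrahedra $\fP_A^{\cG}$ and $\mfE_A^{\cG}$ of Subsection~\ref{s:aux}: one shows $\psi$ induces a bianalytic map between them fixing the origin (hence linear), and the surjectivity of $\rho^{\cG}$ combined with Lemma~\ref{l:somesum} and the hyper-Reinhardt averaging (Lemma~\ref{l:KR++}) exhibits the distinguished polydisc. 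The conceptual gap you name — ``identifying a flat boundary face directly from the dynamics'' — is exactly where your approach stalls, and the iteration-to-the-boundary argument together with the decomposition into $\fZ$ and $\fN$ indices and the slice construction is what the paper uses to fill it; none of that machinery appears in your sketch.
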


  The proof of Theorem~\ref{t:main} 
 occupies the body of this paper.

\begin{corollary}
 \label{c:main}
 If $\fP_A$ is neither a polydisc nor a direct sum of free spectrahedra,
 then the automorphisms of $\fP_A$  are trivial.
\end{corollary}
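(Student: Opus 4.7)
The plan is to reduce Corollary~\ref{c:main} to parts (i) and (iii) of Theorem~\ref{t:main}. Let $\varphi$ be an automorphism of $\fP_A$. The only subtlety is that part (i) of the theorem asks for the hypothesis ``$\fP_A$ does not contain a polydisc as a distinguished summand,'' which is \emph{a priori} weaker than the corollary's hypotheses; the first task is therefore to bridge this gap.

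The bridging claim I would establish is: if $\fP_A$ contains a polydisc as a distinguished summand with witness $\varnothing \ne \mfI \subseteq \{1,\dots,\vg\}$, then $\fP_A$ is either equal to $\mathscr{F}^\vg$ or is a coordinate direct sum. The proof is a definition-chase. Since $\fP_A \subseteq \mathscr{F}^\vg$ and $\pi_{\mfI}(T) \in \fP_A$ whenever $T \in \fP_A$, the defining implication upgrades to the equivalence
\[
 T \in \fP_A \iff \pi_{\mfI}(T) \in \fP_A \text{ and } \|T_j\| < 1 \text{ for all } j \in \mfI.
\]
If $\mfI = \{1,\dots,\vg\}$, then $\pi_{\mfI}(T) = 0$ always lies in $\fP_A$ and the equivalence reduces to $\fP_A = \mathscr{F}^\vg$. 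If $\mfI$ is a proper nonempty subset with complement $\widetilde{\mfI}$, applying the displayed equivalence to $\pi_{\widetilde{\mfI}}(T)$ and noting $\pi_{\mfI}(\pi_{\widetilde{\mfI}}(T)) = 0 \in \fP_A$ shows that $\pi_{\widetilde{\mfI}}(T) \in \fP_A$ if and only if $\|T_j\| < 1$ for $j \in \mfI$; substituting back yields
\[
 T \in \fP_A \iff \pi_{\mfI}(T) \in \fP_A \text{ and } \pi_{\widetilde{\mfI}}(T) \in \fP_A,
\]
which, after relabeling variables so that $\mfI = \{1,\dots,\nu\}$, is the coordinate direct sum condition.

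With the claim in hand, the corollary is immediate: the hypothesis that $\fP_A$ is neither a polydisc nor a direct sum of free spectrahedra rules out both alternatives in the conclusion of the claim, so by contraposition $\fP_A$ does not contain a polydisc as a distinguished summand. Theorem~\ref{t:main}(i) then gives $\varphi(0) = 0$, and Theorem~\ref{t:main}(iii), invoking again that $\fP_A$ is not a coordinate direct sum, forces $\varphi$ to be trivial. The only nontrivial step is the bridging claim, and this is a straightforward unpacking of definitions rather than a genuine obstacle; the entire mathematical content of the corollary lies in Theorem~\ref{t:main} itself.
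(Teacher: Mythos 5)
Your proposal is correct and follows the same route as the paper: both reduce to Theorem~\ref{t:main} items (i) and (iii) by first arguing that the hypotheses of the corollary imply $\fP_A$ does not contain a polydisc as a distinguished summand. The paper asserts the bridging implication (``if $\fP_A$ is not a direct sum, then it does not contain a polydisc as a proper distinguished summand'') without proof, whereas you spell out the short definition-chase that justifies it; this is a welcome bit of added detail rather than a different argument.
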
 

 The case of $\vg=2$ of Corollary~\ref{c:main} is one of the 
 principle results in \cite{MT}, though the proof here
 diverges materially from that in \cite{MT}.

\begin{proof}
 If $\fP_A$ is not a direct sum, then it does not contain a polydisc
as a proper distinguished summand.  By assumption $\fP_A$  is not a polydisc.
Hence $\fP_A$ does not contain a polydisc as a distinguished summand
and therefore, by Theorem~\ref{t:main} item~\eqref{i:main1} its automorphisms 
 send $0$ to $0.$  An application of Theorem~\ref{t:main}  item~\eqref{i:main3} now says
 the automorphisms of $\fP_A$ are trivial, since, by assumption, 
 $\fP_A$ does not a coordinate direct sum. 
\end{proof}

\subsection{Context and prior results}
\label{s:prior}
 This section expands upon context and prior results mentioned
 earlier in this introduction.

\subsubsection{Spectraballs, circular symmetry and prior results}
 Given a tuple $E=(E_1,\dots,E_{\vg})$ of $k\times \ell$ 
 (not necessarily square) matrices, let
\[
 B_j =\begin{pmatrix} 0 & E_j \\0& 0\end{pmatrix}.
\]
 The spectrahedron 
 $\fP_B,$ where $B\in M_{k+\ell}(\C)^{\vg}$ is, by definition,
 a \df{spectraball}. It is routine to see that a tuple
 $X\in M(\CC)^{\vg}$ is in $\fP_B$ 
 if and only if  $\|\Lambda_E(X)\|<1.$   
 We mention two special cases. One is
 the free polydisc  $\mathscr{F}^{\vg}$ 
\[
  \mathscr{F}^{\vg} =\{X\in M(\C)^{\vg}: \|X_j\|<1\}
\]
in $\vg$ 
variables introduced earlier. The other is the 
free $\vg\times \vh$ \df{matrix ball},
\[
 \mathscr{M}^{\vg\times \vh} =\{X =(X_{j,k})_{j,k=1}^{\vg,\vh}: X_{j,k}\in M(\C).
 \ \ \|X\|<1\}.
\]
 In each case, the automorphisms at the scalar level - those 
 of $\mathscr{F}^{\vg}[1]$ and $\mathscr{M}^{\vg\times \vh}[1]$ - naturally
 extend to free automorphisms and every free automorphism 
 arises this way.  See for instance \cite{Pop,MT16}.
 In particular,  automorphisms of $\mathscr{F}^{\vg}$ are composites 
 of permutation of the coordinates with M\"obius maps of the unit disc
 in each coordinate.

 More generally, the results of \cite{longmaps,bestmaps} characterize
 bianalytic maps between free spectrahedra $\fP_B$ and $\fP_C$
 under certain generic irreducibility hypotheses on 
 the tuples $B$ and $C.$  They also characterize bianalytic
 maps between spectraballs absent any additional hypotheses.
 A canonical class of free spectrahedra  not covered  by these
 results are those with circular symmetry.

 From \cite{billvolume}, if two circular free spectrahedra $\fP_A$
 and $\fP_B$ are bianalytic, then they 
 are linearly equivalent. Thus, in this case, there is an automorphism
 that sends $0$ to $0.$ In the other direction, and as a version
 of the corresponding classical Caratheodory-Cartan-Kaup-Wu (CCKW)
 Theorem from  several complex variables,
 an automorphism of a circular free spectrahedron
 that sends $0$ to $0$ is a (free) linear map \cite{proper}.
 As a conjecture, an automorphism of 
 a circular free spectrahedron that does not contain 
 a spectraball as a direct summand
 must send $0$ to $0$ and is therefore linear.

\subsubsection{Complete positivity}
 Given $C\in M_d(\CC)^\vg,$ let \index{$\cS_C$}
\[
 \cS_C =\operatorname{span} 
  \{I,C_1,\dots,C_\vg,C_1^*,\dots, C_\vg^*\} \subseteq M_d(\CC).
\]
  In particular,
 $\cS_C$ is an \df{operator system}.
 Given free spectrahedra $\fP_A$ and $\fP_B$ (both in $\vg$
 variables),  a $\vg\times \vg$ matrix $M$ (linear map) induces
 a completely positive map $M:\fP_A\to\fP_B$ if and only if
 $X\in \fP_A$ implies $MX=(Y_1,\dots,Y_\vg)\in \fP_B,$ where
\[
 Y_j=\sum_k M_{j,k}X_k,
\]
 as shown, for instance, in \cite{circular}. Thus $M$
 is a bijection from $\fP_A$ to $\fP_B$ if and only if
 $M$ is completely isometric.   By analogy, free bianalytic maps
 between free spectrahedra can be viewed as non-linear complete
 isometries.

\subsubsection{Automorphism groups and change of variables}
 Of course understanding the automorphism group of a free spectrahedron
 has its own intrinsic interest.   Moreover,
 as described in \cite{annals},  free semialgebraic sets,
 those described by (matrix-valued) polynomial inequalities 
 in the sense of positive semidefiniteness,   appear
 canonically in certain systems engineering problems.
 A related problem is to map, if possible, a general free semialgebraic  set
 $S$ bianalytically to a free specrahedron $\fP_A.$   Another
 free bianaltyic map from $S$ to say $\fP_B$ induces an
 automorphism of $\fP_A.$  Thus understanding the automorphism
 group of $\fP_A$ sheds some light on those free semialgebraic
 sets that are binaltyically equivalent to  the convex set $\fP_A.$

\section{Carath\'eodory Interpolation Preliminaries}
\label{s:car}
 In this section consequence of, and results related to, Carath\'eodory 
 interpolation needed in the sequel are collected for easy reference.
 We begin with the following well known  lemma.

\begin{lemma}
 \label{l:Cintstart}
 Given  $c_0,c_1\in \CC,$ let 
\[
 X=\begin{pmatrix} c_0 & c_1 \\0 &c_0\end{pmatrix}.
\]
 \begin{enumerate}[(i)]
  \item \label{i:c1} $\|X\|=1$ if and only 
  $0=1-|c_0|^2-|c_1|,$ and hence 
 there is a $\theta\in \RR$
 such that $c_1 = e^{i\theta} (|c_0|^2-1);$
 \item \label{i:c2} $\|X\|< 1$ if and only if 
  $0< 1-|c_0|^2-|c_1|.$
\end{enumerate}
 Further, in the first  case, the kernel of $I-XX^*$ is spanned by the vector
\[
 v=\begin{pmatrix} 1\\ -e^{-i\theta} c_0 \end{pmatrix}.
\]
\end{lemma}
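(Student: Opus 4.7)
The plan is direct: since $\|X\|\le 1$ if and only if $I-XX^*\succeq 0$, and $\|X\|<1$ if and only if $I-XX^*\succ 0$, items~(i) and~(ii) reduce to the elementary criterion for positivity of a $2\times 2$ Hermitian matrix --- nonnegative diagonal entries together with the appropriate sign on the determinant.

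A direct computation yields
\[
I-XX^* \;=\; \begin{pmatrix} 1-|c_0|^2-|c_1|^2 & -c_1\bar{c_0} \\ -\bar{c_1}c_0 & 1-|c_0|^2 \end{pmatrix},
\]
whose determinant simplifies to
\[
(1-|c_0|^2)^2-|c_1|^2 \;=\; (1-|c_0|^2-|c_1|)(1-|c_0|^2+|c_1|).
\]
When $|c_0|\le 1$ the second factor is nonnegative, vanishing only in the degenerate case $|c_0|=1$, $c_1=0$ (where the first factor vanishes as well), so the sign of the determinant is dictated by $1-|c_0|^2-|c_1|$. Moreover, whenever $1-|c_0|^2-|c_1|\ge 0$ one has $|c_1|\le 1$, hence $|c_1|^2\le |c_1|$, which forces the $(1,1)$-entry to be nonnegative; the $(2,2)$-entry is already $1-|c_0|^2\ge 0$. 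Items~(i) and~(ii) follow at once, and writing $c_1=e^{i\alpha}|c_1|$ and setting $\theta=\alpha+\pi$ yields the representation $c_1=e^{i\theta}(|c_0|^2-1)$ claimed in~(i), using $|c_1|=1-|c_0|^2$.

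For the kernel assertion, one may assume $|c_0|<1$ (otherwise $c_1=0$ and $X$ is a scalar multiple of the identity). Then the $(2,2)$-entry of $I-XX^*$ is strictly positive, so $I-XX^*$ is nonzero, and since its determinant vanishes the kernel is one-dimensional. It then suffices to verify that $v=(1,-e^{-i\theta}c_0)^\top$ lies in it. Substitution shows the first component of $(I-XX^*)v$ equals $(1-|c_0|^2)^2-|c_1|^2$, which is zero by~(i), while the second reduces to $-c_0(\bar{c_1}+|c_1|e^{-i\theta})$; this vanishes because $\bar{c_1}=e^{-i\theta}(|c_0|^2-1)=-e^{-i\theta}|c_1|$. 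There is no genuine obstacle here --- the whole argument is bookkeeping around the explicit $2\times 2$ matrix $I-XX^*$ --- and the only care point is keeping the phase convention for $\theta$ consistent between the factorization in~(i) and the kernel vector $v$.
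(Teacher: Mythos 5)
Your proof is correct and takes essentially the same route as the paper: reduce to the sign criterion for a $2\times 2$ Hermitian matrix via the determinant of $I-XX^*$. In fact you are a bit more careful than the paper's own proof, which misidentifies the determinant of $I-XX^*$ as $1-|c_0|^2-|c_1|$; you correctly compute it as $(1-|c_0|^2)^2-|c_1|^2=(1-|c_0|^2-|c_1|)(1-|c_0|^2+|c_1|)$ and then explain why the sign is governed by the first factor (since the second is nonnegative, indeed positive, whenever $|c_0|<1$). You also spell out why the $(1,1)$-entry is nonnegative and give the direct kernel computation, which the paper leaves to the reader. Your parenthetical aside that for the kernel claim one should assume $|c_0|<1$ is well-placed: in the degenerate case $|c_0|=1$, $c_1=0$ the matrix $I-XX^*$ vanishes and its kernel is two-dimensional, so the ``spanned by $v$'' statement only makes literal sense when $|c_0|<1$, which is the situation in which the lemma is invoked later (Lemma~\ref{l:evenmoreCint} hypothesizes $|c_0|<1$).
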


\begin{proof}
 The matrix $X$ has norm $1$  if and only if $I-XX^*$
  is positive semidefinite with a kernel. In particular, either
 $|c_0|=1$ and $c_1=0;$ or $|c_0|<1$ and the determinant,
 namely  $1-|c_0|^2-|c_1|,$  of
 $I-XX^*$ is $0$ proving item~\eqref{i:c1}.  Similarly,
 $X$ has norm strictly less than $1$ if and only if  
  $|c_0|<1$ and the determinant of $I-XX^*$ is strictly positive.
   Direct computation
 verifies that $v$ is in the kernel of $I-XX^*$ in the case 
 that $X$ has norm $1.$
\end{proof}

 One view of Carath\'eodory interpolation is as follows.
 Let $\cS$ denote the shift operator on $H^2(\mathbb{D}),$
  the Hilbert-Hardy space
 of the unit disc.
 Given a positive integer $n$ and $c_0,c_1,\dots,c_n\in\CC,$
 does there exist a sequence $c_{n+1},c_{n+2},\dots\in\CC$ such that the 
 operator
\[
 \sum_{j=0}^\infty c_j \cS^j
\]
 has norm at most one. It turns out the answer is yes 
 if and only if the evident necessary condition that, letting
 $\cT^*$ denote the restriction of $\cS^*$ to the span
 of $\{1,z,\dots,z^n\},$ the matrix
\[
 g(\cT)=\sum_{j=0}^n c_j \cT^j
\]
 has norm at most one. In particular,  one can
 construct the $c_{n+j}$ {\it one step at a time}.
 Moreover, in the extreme case when $g(\cT)$
 has norm $1$  there is only one solution of the interpolation 
 problem. Lemma~\ref{l:Cintstart} above is connected with the case $n=1.$
 In Lemma~\ref{l:evenmoreCint} below
 the shift $\cS$ is replaced by a weighted shift.

\begin{lemma}
 \label{l:evenmoreCint}
 Suppose 
\begin{enumerate}[(i)]
 \item  $n\ge 2;$
 \item $c_0,\dots,c_n\in \CC$ and $|c_0|<1;$
 \item $\theta\in \RR$ and $c_1 =  e^{i\theta} (|c_0|^2-1).$
\end{enumerate}

 Let $\lambda_1=1,$  fix $\lambda_2,\dots,\lambda_n\in\CC\setminus\{0\}$
  and let $S=(S_{j,k})_{j,k=1}^{n+1}$ denote the $(n+1)\times (n+1)$ matrix 
  with $S_{j,j+1}=\lambda_j$ for $1\le j\le n$
  and $S_{j,k}=0$ otherwise. Thus the nonzero
 entries of $S$ are exactly on the first super diagonal. 
  Let
\[
 T= \sum_{j=0}^n c_j S^j \, \in \, M_{n+1}(\CC).
\]
 The following conditions are equivalent.
\begin{enumerate}[(i)]
\item the matrix $T$ has norm at most $1;$
\item  $c_j=e^{i\theta} (e^{i\theta}c_0^*)^{j-1} (|c_0|^2-1)$
 and $|\lambda_j|\le 1$ for  $j\ge 1;$ 
 \item $T$ has norm $1.$
\end{enumerate}

 Moreover, in this case, letting $P$ denote the $(n+1)\times (n+1)$
 matrix with $P_{1,n+1}=1$ and $P_{j,k}=0$ otherwise, if 
 $\mu\in \CC$ and $T+\mu P$ is a contraction (has norm at most $1$), then $\mu=0.$
\end{lemma}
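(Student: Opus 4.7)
The implication $(\mathrm{iii}) \Rightarrow (\mathrm{i})$ is trivial. Two structural observations drive the rest. Since $\lambda_1 = 1$, the upper-left $2 \times 2$ principal submatrix of $T$ is exactly $B = \begin{pmatrix} c_0 & c_1 \\ 0 & c_0 \end{pmatrix}$; by the hypothesis on $c_1$ and Lemma \ref{l:Cintstart}, $B$ has norm $1$ and the kernel of $I - BB^*$ is spanned by $v = \begin{pmatrix} 1 \\ -e^{-i\theta} c_0 \end{pmatrix}$. Moreover, the $2 \times 2$ principal submatrix of $T$ on rows and columns $\{j, j+1\}$ equals $\begin{pmatrix} c_0 & c_1 \lambda_j \\ 0 & c_0 \end{pmatrix}$.

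To prove $(\mathrm{i}) \Rightarrow (\mathrm{ii})$, apply Lemma \ref{l:Cintstart}(ii) to the $\{j, j+1\}$-submatrix; since $|c_1| = 1 - |c_0|^2 > 0$, this immediately yields $|\lambda_j| \leq 1$ for every $j$. For the recursion on the $c_j$, block-decompose $T = \begin{pmatrix} B & R \\ 0 & T' \end{pmatrix}$ and apply $T^*$ to the zero-padded vector $w = \begin{pmatrix} v \\ 0 \end{pmatrix}$. Since $\|B^* v\| = \|v\|$, the contraction hypothesis $\|T^* w\| \leq \|w\|$ collapses to $R^* v = 0$. Writing out the $k$-th coordinate of this equation using the explicit entries $R_{1, k} = c_{k+1} \prod_{\ell=1}^{k+1} \lambda_\ell$ and $R_{2, k} = c_k \prod_{\ell=2}^{k+1} \lambda_\ell$, and dividing by the nonzero factor $\prod_{\ell=2}^{k+1} \lambda_\ell$, yields the one-step recursion $c_{k+1} = e^{i\theta} c_0^* \, c_k$ for $1 \leq k \leq n-1$. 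Combined with the given value of $c_1$, this is the closed formula in (ii).

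For $(\mathrm{ii}) \Rightarrow (\mathrm{iii})$, summing the geometric series identifies the $c_j$ as the Taylor coefficients of the M\"obius automorphism
$$\phi(z) \, = \, \frac{c_0 - e^{i\theta} z}{1 - e^{i\theta} c_0^* \, z}$$
of the disc; hence $T = \phi(S)$, with the series terminating because $S$ is nilpotent. The weighted shift $S$ has norm $\|S\| = \max_j |\lambda_j| \leq 1$, so von Neumann's inequality gives $\|T\| = \|\phi(S)\| \leq 1$, while the lower bound $\|T\| \geq \|B\| = 1$ is automatic.

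For the final uniqueness statement, write $T + \mu P = \sum_{j=0}^n c_j' S^j$ with $c_j' = c_j$ for $j < n$ and $c_n' = c_n + \mu \bigl( \prod_{\ell=1}^n \lambda_\ell \bigr)^{-1}$. Since $c_0$ and $c_1$ are unchanged the hypotheses on $T + \mu P$ persist, so applying $(\mathrm{i}) \Rightarrow (\mathrm{ii})$ to $T + \mu P$ forces $c_n' = c_n$ and therefore $\mu = 0$. The main obstacle is the bookkeeping in the $R^* v = 0$ step: the $\lambda$-factors must cancel cleanly to a one-term recursion, which relies essentially on $\lambda_1 = 1$ and the nonvanishing of $\lambda_2, \ldots, \lambda_n$; once this is in hand the M\"obius identification makes the rest routine.
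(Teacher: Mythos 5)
Your proof is correct and follows essentially the same route as the paper's: exploit the norm-one $2\times 2$ corner $B$ and the vector $v$ spanning $\ker(I-BB^*)$, force $R^*v=0$ to get the one-step recursion $c_{k+1}=e^{i\theta}c_0^*c_k$, and obtain the converse by recognizing $T=\phi(S)$ for a disc automorphism $\phi$ and applying von Neumann's inequality. Your treatment of the uniqueness statement (recasting $T+\mu P$ as $\sum c_j'S^j$ with only $c_n'$ perturbed and re-invoking $(\mathrm{i})\Rightarrow(\mathrm{ii})$) is a slightly tidier packaging of the paper's direct re-run of the orthogonality argument on the last column, but it is not a substantively different approach.
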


\begin{proof}
 First note, for each $1\le j \le n,$ that
\begin{equation*}
X_j:= \begin{pmatrix} T_{j,j} & T_{j,j+1}\\ T_{j+1,j} & T_{j+1,j+1} \end{pmatrix}
  = \begin{pmatrix} c_0 & \lambda_j c_1\\0&c_0 \end{pmatrix}.
\end{equation*}
 Observe that $X_1$ 
 is  the matrix $X$ from Lemma~\ref{l:Cintstart} and in 
 particular has norm $1.$  Hence  $T$ has norm
 at least one and is thus a contraction if and only if it
 has norm one. Further, if $T$ is a contraction, then
 each $X_j$ is a contraction and hence, by Lemma~\ref{l:Cintstart},
$|\lambda_j|\le 1.$
 
 Suppose now the norm of $T$ is one and write $T$ as
\[
 T= \begin{pmatrix} X& Y\\ 0 & Z \end{pmatrix},
\]
 where $X=X_1.$  Using Lemma~\ref{l:Cintstart} and its notation,
  since $T$ is a contraction and $\|X^*v\|=\|v\|,$
\[
\|v\|^2 \ge  \| T^* \begin{pmatrix} v\\ 0 \end{pmatrix} \|^2
 = \| \begin{pmatrix} X^*v \\  Y^*v \end{pmatrix}\|^2
 =\|v\|^2 +\|Y^*v\|^2.
\]
 Thus the columns of $Y$ are orthogonal to $v$ and are thus multiples
 of 
\[
 w=\begin{pmatrix} e^{i\theta} c_0^* \\ 1 \end{pmatrix}.
\]
  On the other hand, 
 $y_k,$ the $k$-th column of $Y$ is given by
\[
 y_k = \lambda_1 \cdots \lambda_{k}
  \begin{pmatrix} c_{k} \\ c_{k-1} \end{pmatrix},
\]
 for $1\le k\le n.$  
 Using the assumption that the $\lambda_j$ are non-zero 
 and $y_{k+1}=\mu_{k+1} w$ for some $\mu_{k+1}\in \CC$ it follows
\[
  \begin{pmatrix} c_{k+1} \\ c_{k} \end{pmatrix} 
   = c_{k} \begin{pmatrix} e^{i\theta} c_0^* \\ 1 \end{pmatrix}.
\]
 Thus,  $c_{k+1}=e^{i\theta} c_0^* c_{k}.$
 In particular,  $c_2 = e^{2i\theta}c_0^*(|c_0|^2-1)$
  and by induction,
 $c_k=e^{ik\theta}(c_0^*)^{k-1} (|c_0|^2-1).$ 

 To prove the converse,  suppose $|\lambda_j|\le 1$ and
 $c_j= e^{ij\theta}c_0^{*(j-1)}(|c_0|^2-1).$ 
 In particular, $S$ is a contraction and 
\[
 f(z)=\frac{c_0-e^{i\theta}z}{1-c_0^* e^{i\theta}z}
 = \sum_{j=0}^\infty c_j z^j
\]
  is an automorphism of $\mathbb{D}.$ Hence $T=f(S)$
 is a contraction.

 Finally suppose $T=f(S)$ and $T+ \mu P$ is a contraction.
 The argument above, with $T+\mu P$ in place of $T,$  shows $y_{n} + \mu e$ 
 is orthogonal to the vector  $v$ from Lemma~\ref{l:Cintstart},
 where $e\in\CC^2$ is the vector with first entry $1$ and
 second entry $0.$   
 Since $y_{n}$ is orthogonal to $v,$ it follows that $\mu e$
 is orthogonal to $v.$ Hence
 $\mu=0$ and the proof is complete. 
\end{proof}

\section{The Affine Linear Terms}
Suppose $\varphi:\fP_A\to\fP_A$ is bianalytic,
\[
 \varphi=\begin{pmatrix} \varphi_1 & \cdots & \varphi_{\vg}
  \end{pmatrix}.
\]
 Let $b_j =\varphi_j(0).$ \index{$b_j$}

The coordinate functions $\varphi_j$ of $\varphi$
have power series expansions,
\[
 \varphi_j = b_j + \sum_{k=1}^{\vg} \ell_{j,k} x_k + h_j(x),
\]
 that converge in some neighborhood of $0$ and for 
 all nilpotent tuples, where \df{$h_j(x)$} consists of
 terms of order two and higher.  For notational purposes, let
\begin{equation}
\label{d:derL}
 \derL=\begin{pmatrix} \ell_{j,k} \end{pmatrix}_{j,k=1}^{\vg}.
\end{equation}
 Since $\varphi$ is bianalytic, $\derL$ is invertible. 
 \index{$\derL$}

 For integers $N\ge 2,$ let \df{$\sW_N$} denote the words  in $\{x_1,\dots,x_{\vg}\}$
 of length between $2$ and $N$. 
 Let \df{$\sW^j_N$} denote the words in 
 $\sW_N$ not of the form
 $x_j^n$ (for $2\le n\le N$). Let \df{$\sW^j$} denote the union
(over $N\ge 2$)  of the $\sW_N^j$ and let
 \df{$\fH^j$} denote power series of the form
\begin{equation*}
 \fh(x) = \sum_{\alpha \in \sW^j} a_{\alpha} x^\alpha.
\end{equation*}
  Let $\{\delta_1,\dots,\delta_{\vg}\}$ denote the standard
 orthonormal basis for $\CC^\vg.$ \index{$\delta_k$}
 Given $p\in \mathbb{D},$ let \index{$\fm$}
\begin{equation*}
 \fm_p(z) = \frac{p+z}{1+p^*z}.
\end{equation*}
 Proposition~\ref{l:ismob}  below 
 is the main results of this section.

\begin{proposition} \label{l:ismob}
 There exists  a permutation $\pi$ of $\{1,2,\dots,\vg\}$
 and a tuple $\theta=(\theta_1,\dots,\theta_\vg)\in \RR^\vg$
 such that, for each $1\le j\le \gv,$
\[
 \varphi_j(\delta_{\pi(j)} z)=  \fm_{b_j}(e^{i\theta_j}  z)
 = \frac{b_j+e^{i\theta_j}z}{1+b_j^* e^{i\theta_j}z}
   = e^{i\theta_j} \fm_{e^{-i\theta_j} b_j}(z).
\]
  More generally, with $k=\pi(j),$ there exists an $\fh_k\in \fH^k$ such that
\[
\varphi_j(x) = \fm_{b_j}(e^{i\theta_j}x_{k})
  +\fh_k(x).
\]
 In particular, there exists $a^k_\alpha\in \CC$ such that
  if $T$ is nilpotent of order $N+1,$ then
\begin{equation}
\label{e:vphj}
 \varphi_j(T) = \fm_{b_j}(e^{i\theta_j}T_{k})
 + \fh_k(T)  =  \fm_{b_j}(e^{i\theta_j}  T_{k})
   + \sum_{\alpha\in\sW^k_N} a^k_\alpha T^\alpha.
\end{equation}
\end{proposition}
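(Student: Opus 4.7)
Since $\fP_A\subseteq\mathscr{F}^\vg$, each component $\varphi_j$ sends $\fP_A$ into $\mathbb{D}$ at every level, so restricting to the $k$-th coordinate axis gives an analytic function $g_{j,k}(z):=\varphi_j(\delta_k z)\colon\mathbb{D}\to\mathbb{D}$ with $g_{j,k}(0)=b_j$ and $g_{j,k}'(0)=\ell_{j,k}$. Evaluating $\varphi_j$ on the level-$2$ nilpotent tuple $\delta_k(zE)$ with $E=\bigl(\begin{smallmatrix}0 & 1\\ 0 & 0\end{smallmatrix}\bigr)$ yields $\varphi_j(\delta_k(zE))=\bigl(\begin{smallmatrix}b_j & \ell_{j,k}z\\ 0 & b_j\end{smallmatrix}\bigr)$, which is a strict contraction by the containment in $\mathscr{F}^\vg$; Lemma~\ref{l:Cintstart}(\ref{i:c2}) then produces the pointwise Schwarz--Pick bound $|\ell_{j,k}|\le 1-|b_j|^2$ for every pair $(j,k)$.

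To force $\ell_{j,k}=0$ for all but one index $k=\pi(j)$, I look at the joint level-$2$ tuple $T=(w_1 E,\dots,w_\vg E)$ with $w\in\CC^\vg$. Because the $A_k$ have disjoint block supports, $\Lambda_A(T)=\Lambda_A(w)\otimes E$ is nilpotent of order two with $\|\Lambda_A(T)\|=\max_k|w_k|$, so $T\in\fP_A[2]$ precisely when $w\in\mathbb{D}^\vg$. All monomials of length $\ge 2$ in $\varphi_j$ vanish on $T$ because $EE=0$, leaving $\varphi_j(T)=b_j I+(\derL w)_j E$. A Schur-complement expansion of $L_A(\varphi(T))\succ 0$ then reads
\[
 \bigl\|L_A(b)^{-1/2}\Lambda_A(\derL w)L_A(b)^{-1/2}\bigr\|<1\quad\text{for every }w\in\mathbb{D}^\vg.
\]
Running the same computation for $\varphi^{-1}$ at $b$, with $\mfD:=\derL^{-1}$ in place of $\derL$, yields the reverse implication, and together the two give the norm identity $\|L_A(b)^{-1/2}\Lambda_A(u)L_A(b)^{-1/2}\|=\|\mfD u\|_\infty$ valid on all of $\CC^\vg$. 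The hardest step is to extract from this identity, using the tridiagonal form of $A$ and the hyper-Reinhardt unitaries, that the left-hand side has polyhedral shape $\max_k c_k|u_k|$; once that is known, $\|\mfD u\|_\infty=\max_k c_k|u_k|$ forces $\mfD$ to be a generalized permutation, producing a permutation $\pi$ of $\{1,\dots,\vg\}$ and phases $\theta_j\in\RR$ with $\ell_{j,k}=0$ for $k\ne\pi(j)$ and $\ell_{j,\pi(j)}=e^{i\theta_j}(1-|b_j|^2)$.

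With $\pi$ and $\theta_j$ now identified, the restriction $g_{j,\pi(j)}$ attains Schwarz--Pick equality at $0$, and Lemma~\ref{l:evenmoreCint} pins down every remaining Taylor coefficient. Take the $(N+1)\times(N+1)$ weighted shift $S$ of that lemma; for small $\epsilon>0$ the tuple $\delta_{\pi(j)}((1-\epsilon)S)$ lies in $\fP_A[N+1]$, and $\varphi_j(\delta_{\pi(j)}((1-\epsilon)S))=\sum_{m=0}^N c_m(1-\epsilon)^m S^m$ is a contraction. Letting $\epsilon\to 0$ and using continuity of the terminating series, $\sum_{m=0}^N c_m S^m$ is still a contraction whose top $2\times 2$ block sits at the extremal case of Lemma~\ref{l:Cintstart}. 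Lemma~\ref{l:evenmoreCint} then forces $c_m$ to match the Möbius coefficients for every $m\ge 1$, so the pure-power-of-$x_{\pi(j)}$ part of $\varphi_j$ assembles exactly to $\fm_{b_j}(e^{i\theta_j}x_{\pi(j)})$. Combined with the vanishing of $\ell_{j,k}$ for $k\ne\pi(j)$ from the previous paragraph, the remainder $\fh_{\pi(j)}(x):=\varphi_j(x)-\fm_{b_j}(e^{i\theta_j}x_{\pi(j)})$ has no linear terms and no pure powers of $x_{\pi(j)}$, hence lies in $\fH^{\pi(j)}$. The explicit formula~\eqref{e:vphj} then follows by evaluating the series on any tuple nilpotent of order $N+1$.
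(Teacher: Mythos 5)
Your first and third paragraphs are sound and parallel the paper's own treatment: the Schwarz--Pick bound $|\ell_{j,k}|\le 1-|b_j|^2$ via Lemma~\ref{l:Cintstart}~\eqref{i:c2} is a fine observation, and once $\pi$ and $\theta$ are known, Lemma~\ref{l:evenmoreCint} applied to nilpotent shifts does indeed pin down the pure powers of $x_{\pi(j)}$ to assemble the M\"obius map $\fm_{b_j}(e^{i\theta_j}\cdot)$, which is essentially what the paper does.

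The problem is your second paragraph, which is where all the real work lives and where you explicitly write ``The hardest step is to extract from this identity \dots; once that is known \dots.'' That hardest step is not carried out, and it is precisely the crux of Proposition~\ref{l:ismob}: showing that the derivative $\derL$ is a generalized permutation. Your norm identity $\|L_A(b)^{-1/2}\Lambda_A(u)L_A(b)^{-1/2}\|=\|\mfD u\|_\infty$ is correctly derived, but the claim that the left-hand side has the polyhedral form $\max_k c_k|u_k|$ is neither proved nor obvious: $L_A(b)$ is block-tridiagonal and its inverse square root is a full matrix, so the conjugation genuinely mixes coordinates, and the identity you derived only exhibits the left side as a weighted $\ell^\infty$ norm in the $\mfD u$ coordinates --- which is circular, since $\mfD$ being a generalized permutation is what you are trying to establish. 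The paper instead proves this via a kernel argument tailored to the band structure: Lemma~\ref{l:elementary} (a positivity/kernel fact), Lemma~\ref{l:RandB} (producing a kernel vector $\gamma$ of $R=L_A(\varphi(\qT(\delta_m)))$ also killed by $\dB(s)$), and then the explicit block induction in Lemma~\ref{l:eeesbegin} using the superdiagonal structure of the $A_j$ to force $\gamma=0$ and derive a contradiction, followed by Lemma~\ref{l:eees} which pins down $\ell_{j,\pi(j)}=e^{i\theta_j}(|b_j|^2-1)$ by making a $2\times 2$ compression attain norm one. Until you supply a proof of the polyhedral-shape claim, your argument does not establish the proposition. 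A smaller point: in paragraph three you should either restrict $S$ to the unweighted shift (all weights equal to $1$) or note $|\lambda_j|\le 1$, so that $\delta_{\pi(j)}((1-\epsilon)S)$ actually lies in $\fP_A$.
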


 We refer to $\pi$ as the \df{permutation associated to}
 $\varphi.$

 Before proving Proposition~\ref{l:ismob}, we pause 
 to prove item~\eqref{i:main2}  of Theorem~\ref{t:main},
 stated as Proposition~\ref{p:main2} below.

\begin{proposition}
 \label{p:main2}
  If $\varphi(0)=0,$ then there exists $\gamma\in\mathbb{T}^{\vg}$
 and permutation $\pi$ such that $\varphi_j(x)=\gamma_j x_{\pi(j)}.$
\end{proposition}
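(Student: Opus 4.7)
The plan is to combine Proposition~\ref{l:ismob} specialized at $b_j = 0$ with the free version of the Carath\'eodory--Cartan--Kaup--Wu theorem cited in Subsection~\ref{s:prior}, which asserts that any free automorphism of a circular free spectrahedron that fixes $0$ is a (free) linear map. Proposition~\ref{l:ismob} already pins down what the linear part of $\varphi$ must look like; free linearity of $\varphi$ then kills every higher-order contribution.

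To execute this, I first set $b_j = \varphi_j(0) = 0$ in Proposition~\ref{l:ismob}. The M\"obius factor collapses, since $\fm_0(w) = w$, so with $k = \pi(j)$ the formula \eqref{e:vphj} reads
\[
\varphi_j(x) = e^{i\theta_j} x_{\pi(j)} + \fh_{\pi(j)}(x),
\]
where $\fh_{\pi(j)} \in \fH^{\pi(j)}$ collects only terms of degree at least two. In particular the linear part of $\varphi_j$ is the map $x \mapsto e^{i\theta_j} x_{\pi(j)}$.

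Next I observe that $\fP_A$ is circular: for every $t \in \TT$ the trivial automorphism $\varphi_{(t,\dots,t)}$ is simply the scaling $x \mapsto tx$. Applying the free CCKW theorem \cite{proper} to the automorphism $\varphi$ (which fixes $0$) therefore forces $\varphi$ to be a free linear map, so $\fh_{\pi(j)} \equiv 0$ for every $j$, and consequently $\varphi_j(x) = e^{i\theta_j} x_{\pi(j)}$. Setting $\gamma_j := e^{i\theta_j} \in \TT$ finishes the proof.

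If one prefers a self-contained argument in place of the external CCKW citation, the Cartan rotation trick works in the free setting as well: for $t \in \TT$, the commutator $\mu_t := \varphi_{(t,\dots,t)} \circ \varphi \circ \varphi_{(t,\dots,t)}^{-1} \circ \varphi^{-1}$ is a free automorphism of $\fP_A$ fixing $0$ with $\mu_t'(0) = I$, because the scalar action of $\varphi_{(t,\dots,t)}$ commutes with $\varphi'(0)$. A standard free iteration argument---iterating $\mu_t$ and using the uniform Taylor coefficient bounds that hold because $\fP_A \subseteq \mathscr{F}^{\vg}$ is bounded---yields $\mu_t = \mathrm{id}$. Hence $\varphi(tx) = t\varphi(x)$ for every $t \in \TT$; comparing homogeneous components of the power series of $\varphi$ then forces $\varphi$ to be linear, and the proof is completed by Proposition~\ref{l:ismob} exactly as above. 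The only substantive obstacle in either approach is the free Cartan uniqueness step; everything else is bookkeeping built on Proposition~\ref{l:ismob}.
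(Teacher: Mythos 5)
Your proof is correct and follows essentially the same route as the paper's: specialize Proposition~\ref{l:ismob} at $b=0$ to get $\varphi_j(x) = e^{i\theta_j} x_{\pi(j)} + \fh_{\pi(j)}(x)$, then invoke the free CCKW theorem from \cite{proper} (circular domain, automorphism fixing $0$, hence linear) to kill $\fh_{\pi(j)}$. The alternative Cartan-rotation sketch you append is a nice extra, but it is not part of the paper's argument, which simply cites \cite[Theorem~4.4]{proper}.
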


\begin{proof} 
 Let $\pi$ denote the permutation from Proposition~\ref{p:main2}.
 Since  $\varphi$ is an
 automorphism of a circular domain that maps $0$ to $0$
 it is linear by \cite[Theorem~4.4]{proper}.
 On the other hand, by Proposition~\ref{l:ismob},
 $\varphi_j(x) = e^{i\theta_j}x_k + \fh_k(x),$ 
 where $k=\pi(j).$
 Linearity forces $\fh_k=0$ completing the proof.  
\end{proof}

The proof of Proposition~\ref{l:ismob} is broken down into
several lemmas.

\begin{lemma}
 \label{l:eees}
 There is a permutation $\pi$ of $\{1,\dots,\vg\}$ 
 and a tuple $\theta\in \RR^{\vg}$ such that, 
   for tuples $T=(T_1,\dots,T_{\vg})$ nilpotent of order $2,$
\[
 \varphi_j(T) = b_j + e^{i\theta_j} (|b_j|^2-1)  T_{\pi(j)}
\]
\end{lemma}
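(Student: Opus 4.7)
The plan is to test $\varphi$ on second-order nilpotent tuples, on which the Taylor series of $\varphi$ collapses to its affine part $b+\derL(\cdot)$, and then use the combined constraints coming from $\fP_A\subseteq\mathscr{F}^\vg$ and from the bianalytic inverse $\psi=\varphi^{-1}$. I would let $N=\begin{pmatrix}0&1\\0&0\end{pmatrix}$ and, for $s\in\CC^\vg$, set $T(s)=(s_1N,\dots,s_\vg N)$; every product $T(s)_iT(s)_j$ vanishes. Since the nonzero blocks of the $A_k$ sit on disjoint entries of the first super-diagonal, $A_kA_\ell^*=0=A_k^*A_\ell$ for $k\ne\ell$, hence $\|\Lambda_A(s)\|=\max_k|s_k|$; the identity
\[
L_A(T(s))=\begin{pmatrix}I_d&\Lambda_A(s)\\ \Lambda_A(s)^*&I_d\end{pmatrix}
\]
then shows $T(s)\in\fP_A$ exactly when $s\in\mathbb{D}^\vg$, and the higher-order Taylor terms annihilate $T(s)$, giving $\varphi_j(T(s))=\begin{pmatrix}b_j&(\derL s)_j\\0&b_j\end{pmatrix}$.

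Next, I would exploit $\fP_A\subseteq\mathscr{F}^\vg$ to conclude $\|\varphi_j(T(s))\|<1$, which by Lemma~\ref{l:Cintstart}(ii) gives $|b_j|^2+|(\derL s)_j|<1$. Supremizing over $s\in\overline{\mathbb{D}}^\vg$, aligning the phase of $s_k$ against $\ell_{j,k}$, yields the row bound
\[
\sum_{k=1}^{\vg}|\ell_{j,k}|\le 1-|b_j|^2,\qquad j=1,\dots,\vg.
\]
The same nilpotency argument applied to $\psi$ on the shifted slice $Y=b\otimes I+r\otimes N$ produces $\psi(Y)=(Mr)\otimes N$ with $M=\derL^{-1}$. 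The conditions $Y\in\fP_A$ and $\psi(Y)\in\fP_A$ unpack to $\|L_A(b)^{-1/2}\Lambda_A(r)L_A(b)^{-1/2}\|<1$ and $Mr\in\mathbb{D}^\vg$ respectively, so $\derL$ and $M$ are mutually inverse contractions between $\ell_\infty^\vg$ and the normed space $(\CC^\vg,\|\cdot\|_A)$, with $\|r\|_A:=\|L_A(b)^{-1/2}\Lambda_A(r)L_A(b)^{-1/2}\|$; being inverses, both are linear isometries.

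Finally, I would combine the row bound with the isometry. Hadamard's inequality, followed by the elementary $\|v\|_2\le\|v\|_1$, produces
\[
|\det\derL|\le\prod_{j=1}^{\vg}\|\operatorname{row}_j(\derL)\|_2\le\prod_{j=1}^{\vg}\|\operatorname{row}_j(\derL)\|_1\le\prod_{j=1}^{\vg}(1-|b_j|^2),
\]
while the isometry provides the matching lower bound $|\det\derL|\ge\prod_j(1-|b_j|^2)$ (by comparing $\derL$ with the canonical polydisc isomorphism for the unit ball of $\|\cdot\|_A$, exploiting the hyper-Reinhardt symmetry to put that ball in standard coordinates). Equality throughout forces $\|\operatorname{row}_j(\derL)\|_1=1-|b_j|^2$ and each row to have a single nonzero entry, and invertibility places those entries in distinct columns, defining the permutation $\pi$. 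The phase then comes from Lemma~\ref{l:Cintstart}(i), applied to $\varphi_j(T(\gamma\delta_{\pi(j)}))$ on the boundary $|\gamma|=1$, yielding $\ell_{j,\pi(j)}=e^{i\theta_j}(|b_j|^2-1)$. The hard part is securing the matching lower bound on $|\det\derL|$; it rests on the isometric identification of the unit ball of $\|\cdot\|_A$ with a polydisc and the explicit structure of $L_A(b)$ provided by the hyper-Reinhardt condition.
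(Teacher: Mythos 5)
Your proof takes a genuinely different route from the paper's. The paper establishes the permutation structure of $\derL$ (Lemma~\ref{l:eeesbegin}) by a boundary kernel-vector argument: it finds a nonzero $\gamma$ with $L_A(\varphi(\qT(\delta_m)))\gamma=0$, applies Lemma~\ref{l:elementary} to conclude $\dB(s)\gamma=0$, and propagates zeros through the block components of $\gamma$ using the bidiagonal shape of $A$ (equations~\eqref{e:somezeros} and \eqref{e:morezeros}), reaching the contradiction $\gamma=0$ unless $\derL$ is a generalized permutation; modulus and phase then come from a single boundary evaluation and Lemma~\ref{l:Cintstart}. You instead attempt a global determinant argument.

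Most of your steps are correct and the isometry observation is a nice one. The products $A_k^*A_\ell$ and $A_kA_\ell^*$ do vanish for $k\ne\ell$, so $\Lambda_A(s)^*\Lambda_A(s)=\sum_k|s_k|^2\,A_k^*A_k$, and since each $\|C_k\|=1$ this gives $\|\Lambda_A(s)\|=\max_k|s_k|$, hence $T(s)\in\fP_A$ iff $s\in\mathbb{D}^\vg$. The row bound $\sum_k|\ell_{j,k}|\le 1-|b_j|^2$ then follows from $\fP_A\subseteq\mathscr{F}^\vg$ and Lemma~\ref{l:Cintstart}. Evaluating $\psi=\varphi^{-1}$ on the slice $bI_2+rN$ (whose difference from the scalar point $b$ is jointly nilpotent of order $2$) gives $\psi(bI_2+rN)=T(\derL^{-1}r)$, so $\derL$ and $\derL^{-1}$ are mutually inverse contractions between $\ell_\infty^\vg$ and $(\CC^\vg,\|\cdot\|_A)$, hence isometries; thus $\derL(\mathbb{D}^\vg)$ equals the $\|\cdot\|_A$-ball $B_A$, and Hadamard gives $|\det\derL|\le\prod_j(1-|b_j|^2)$.

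The genuine gap, which you flag yourself, is the matching lower bound $|\det\derL|\ge\prod_j(1-|b_j|^2)$. The isometry alone buys nothing here: it says the volume of $B_A$ is $|\det\derL|^2$ times that of the polydisc, which is a tautology. Since the inclusion $B_A\subseteq\{r:|r_j|<1-|b_j|^2\}$ is exactly what your row bound asserts, the desired lower bound is equivalent to the reverse inclusion, i.e.\ to the claim that $B_A$ is precisely that coordinate polydisc. Your proposed fix via ``the canonical polydisc isomorphism for the unit ball of $\|\cdot\|_A$, exploiting the hyper-Reinhardt symmetry to put that ball in standard coordinates'' does not go through: when $b\ne0$, the hyper-Reinhardt action of Lemma~\ref{l:U's} on $bI_2+rN$ forces the unitaries $W_j$ to be equal and diagonal (an off-diagonal $W_j$, or $W_{j-1}\ne W_j$ at an index with $b_j\ne0$, breaks the form $b_jI_2+r_j'N$), leaving only the circular action $r\mapsto\lambda r$, $\lambda\in\TT$. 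So $B_A$ is circular but not visibly Reinhardt, it is a priori just a skewed ($\derL$-image) polydisc of unknown orientation, and the assertion that it is a coordinate polydisc is essentially a restatement of Lemma~\ref{l:eees}. That structural fact is precisely what the paper's kernel-propagation argument extracts from the bidiagonal form of $A$; without a substitute for it, the Hadamard chain yields only the one-sided inequality.
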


Before proving Lemma~\ref{l:eees}, we first 
 establish Lemma~\ref{l:eeesbegin} below.
 Recall equation~\eqref{d:derL}.

\begin{lemma}
 \label{l:eeesbegin}
  There is a permutation $\pi$ of $\{1,\dots,\vg\}$
 such that $\ell_{j,k}\ne 0$ if and only if  $k=\pi(j).$
 In particular, 
   for tuples $T=(T_1,\dots,T_{\vg})$ nilpotent of order $2,$
\[
 \varphi_j(T) = b_j + \ell_{j,\pi(j)}  T_{\pi(j)}.
\]
  Thus 
 $\derL$ has exactly one nonzero entry in each row and column.
\end{lemma}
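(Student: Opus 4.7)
The plan is to exploit the rigid block super-diagonal structure of $A$—each $A_j$ has only its $(j,j{+}1)$ block $C_j$ non-zero with $\|C_j\|=1$—to first show that $\fP_A$ coincides with the free polydisc in the sense that $\|\Lambda_A(X)\|=\max_k\|X_k\|$, and then to use bianalyticity to derive the same structure for a transformed tuple $\tilde A$ built from $\derL$. The structural rigidity of polydisc spectraballs, accessed via Lemma~\ref{l:Cintstart}, will then force $\derL$ to be monomial.

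The key preliminary is that the non-zero blocks of distinct $A_j$'s occupy disjoint positions, so $A_jA_k^*=0$ and $A_j^*A_k=0$ for $j\ne k$. Consequently $\Lambda_A(X)\Lambda_A(X)^*=\sum_j A_jA_j^*\otimes X_jX_j^*$ is block diagonal and $\|\Lambda_A(X)\|=\max_k\|X_k\|$ for every $X\in M(\CC)^{\vg}$. I would then test $\derL$ against the nilpotent-order-two family $T_k=X_k\otimes N$ with $N=\begin{pmatrix}0&1\\0&0\end{pmatrix}$ and $X_k\in M_n(\CC)$: since the power series of $\varphi_j$ truncates to $\varphi_j(T)=b_jI+\sum_k\ell_{j,k}T_k$ on such tuples, direct Schur-complement computations yield $T\in\fP_A\iff\|\Lambda_A(X)\|<1$ and $\varphi(T)\in\fP_A\iff \|\Lambda_{\tilde A}(X)\|<1$, where
\[
\tilde A_k:=L_A(b)^{-1/2}\Bigl(\sum_j \ell_{j,k}A_j\Bigr)L_A(b)^{-1/2}.
\]
Running the same computation for $\varphi^{-1}$ and combining, bianalyticity forces the identity $\|\Lambda_{\tilde A}(X)\|=\max_k\|X_k\|$ for all $X\in M(\CC)^{\vg}$.

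This identity is the operator-space statement that $(\tilde A_k)$ completely isometrically represents $\ell^{\infty}_{\vg}$, and the rigidity of such representations—which in our setting follows from testing against $X_1=I_n$, $X_2=Y\in M_n(\CC)$ with $\|Y\|\le 1$ and applying the extremal case of Lemma~\ref{l:Cintstart} to a $2\times 2$ compression of $\tilde A_k+\lambda\tilde A_l$ along a norming vector pair of $\tilde A_k$—forces $\tilde A_k\tilde A_l^*=0$ for all $k\ne l$. Unwinding the similarity by $L_A(b)^{\pm 1/2}$, this translates to
\[
\Bigl(\sum_j \ell_{j,k}A_j\Bigr)\,L_A(b)^{-1}\,\Bigl(\sum_{j'}\ell_{j',l}^*A_{j'}^*\Bigr)=0 \qquad(k\ne l),
\]
whose $(j,j)$-block equals $\ell_{j,k}\ell_{j,l}^*\,C_j\bigl(L_A(b)^{-1}\bigr)_{j+1,j+1}C_j^*$. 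Since $\bigl(L_A(b)^{-1}\bigr)_{j+1,j+1}$ is a principal diagonal block of a positive definite matrix and $C_j\ne 0$, this block is a non-zero positive operator; hence $\ell_{j,k}\ell_{j,l}^*=0$ for every $k\ne l$ and each fixed $j$. Thus each row of $\derL$ contains at most one non-zero entry, and invertibility of $\derL$ (inherited from bianalyticity of $\varphi$) upgrades this to exactly one non-zero entry per row and per column; setting $\pi(j)$ to be the unique $k$ with $\ell_{j,k}\ne 0$ yields the permutation, and the stated formula for $\varphi_j$ on nilpotent order-two tuples is then immediate.

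The main obstacle I anticipate is the passage from the polydisc spectraball identity to the orthogonality $\tilde A_k\tilde A_l^*=0$—precisely the rigidity Section~\ref{s:car} is designed to supply. Lemma~\ref{l:Cintstart}, applied to the $2\times 2$ compression whose $(1,1)$-entry captures $\tilde A_k$ acting on its norming vector and whose $(1,2)$-entry carries the off-diagonal coupling from $\tilde A_l$, forces that coupling to vanish for every norming pair, which is equivalent to the stated orthogonality.
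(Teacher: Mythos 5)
Your plan correctly establishes a completely isometric identity for the transformed pencil and the unwinding to the $(j,j)$-block conclusion is sound \emph{if} you had the orthogonality $\tilde A_k\tilde A_l^*=0$. The genuine gap is exactly in the step you flag as the anticipated obstacle: the implication
\[
\|\Lambda_{\tilde A}(X)\|=\max_k\|X_k\|\ \text{ for all }X\quad\Longrightarrow\quad \tilde A_k\tilde A_l^*=0\ \ (k\ne l)
\]
is \emph{false} in general, and the mechanism you cite (Lemma~\ref{l:Cintstart} applied to a $2\times 2$ compression along a norming pair) only yields orthogonality \emph{at norming vectors} of $\tilde A_k$ (and $\tilde A_k^*$), not globally. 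A concrete counterexample: take $\tilde A_1=\operatorname{diag}(1,\tfrac12,0)$ and $\tilde A_2=\operatorname{diag}(0,\tfrac12,1)$. Then
\[
\|\tilde A_1\otimes X_1+\tilde A_2\otimes X_2\|
=\max\bigl(\|X_1\|,\,\|\tfrac12(X_1+X_2)\|,\,\|X_2\|\bigr)
=\max(\|X_1\|,\|X_2\|)
\]
for all matrix tuples $X$ (the middle term is dominated by convexity), yet $\tilde A_1\tilde A_2^*=\operatorname{diag}(0,\tfrac14,0)\ne 0$. So the completely isometric representation of $\ell^\infty_\vg$ alone does not carry the rigidity you need, and Lemma~\ref{l:Cintstart} does not close the gap.

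To salvage the argument you would have to use the specific structure of the $\tilde A_k$'s in this problem (they are $L_A(b)^{-1/2}\bigl(\sum_j\ell_{j,k}A_j\bigr)L_A(b)^{-1/2}$ with $A_j$ strictly block super-diagonal and $L_A(b)$ block tridiagonal), which your proposal does not invoke. This is in fact what the paper does, though in a quite different way: rather than passing through the completely isometric identity, the paper works directly with the kernel of $L_A$ at a boundary point $\varphi(\qT(\delta_m))$, uses the comparison trick of Lemma~\ref{l:elementary} via Lemma~\ref{l:RandB} to show the kernel vector $\gamma$ is also annihilated by the perturbation $\dB(s)$, and then propagates $\gamma=0$ block by block through the tridiagonal structure. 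That propagation is precisely where the super-diagonal form of the $A_j$'s is used, and it is the piece your approach is missing.
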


The proof of Lemma~\ref{l:eeesbegin} in turn uses the following
 elementary linear algebra lemma included here for completeness.

\begin{lemma}
 \label{l:elementary}
  Suppose $R$ and $\dB$ are self-adjoint $n\times n$ matrices and
 $\gamma \in \CC^n.$ If $R\pm \dB\succeq 0$ and $R\gamma =0,$
 then $\dB\gamma=0.$
\end{lemma}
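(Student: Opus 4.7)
The plan is to reduce to the standard fact that for a positive semidefinite matrix $P$ and a vector $\gamma$, the scalar identity $\langle P\gamma,\gamma\rangle=0$ forces $P\gamma=0$ (via $\|P^{1/2}\gamma\|^{2}=\langle P\gamma,\gamma\rangle$). To apply it I first need a positive semidefinite matrix killing $\gamma$ in the sesquilinear sense, which I manufacture from the two given positivities. Namely, write
\[
 R=\tfrac{1}{2}\bigl((R+\dB)+(R-\dB)\bigr),
\]
so that $R\succeq 0$ as the sum of two positive semidefinite matrices. Since $R\gamma=0$, in particular $\langle R\gamma,\gamma\rangle=0$, and therefore
\[
 \langle (R+\dB)\gamma,\gamma\rangle=\langle \dB\gamma,\gamma\rangle,\qquad
 \langle (R-\dB)\gamma,\gamma\rangle=-\langle \dB\gamma,\gamma\rangle.
\]
Both left-hand sides are nonnegative by hypothesis, so $\langle \dB\gamma,\gamma\rangle=0$, and thus $\langle (R\pm \dB)\gamma,\gamma\rangle=0$.

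Now I would invoke the square-root fact on each of the positive semidefinite matrices $R\pm\dB$: from $\|(R\pm\dB)^{1/2}\gamma\|^{2}=0$ I conclude $(R+\dB)\gamma=0$ and $(R-\dB)\gamma=0$. Subtracting these two equations gives $2\dB\gamma=0$, i.e.\ $\dB\gamma=0$, as desired. There is no real obstacle here — the argument is a routine piece of finite-dimensional spectral theory — and the only thing to be careful about is that $R$ alone is not assumed positive a priori, which is exactly why the decomposition $R=\tfrac{1}{2}((R+\dB)+(R-\dB))$ is the first step.
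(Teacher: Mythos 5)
Your proof is correct, but it takes a genuinely different route from the paper's. You reduce to the standard fact that a positive semidefinite $P$ with $\langle P\gamma,\gamma\rangle=0$ satisfies $P\gamma=0$ (via $\|P^{1/2}\gamma\|^{2}=\langle P\gamma,\gamma\rangle$), apply it to $R+\dB$ and $R-\dB$, and subtract. The paper instead runs a polarization/discriminant argument: expanding
\[
0\le\langle(R+\dB)(\gamma+\lambda\delta),\gamma+\lambda\delta\rangle
=|\lambda|^{2}\langle(R+\dB)\delta,\delta\rangle+\lambda\langle\dB\gamma,\delta\rangle+\lambda^{*}\langle\dB\delta,\gamma\rangle
\]
and letting $\lambda\to 0$ along a suitably chosen ray forces $\langle\dB\gamma,\delta\rangle=0$ for every $\delta$, hence $\dB\gamma=0$. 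Your version leans on the existence of the positive semidefinite square root; the paper's version is a self-contained quadratic-form computation that avoids it. Both are elementary and about the same length. One small simplification to your write-up: you do not actually need the decomposition $R=\tfrac12\bigl((R+\dB)+(R-\dB)\bigr)$ or the resulting observation $R\succeq 0$, since $\langle R\gamma,\gamma\rangle=0$ already follows directly from $R\gamma=0$; the positivity of $R\pm\dB$ is then all that is needed to get $\langle(R\pm\dB)\gamma,\gamma\rangle=0$.
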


\begin{proof}
 It is immediate that $\langle \dB\gamma,\gamma \rangle =0.$
 For $\lambda\in \CC$ and $\delta\in \CC^n,$ the inequality
\[
 0\le  \langle (R+\dB)(\gamma +\lambda\delta), \gamma+\lambda \delta\rangle
  = |\lambda|^2 \langle (R+\dB)\delta,\delta\rangle
  +  \lambda \langle \dB\gamma, \delta\rangle +
  \lambda^* \langle \dB\delta,\gamma \rangle
\]
implies $\langle \dB\gamma,\delta\rangle =0$ and hence $\dB\gamma=0.$
\end{proof}

For $t\in\CC^\vg,$ define $\qT=(\qT_1,\dots,\qT_{\vg})=\qT(t)=\qT(t_1,\dots,t_{\vg})$ by
$\qT_j = t_j S,$ where  \index{$\qT(t)$}
\begin{equation}
 \label{d:firstS}
 S=\begin{pmatrix} 0 & 1 \\ 0 & 0 \end{pmatrix}.
\end{equation}

 Recall the definition of $\derL$ from equation~\eqref{d:derL}
 and let $B= I+ \Lambda_A(b) + \Lambda_A(b)^*.$  
 Since  since $\varphi(\qT(t))$ is affine linear in $t,$
\begin{equation}
 \label{e:afflin}
 \varphi(\qT(t))= b+ \qT(\derL t), 
\end{equation}
 it follows that 
\[
 L_A(\varphi(\qT(t))) = B  + \Lambda_A(\qT(\derL t)) +  \Lambda_A(\qT(\derL t))^*,
\]
 and  if
 $s,t\in \CC^{\vg},$ then
\[
 L_A(\varphi(\qT(t+s))) = L_A(\varphi(\qT(t))) + \Lambda_A(\qT(\derL s)) +  \Lambda_A(\qT(\derL s))^*.
\]

\begin{lemma}
 \label{l:RandB}
    If $t\in \CC^{\vg}$ and $|t_j|\le 1$
  for all $j,$ then $\qT(t)$ is in the closure of 
 $\fP_A.$ If in addition, there is an $1\le m\le \vg$  such that
 $|t_m|=1,$ then $\qT(t)$ is in the boundary of $\fP_A.$

 Given $1\le m\le \vg,$  if $s\in \CC^{\vg}$ satisfies
 $s_m=0$ and $|s_j|\le 1$ for all $j,$ then 
 $\varphi(\qT(\delta_m))$ and $\varphi(\qT(\delta_m+s))$ 
 are in the boundary of $\fP_A.$  Equivalently, 
\begin{enumerate}[(a)]
 \item  $R:=L_A(\varphi(\qT(\delta_m)));$ and
 \item $L_A(\varphi(\qT(\delta_m\pm s)))$
\end{enumerate}
 are positive semidefinite with non-trivial kernel.
 In particular,  
 there is a vector $0\ne \gamma$ such that $R\gamma =0.$

 Moreover,
\begin{equation}\label{e:portion}
 L_A(\varphi(\qT(\delta_m\pm s))) = R \pm \dB(s),
\end{equation}
 where
\[
 \dB(s) = \Lambda_A(\qT(\derL s)) + \Lambda_A(\qT(\derL s))^*.
\]

Finally, $\dB(s)\gamma=0.$
\end{lemma}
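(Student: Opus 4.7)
The plan is to prove the parts in the order stated, pivoting on the affine-linear formula \eqref{e:afflin} and a direct computation of $L_A$ on the slice $\{\qT(t):t\in\CC^\vg\}$. For (i) and (ii), since $\qT_j(t)=t_jS$ and $S^2=0$, one has $\Lambda_A(\qT(t))=\Lambda_A(t)\otimes S$, where $\Lambda_A(t)=\sum_j t_jA_j$ is the scalar evaluation. After the natural permutation shuffle exchanging the two tensor factors, $L_A(\qT(t))$ is unitarily equivalent to the $2\times 2$ block matrix
\[
\begin{pmatrix} I_d & \Lambda_A(t) \\ \Lambda_A(t)^* & I_d \end{pmatrix},
\]
which is positive semidefinite if and only if $\|\Lambda_A(t)\|\le 1$. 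The matrix $\Lambda_A(t)$ is block super-bidiagonal with $(j,j+1)$ block $t_jC_j$ and zero elsewhere, so a direct computation on block vectors gives $\|\Lambda_A(t)\|=\max_j |t_j|\,\|C_j\|=\max_j |t_j|$, using $\|C_j\|=1$. Hence $|t_j|\le 1$ for all $j$ places $\qT(t)\in\overline{\fP_A}$, and equality $|t_m|=1$ forces a nontrivial kernel, placing $\qT(t)$ on the boundary.

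For (iii) and (iv), take $t=\delta_m\pm s$ under the lemma's hypotheses; by (ii), $\qT(t)\in\boundary\fP_A$. The right-hand side of \eqref{e:afflin}, namely $b+\qT(\derL t)$, is polynomial in $t$, continuous everywhere, and coincides with $\varphi(\qT(t))$ for $t\in\mathbb{D}^\vg$ (where $\qT(t)\in\fP_A$). Thus $\varphi(\qT(t))\in\overline{\fP_A}$ by continuity. To see that the image is actually on the boundary, I apply the same reasoning to the inverse automorphism $\varphi^{-1}$: since $\derL$ is invertible, an identical nilpotency argument as the one giving \eqref{e:afflin} provides an affine-linear extension $\varphi^{-1}(\qT(s))=\bp+\qT(\derL^\prime s)$ along the slice. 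If $\varphi(\qT(t))$ were interior to $\fP_A$, composing with this extension would place $\qT(t)$ in the interior, contradicting (ii). Consequently $R$ and $L_A(\varphi(\qT(\delta_m\pm s)))$ are positive semidefinite with nontrivial kernel, and any nonzero vector in $\ker R$ supplies the desired $\gamma$.

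Equation \eqref{e:portion} is a direct linearity computation: since $\qT$, $\derL$, and $\Lambda_A$ are all linear in their arguments, $\varphi(\qT(\delta_m\pm s))=\varphi(\qT(\delta_m))\pm\qT(\derL s)$, whence $L_A$ splits as $R\pm\dB(s)$ with $\dB(s)=\Lambda_A(\qT(\derL s))+\Lambda_A(\qT(\derL s))^*$ self-adjoint. Finally, since $R\pm\dB(s)\succeq 0$ and $R\gamma=0$, Lemma~\ref{l:elementary} applied with the pair $(R,\dB(s))$ yields $\dB(s)\gamma=0$. The main obstacle is the boundary-preservation step in (iii)--(iv): free bianalytic maps need not extend continuously to arbitrary boundary points in general, and the cleanest route here exploits the availability of the affine-linear extensions of both $\varphi$ and $\varphi^{-1}$ along the $\qT$-slice, a special feature of this nilpotent test tuple. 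Everything else is a direct calculation.
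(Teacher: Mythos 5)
Your proof is correct and follows essentially the same approach as the paper, with the useful distinction that you flesh out the two steps the paper dismisses as ``evident'' and ``hence.'' The tensor-swap reduction to the $2\times 2$ block $\begin{pmatrix} I_d & \Lambda_A(t) \\ \Lambda_A(t)^* & I_d\end{pmatrix}$ and the observation that $\Lambda_A(t)\Lambda_A(t)^*$ is block-diagonal with $(j,j)$ block $|t_j|^2 C_jC_j^*$ is exactly the right calculation; it cleanly gives $\|\Lambda_A(t)\|=\max_j|t_j|$ and hence the closure/boundary dichotomy.

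One small imprecision worth flagging in the boundary-preservation step: you invoke an affine extension of the form $\varphi^{-1}(\qT(s))=\bp+\qT(\derL'\!s)$, but the point you need to apply $\varphi^{-1}$ to is $\varphi(\qT(t))=b+\qT(\derL t),$ which is \emph{not} of the form $\qT(s)$ when $b\ne 0.$ The correct affine extension to use is the one obtained from the power series of $\varphi^{-1}$ centered at $b$ (valid because $b\in\fP_A[1]$ and $b+\qT(\sigma)$ is a nilpotent perturbation of $b$); this gives $\varphi^{-1}(b+\qT(\sigma))=\qT(\text{affine in }\sigma),$ which agrees with $\qT(t)$ for small $t$ and hence for all $t.$ With that correction the contradiction goes through exactly as you intend: if $\varphi(\qT(\delta_m\pm s))$ were interior, applying the continuous free map $\varphi^{-1}$ would put $\qT(\delta_m\pm s)$ in the interior, contradicting the first part of the lemma. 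Everything else — the split $L_A(\varphi(\qT(\delta_m\pm s)))=R\pm\dB(s)$ by linearity of $\qT,$ $\derL,$ and $\Lambda_A,$ and the final invocation of Lemma~\ref{l:elementary} — is exactly right.
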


\begin{proof}
 The first statements about $\qT(t)$ are evident.

 By the first part of the lemma,
  $\qT(\delta_m)$ and $\qT(\delta_m\pm s)$ 
 are in the boundary of $\fP_A$ and hence 
 so are there images under $\varphi.$

 Proving equation~\eqref{e:portion} is straightforward
 based upon equation~\eqref{e:afflin}.
 The
 last statement follows from equation~\eqref{e:portion}.
 and Lemma~\ref{l:elementary}.
\end{proof}

\begin{proof}[Proof of Lemma~\ref{l:eeesbegin}]
 Recall, $\{\delta_1,\dots,\delta_{\vg}\}$ is the standard
 orthonormal basis for $\CC^{\vg}.$
  Let $H_j$ denote the orthogonal complement of $\delta_j$
 and suppose, for each $1\le k\le \vg,$ there is a 
 $1\le \pi(k) \le \vg$ such that $\derL H_k \subseteq H_{\pi(k)}.$
 Since $\derL$ is invertible, it follows that $\derL H_k = H_{\pi(k)}$
and $\pi(k)\ne \pi(j)$ for $j\ne k.$ In particular,
 $\pi$ is a permutation. Moreover,  for each
 $1\le j\le \vg,$ using $\derL$ is invertible gives
\[
 \derL [\delta_j]= \derL \cap_{k\ne j} H_k = \cap_{k\ne j} \derL H_k
   = \cap_{k\ne j} H_{\pi(k)} = [\delta_{\pi(j)}],
\]
 where $[x]$ denotes the span of a vector $x\in \CC^{\vg}.$
 Thus, for $j$ fixed,  $\ell_{j,k}\ne 0$ if and only if $k=\pi(j)$
  and the conclusion of the lemma  holds. 

 Now suppose there is an $1\le m\le \vg$ such that
 $\derL H_m \not\subseteq  H_k$ for each $k.$  It follows that 
 $\dim [\derL H_m] \cap H_k<\dim H_k=\vg-1$ and consequently
 $\cup_{k=1}^\vg [\derL H_m] \cap H_k \subsetneq \derL H_m.$
 Thus there exists an $s\in H_m\subseteq \CC^{\vg}$ such that each entry 
 of $\derL s$ is non-zero. By scaling $s,$ we assume
 the entries of $s$ have absolute value at most one. 
 Applying Lemma~\ref{l:RandB} (and using its notation),
  fix a (non-zero) vector $\gamma$ in the kernel of $R$
 so that $\dB(s)\gamma=0.$

 Expressing
 $\gamma = \oplus_{j=1}^{\vg+1} \gamma_j$ and
\begin{equation}
\label{d:gj}
 \gamma_j = \begin{pmatrix} \gamma_{j,1}  \\  \gamma_{j,2}\end{pmatrix}
  \in \CC^{d_j}  \oplus  \CC^{d_j},
\end{equation}
 and setting $\sigma =\derL s,$  the condition $\dB(s)\gamma=0$ implies
 $0  = \sigma_j \,   C_j \gamma_{j+1,2}$ and 
 $0  =  \sigma_j^* \,  C_j^*  \gamma_{j,1}$ for 
 $1\le j\le \vg.$ 
 Thus, 
\begin{equation}
\label{e:somezeros}
 C_j^*\gamma_{j,1}=0=C_j\gamma_{j+1,2}, 
\end{equation}
 since $\sigma_j\ne 0.$  In particular,
 $C_{\vg}^* \gamma_{\vg,1}=0.$ 

Since $R\gamma=0,$ 
\[
 b_{\vg}^* C_\vg^* \gamma_{\vg,1} + \gamma_{\vg+1,1} = 0
\]
 and it follows that $\gamma_{\vg+1,1}=0.$ 

 Arguing by induction,
 suppose $\gamma_{\vg+1,1}, \dots, \gamma_{k+1,1}$ are
 all $0$ for some $2\le k\le \vg.$  The equality $R\gamma =0$
 implies
\begin{equation}
 \label{e:morezeros}
 0=  b_{k-1}^* C_{k-1}^* \gamma_{k-1,1} 
  + \gamma_{k,1} + b_k C_k\gamma_{k+1,1}
  + \ell_{k,m} C_k \gamma_{k+1,2}.
\end{equation}
 It follows from equations~\eqref{e:somezeros} 
 and \eqref{e:morezeros} and the 
 induction hypothesis
  that $\gamma_{k,1}=0.$  Hence $\gamma_{k,1}=0$
 for all $2\le  k\le \vg+1.$ Finally, 
 from $\gamma_{1,1}+b_1 C_1\gamma_{2,1} =0$
 it follows that $\gamma_{1,1}=0.$
 A similar argument starting from
\[
 0= \gamma_{1,2} + b_1 C_1\gamma_{2,2} 
\]
 shows $\gamma_{k,2}=0$ for each $1\le k\le \vg+1$ too.
 We have now reached the contradiction $\gamma=0,$ 
 completing the proof.
\end{proof}

\begin{proof}[Proof of Lemma~\ref{l:eees}]
 Let $\pi$ denote the permutation of Lemma~\ref{l:eeesbegin}. Thus,
  for each $1\le j\le \vg$ and
 each tuple $T$ nilpotent of order two,
\[
 \varphi_j(T) = b_j + \ell_{j,\pi(j)} T_{\pi(j)}.
\]

 Fix $1\le m\le \vg$ and let $v=\pi^{-1}(m).$  In particular,
  $\pi(v)=m.$ 
 Let $s\in \CC^{\vg}$ denote the tuple with $s_a=1$
 for $a\ne m$ and $s_m=0.$ Using Lemma~\ref{l:RandB}
 and its notation, there is a non-zero vector $\gamma$
 such that $R\gamma=0$ and $\dB(s)\gamma=0.$ In this case,
\[
 \dB(s) = \sum_{j\ne v} \ell_{j,\pi(j)} A_j \otimes S 
   + \left [ \sum_{j\ne v} \ell_{j,\pi(j)} A_j \otimes S\right ]^*
\]
 where $S$ is given in equation~\eqref{d:firstS}.

 With $\gamma = \oplus_{j=1}^{\vg+1} \gamma_j$ and
 $\gamma_j$ as in equation~\eqref{d:gj} like before,
 the condition $\dB(s)\gamma=0$ 
   (and $\ell_{j,\pi(j)}\ne 0$) implies, 
 for $j\ne v,$  
\begin{equation}
 \label{e:somezeros2}
 C_j^* \gamma_{j,1} =0 =   C_j \gamma_{j+1,2}. 
\end{equation}
 As in the proof of Lemma~\ref{l:eeesbegin}, $R\gamma =0$
 implies $\gamma_{\vg+1,1}+ b_\vg^* C_{\vg}^*\gamma_{g,1}=0$
 and also $\gamma_{1,2}+b_1 C_1 \gamma_{2,2}=0.$
 Assuming $v<\vg,$ it follows that $\gamma_{\vg+1,1}=0.$
 Another application of $R\gamma =0$ gives
\[
 b_{\vg-1}^* C_{\vg-1}^* \gamma_{\vg-1,1}
 + \gamma_{\vg,1} + b_{\vg} C_{\vg} \gamma_{\vg+1,1}
  + \ell_{\vg,\pi(\vg)} C_{\vg} \gamma_{\vg+1,2}  =0.
\]
  Thus, if $v<\vg-1,$ then, 
 in view of equation~\eqref{e:somezeros2} 
 and using the already established $\gamma_{\vg+1,1}=0,$
 it follows that $\gamma_{\vg,1}=0.$ By induction,
 $\gamma_{j,1}=0$ for $v+2\le j.$
A similar argument shows $\gamma_{j,2}=0$ for $j\le v-1.$
On the other hand, $R\gamma=0$ gives
\begin{equation}
\label{e:somezeros3}
 \begin{split}
 0= &    b_{v-1}^* C_{v-1}^* \gamma_{v-1,1} + \gamma_{v,1} +
  b_v C_v \gamma_{v+1,1} + \ell_{v,m} C_{v+1}\gamma_{v+1,2}
\\ 0 = &  \ell_{v-1,m} C_{v-1}^*\gamma_{v-1,1} +
 b_{v-1}^* C^*_{v-1}\gamma_{v-1,2} + \gamma_{v,2} 
 + b_v C_v\gamma_{v+1,2} 
\\ 0= & b_v^* C_v^* \gamma_{v,1} + \gamma_{v+1,1}
  + b_{v+1} C_{v+1} \gamma_{v+2,1}
  + \ell_{v+1,m} C_{v+1} \gamma_{v+2,2}
\\ 0 = &   \ell_{v,m}^* C_v^* \gamma_{v,1} + b_v^* C_v^* \gamma_{v,2}
 + \gamma_{v+1,2} + b_{v+1} C_{v+1}\gamma_{v+2,2}.
\end{split}
\end{equation}
 Combining equations~\eqref{e:somezeros2}, \eqref{e:somezeros3} and $\gamma_{v-1,2}=0=\gamma_{v+2,1},$
  it follows that
\[
 \begin{split}
 0= & \,   \gamma_{v,1} +
   b_v C_v \gamma_{v+1,1} + \ell_{v,m} C_{v}\gamma_{v+1,2}\\
0 = & \,   \gamma_{v,2} + b_v C_v\gamma_{v+1,2} \\
0= & \,  b_v^* C_v^* \gamma_{v,1} + \gamma_{v+1,1} \\
0 = & \,  \ell_{v,m}^* C_v^* \gamma_{v,1} + b_v^* C_v^* \gamma_{v,2}
 + \gamma_{v+1,2}.
\end{split}
\]
 Thus,
\[
0= \begin{pmatrix} I & \varphi_v(T(\delta_m)) \otimes C_v \\ 
   \varphi_v(T(\delta_m))^*\otimes C_v^* & I
  \end{pmatrix}\, \begin{pmatrix} \gamma_{v,1} \\ \gamma_{v,2}  \\
  \gamma_{v+1,1} \\ \gamma_{v+1,2} \end{pmatrix}
\]
 and hence 
\[
 \varphi_v(T(\delta_m)) = \begin{pmatrix} b_v & \ell_{v,m} \\ 0 & b_v \end{pmatrix}
\]
 has norm one or $\gamma_{a,b}=0$ for $a=v,v+1$ and $b=1,2.$ 
 In the second case, $L_A(c I_2)\gamma=0,$
 where $c\in \CC^{\vg}$ is the tuple with $c_j=b_j$ for $j\ne v$
 and $c_v=0.$ Thus $c$ is in the boundary of $\fP_A[1].$
 On the other hand, since $b$ is not in this boundary of $\fP_A[1],$
 it is
 easy to see that neither is $c,$ a contradiction. 
 In the first case, from Lemma~\ref{l:Cintstart},
 $\ell_{v,m} = e^{i\theta_v} (|b_v|^2-1),$ for some
 $\theta_v\in \RR$ and the proof is complete.
\end{proof}

\begin{proof}[Proof of Proposition~\ref{l:ismob}]
 By Lemma~\ref{l:eees}, there
  is a permutation $\pi$ of $\{1,\dots,\vg\}$ such that, 
   for tuples $T=(T_1,\dots,T_{\vg})$ nilpotent of order $2$
 and $1\le j\le\vg,$
\[
 \varphi_j(T) = b_j + e^{i\theta_j} (|b_j|^2-1)  T_{\pi(j)},
\]
 for some $\theta_j\in\RR.$
  Hence, with $k=\pi(j),$ for words $\alpha$ of length
 at least two,
  there exists $a^k_\alpha\in\CC$  such that for a tuple $T$ nilpotent of order 
 $N+1\ge 2,$
\[
 \varphi_j(T) = b_j+ e^{i\theta_j} (|b_j|^2-1) T_{\pi(j)} 
  + \sum_{\alpha\in\sW_N} a^k_\alpha T^\alpha.
\]

 For $N\in\mathbb N^+,$ let $R$ denote the $(N+1)\times (N+1)$
 matrix with $R_{j,j+1}=1$ for $1\le j\le N$ and $R_{j,k}=0$
 otherwise. 
 Let $k=\pi(j).$  The tuple $Y=\delta_k R$ 
 is nilpotent of order $N+1$ and is
 in the boundary of $\fP_A$ and hence so is
 $\varphi(Y).$ In particular, %
\[
 \varphi_j(Y) = b_j + e^{i\theta_j} (|b_j|^2-1) R
   + \sum_{n=2}^N a^k_{x_k^n} R^n.
\]
is a contraction. By Lemma~\ref{l:evenmoreCint}, 
$a^k_{x_k^n} =  e^{i n\theta_j}(b_j^*)^{n-1}(|b_j|^2-1).$
 Hence equation~\eqref{e:vphj} holds for $T$ nilpotent
 of order $N+1$ and the remainder of the proposition follows.
\end{proof}

\section{The Higher Order Terms}
\label{s:higher}
 We continue to work with a fixed automorphism $\varphi$
 of $\fP_A,$ with $b=\varphi(0).$ In particular,
 there exists a permutation $\pi$ and a tuple 
 $\theta\in \RR^{\vg}$ such that the conclusion
 of  Proposition~\ref{l:ismob} holds. Let \index{$\fg$}
\begin{equation}
 \label{d:gm}
   \fg_j(z) = e^{i\theta_j} \fm_{e^{-i\theta_j}b_j}(z) 
    =\fm_{b_j}(e^{i\theta_j}z) = b_j + e^{i\theta_j}(1-|b_j|^2)z  +
      \dots \, .
\end{equation}

 Let $\fZ^+ \subseteq \{1,2,\dots,\vg\}$ \index{$\fZ^+$}
 denote those indices $k$
 such that, for each each $\epsilon>0,$ the matrix
 $L_A(\delta_k+\epsilon \delta_{k+1})$ is not positive semidefinite;
 let $\fZ^-$ denote those indices $k$ \index{$\fZ^-$}
 such that, for each each $\epsilon>0,$ the matrix
 $L_A(\delta_k+\epsilon \delta_{k-1})$ is not positive semidefinite;
  and let $\fZ=\fZ^+\cup \fZ^-.$ \index{$\fZ$}
  Let \df{$\fN$} denote   the complement of $\fZ.$ It is 
  straight forward consequence of convexity that $\fN$ is the set of those
 indices $j$ for which there exists an $\epsilon>0$
 such that 
  $L_A(\epsilon \delta_{k-1} + \delta_k+\epsilon \delta_{k+1}) 
  \succeq 0,$ (in the case that $k=1$ or $k=\vg$ we interpret
  this condition in the natural way).

 The following alternate criteria for membership in the
 sets $\fZ^\pm$ will often be used without comment. 

\begin{lemma}
 \label{l:fZ+}
  For a tuple $T\in M_n(\CC)^\vg,$ if $L_A(T)\succeq 0,$ then
  for each $j,$
\[
  T_j^*T_j\otimes C_j^*C_j + 
     T_{j+1}T_{j+1}^*\otimes C_{j+1} C_{j+1}^* \succeq 0.
\]

   An index $j$ is in $\fZ^+$ if and only if, for each $\epsilon>0,$
\[
 C_j^* C_j + \epsilon C_{j+1}C_{j+1}^* \not\preceq I.
\]
 Likewise $j\in\fZ^-$ if and only if for each $\epsilon>0,$
 $\epsilon C_{j-1}^*C_{j-1} + C_jC_j^* \not \preceq I.$
\end{lemma}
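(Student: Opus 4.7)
The plan is to derive both parts of Lemma~\ref{l:fZ+} from a single Schur complement calculation applied to the block tridiagonal matrix $L_A(T)$. Recall that, with respect to the decomposition $d = d_1 + \cdots + d_{\vg+1}$, the $(k,k)$ diagonal block of $L_A(T)$ is the identity, the $(j, j+1)$ off-diagonal block is $C_j \otimes T_j$, the $(j+1, j)$ block is $C_j^* \otimes T_j^*$, and all remaining blocks vanish.

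For the first assertion, I pass to the principal $3 \times 3$ block submatrix of $L_A(T)$ on rows/columns $j, j+1, j+2$, namely
\[
N \;=\; \begin{pmatrix} I & C_j \otimes T_j & 0 \\ C_j^* \otimes T_j^* & I & C_{j+1} \otimes T_{j+1} \\ 0 & C_{j+1}^* \otimes T_{j+1}^* & I \end{pmatrix}.
\]
Since $L_A(T) \succeq 0$, so is $N$. Taking the Schur complement of $N$ with respect to the (invertible) direct sum of its top-left and bottom-right diagonal blocks produces
\[
I \;-\; C_j^* C_j \otimes T_j^* T_j \;-\; C_{j+1} C_{j+1}^* \otimes T_{j+1} T_{j+1}^* \;\succeq\; 0,
\]
which, after the canonical swap of tensor factors, is the inequality asserted in the lemma.

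For the characterization of $\fZ^+$, I specialize to $T = \delta_j + \epsilon \delta_{j+1}$. The only nonzero off-diagonal blocks of $L_A(T)$ are then $C_j$ at $(j, j+1)$ and $\epsilon C_{j+1}$ at $(j+1, j+2)$, since $T_k = 0$ for $k \notin \{j, j+1\}$. Consequently, $L_A(T)$ decouples orthogonally as the identity on the blocks indexed outside $\{j, j+1, j+2\}$ direct-summed with the $3 \times 3$ block $N$ from above (specialized to $T_j = 1$, $T_{j+1} = \epsilon$). Thus $L_A(T) \succeq 0$ if and only if $N \succeq 0$, which by the same Schur complement step translates into $C_j^* C_j + \epsilon^2 C_{j+1} C_{j+1}^* \preceq I$. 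Negating this and relabeling $\epsilon^2 \to \epsilon$ (both range over $(0, \infty)$) yields the stated criterion for $j \in \fZ^+$. The $\fZ^-$ case is entirely symmetric, applied to $T = \delta_j + \epsilon \delta_{j-1}$.

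I do not anticipate a substantive obstacle: the argument amounts to one standard Schur complement, reused twice. The only thing to check carefully is that, for the tuples $\delta_j + \epsilon \delta_{j \pm 1}$, the off-diagonal blocks of $L_A$ away from the two positions flanking $j+1$ (respectively $j-1$) truly vanish, so that $L_A$ really does decompose into the $3 \times 3$ block plus identity padding, allowing the principal $3 \times 3$ submatrix to govern positivity of the whole.
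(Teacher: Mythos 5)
Your proof is correct and follows essentially the same route as the paper's: pass to the principal $3\times 3$ block of $L_A(T)$ on rows and columns $j$, $j+1$, $j+2$, apply Schur complements, and for the $\fZ^\pm$ criteria specialize to $T=\delta_j+\epsilon\delta_{j\pm 1}$, where $L_A(T)$ is block diagonal with that $3\times 3$ block as the only nontrivial summand. (The paper does the Schur complement in two successive steps, you do it in one against the first-and-third diagonal blocks; and you spell out the decoupling that the paper leaves as an ``Observe'' --- both are cosmetic.) One point worth flagging: as printed, the first conclusion of the lemma reads $T_j^*T_j\otimes C_j^*C_j + T_{j+1}T_{j+1}^*\otimes C_{j+1}C_{j+1}^*\succeq 0$, which is vacuously true; the intended (and proved, both by the paper and by you) inequality is $\preceq I$.
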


The proof uses several times the Schur complement criteria
 for positive definiteness of a $2\times 2$ block selfadjoint matrix
\[
 M=\begin{pmatrix} A& B\\B^* &D \end{pmatrix}.
\]
Namely, if $A$  is positive definite, them $M$ is positive
 semidefinite if and only if 
  the Schur complement of $M$ (with respect
 to the $(1,1)$ block entry),
\[
 S= D-B^* A^{-1} B
\]
is positive semidefinite.

\begin{proof}
 If $L_A(T)\succeq 0,$ then 
\[
  Z=\begin{pmatrix} I & T_j\otimes C_j & 0 \\ T_j^*\otimes C_j^* &
  I & \epsilon  T_{j+1}\otimes C_{j+1}\\
  0 & \epsilon T_{j+1}^*\otimes  C_{j+1}^* & I \end{pmatrix}\succeq  0
\]
if and only if the Schur complement of $Z$ with respect to 
 the $(1,1)$ entry,
\[
  S=\begin{pmatrix} I-T_j^*T_j\otimes C_j^*C_j & T_{j+1}\otimes C_{j+1} \\  
    T_{j+1}^*\otimes  C_{j+1}^* & I \end{pmatrix}
\]
 is positive semidefinite
 if and only if the Schur complement of $S$ with respect
 to its $(2,2)$ entry,
\[
 I-T_j^*T_j\otimes C_j^*C_j -
  T_{j+1}T_{j+1}^*\otimes  C_{j+1}C_{j+1}^* 
\]
 is positive semidefinite. 
Equivalently, 
$T_j^*T_j\otimes C_j^*C_j +
  T_{j+1}T_{j+1}^*\otimes  C_{j+1}C_{j+1}^*\preceq I.$

 Observe  $L_A(\delta_j+\epsilon \delta_{j+1})\succeq 0$ 
 if and only if 
\[
  \begin{pmatrix} I & C_j & 0 \\ C_j^* & I & \epsilon C_{j+1}\\
  0 & \epsilon C_{j+1}^* & I \end{pmatrix}.
\]
 By the argument above it follow that 
 $L_A(\delta_j+\epsilon \delta_{j+1})\succeq 0$ if and only if
$C_j^*C_j+\epsilon^2 C_{j+1}C_{j+1}^*\preceq I.$
\end{proof}

 For notational convenience, let $k=\pi(j).$
  From Proposition~\ref{l:ismob}, there exists $\fh_k\in \fH^j$ such
 that
\[
 \varphi_j(x) = \fg_j(x_k) +\fh_k(x),
\]
 where $\fg_j$ is given in equation~\eqref{d:gm}.

\begin{lemma}
 \label{l:mobius+}
   Fix $1\le j\le \vg$ and let $k=\pi(j).$ 

\begin{enumerate}[(i)]
 \item \label{i:m+1}
  If $k\in \fN,$   then $\fh_k=0.$ 
\item \label{i:m+4}
 In any case, 
 $\fh_k(T)=0$ whenever $T_{k-1}=0=T_{k+1}.$
\end{enumerate}
\end{lemma}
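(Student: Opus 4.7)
The central strategy is to exploit two features of the setup: the block-tridiagonal structure of $L_A$ decouples $L_A(T)$ into three independent blocks when $T_{k-1}=T_{k+1}=0$, namely a ``left'' block involving $T_1,\dots,T_{k-2}$, a ``middle'' $2\times 2$ block depending only on $T_k$ through $C_k\otimes T_k$, and a ``right'' block involving $T_{k+2},\dots,T_\vg$; and the Carath\'eodory rigidity of Möbius transforms of shifts encoded in Lemma~\ref{l:evenmoreCint}.

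For part (ii), the plan is to pick test tuples of the form $T_k=R$, where $R$ is the shift on $\CC^{N+1}$, and $T_{k-1}=T_{k+1}=0$, while for $m\notin\{k-1,k,k+1\}$ we take $T_m$ to be a small scalar multiple of $R^{n_m}$ for various exponents $n_m$. By the block decoupling, such a $T$ lies on the boundary of $\fP_A$---the middle block has non-trivial kernel because $\|C_k\otimes R\|=1$, while the outer blocks remain strictly positive. Since $\varphi$ is an automorphism, $\varphi(T)$ also lies on the boundary, so $L_A(\varphi(T))$ has a non-trivial kernel vector $\gamma$; perturbing by $\pm s$ in the spirit of Lemma~\ref{l:RandB} and invoking Lemma~\ref{l:elementary} yields $\dB(s)\gamma=0$ for the associated linear perturbation.

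Propagating $L_A(\varphi(T))\gamma=0$ block-by-block through the tridiagonal pencil, as was done in the proofs of Lemma~\ref{l:eeesbegin} and Lemma~\ref{l:eees}, we reach the $(j,j+1)$ block $C_j\otimes\varphi_j(T)=C_j\otimes(\fg_j(R)+\fh_k(T))$. The Möbius piece $\fg_j(R)$ already saturates contractivity---a disc automorphism evaluated at a shift has norm one---so the Carath\'eodory rigidity of Lemma~\ref{l:evenmoreCint}, specifically its ``no corner modification'' clause, forces the correction $\fh_k(T)$ to contribute nothing at that block. Varying the $T_m$ and the exponents $n_m$ then extracts coefficient conditions $a^k_\alpha=0$ for each $\alpha\in\sW^k$ that avoids $x_{k-1}$ and $x_{k+1}$, establishing (ii). For part (i), the hypothesis $k\in\fN$ supplies an $\epsilon>0$ for which the test tuples may carry small nonzero $T_{k-1}$ and $T_{k+1}$ entries while keeping $L_A(T)\succeq 0$; repeating the analysis with these enriched tuples kills the remaining coefficients and yields $\fh_k\equiv 0$ as a power series.

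The hard part will be propagating the kernel condition through all $\vg+1$ block rows of $L_A(\varphi(T))$ when the values of $\varphi_i(T)$ for $i\ne j$ are themselves not yet fully determined; our rigidity argument must distill the global positivity information into a clean statement at the $(j,j+1)$ block. The inductive kernel-chasing in the proof of Lemma~\ref{l:eeesbegin} supplies the prototype, but the nonlinear dependence of $\fh_k(T)$ on $T$ makes extracting vanishing of individual coefficients considerably more delicate than in the affine case.
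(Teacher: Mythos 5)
Your overall plan identifies the right endpoint (the rigidity clause of Lemma~\ref{l:evenmoreCint}) and the right test tuples (nilpotent superdiagonal matrices with $T_{k-1}=T_{k+1}=0$ for part~(ii), and with small nonzero $T_{k\pm1}$ entries for part~(i)), but the middle of the argument departs from the paper's proof in a way that introduces a real gap, and also omits an essential ingredient.

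The paper does \emph{not} propagate a kernel vector $\gamma$ of $L_A(\varphi(T))$ through the tridiagonal blocks, and it does not invoke Lemma~\ref{l:RandB} or the $\dB(s)\gamma=0$ mechanism here. That machinery was built specifically for the affine step, where $\varphi(\qT(t))$ is literally affine in $t$ so that $L_A(\varphi(\qT(\delta_m\pm s))) = R\pm\dB(s)$; once $T$ is nilpotent of higher order the dependence of $\varphi(T)$ on the parameters is genuinely nonlinear, the identity $L_A(\varphi(T+s))=L_A(\varphi(T))\pm\dB(s)$ fails, and Lemma~\ref{l:elementary} no longer applies. You flag this yourself as the ``hard part,'' but the point is that it is not merely hard---there is no clean $\pm$-perturbation structure to exploit, so the proposed kernel chase does not get off the ground. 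More importantly, it is also unnecessary. Because the $C_j$ have norm one, $\fP_A\subseteq\mathscr{F}^\vg$, so $\overline{\fP_A}$ sits inside the closed free polydisc and \emph{each coordinate} $\varphi_j(T)$ of any $\varphi(T)\in\overline{\fP_A}$ is automatically a contraction. That single observation replaces the entire block-by-block propagation: once $T(\lambda)$ lies on $\partial\fP_A$ (by Lemma~\ref{l:KRapplied}, or by the block decoupling you describe), $\varphi_j(T(\lambda))$ is a contraction and Lemma~\ref{l:evenmoreCint} can be applied directly. No kernel vector and no knowledge of the other coordinates $\varphi_i(T)$ is needed.

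The second omission is the induction on the order of nilpotence, which is not cosmetic. With the induction hypothesis that every coefficient $a^k_\alpha$ with $|\alpha|<N$ already vanishes, the test tuple $T(\lambda)$ on $\CC^{N+1}$ makes $\varphi_j(T(\lambda))$ exactly equal to $\fg_j(T_k(\lambda))$ plus a sum of words of length exactly $N$, each of which lands in the single $(1,N+1)$ corner entry---so the hypothesis ``$f(S)+\mu P$ is a contraction'' of Lemma~\ref{l:evenmoreCint} is met with $\mu=\sum_{|\alpha|=N}a^k_\alpha\lambda^\alpha$, and the conclusion $\mu=0$ on an open set of $\lambda$ kills those coefficients. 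Without the induction, the lower-order terms of $\fh_k$ contribute \emph{off-corner} entries to $\varphi_j(T(\lambda))$, so it is no longer of the form $f(S)+\mu P$ and the rigidity clause does not apply. ``Varying the $T_m$ and the exponents $n_m$'' to kill all coefficients at once, as you suggest, does not circumvent this: the order-$N$ analysis only sees the corner, and the corner can only be isolated after the lower orders have already been annihilated.
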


Lemma~\ref{l:mobius+} is a preliminary version of 
Proposition~\ref{p:sumry}, the main result of this section.
 Before proving Lemma~\ref{l:mobius+},  
  we collect a couple of lemmas
 related to the hyper-Reinhardt nature of $\fP_A.$

\subsection{The hyper-Reinhardt condition}

Given $T\in M(\C)^{\vg},$ note 
\begin{equation*}
L_A(T) =  \begin{pmatrix} I & C_1\otimes T_1 & 0 & 0 & \dots &0 &0 \\
        C_1^*\otimes T_1^* & I & C_2\otimes T_2 & 0 & \dots &0 & 0 \\
        0 & C_2^*\otimes T_2^*  & I & C_3\otimes T_3 & \dots &0 & 0 \\
       0&0& C_3^*\otimes T_3^*&I&\dots &0&0\\
        \vdots &\vdots&\vdots& \vdots& \dots &  \vdots \\
       0&0&0&0 &\dots & I & C_{\vg}\otimes T_\vg \\
       0&0&0&0 &\dots &C_\vg^* \otimes T_\vg^* &  I 
 \end{pmatrix}. 
\end{equation*}

 Given tuples $T\in M_n(\CC)^{\vg}$ and 
 $W=(W_0,\dots,W_\vg)\in M_n(\CC)^{\vg+1},$ let \index{$W\AST T$}
\begin{equation}
 \label{d:AST}
 W\AST T := (W_0^*T_1 W_1, W_1^*T_2W_2,\dots, W_{\vg-1}^* T_{\vg} W_\vg).
\end{equation}

\begin{lemma}
 \label{l:U's}
   If $T\in \fP_A[n]$ and $W\in  M_n(\C)^{\vg+1}$ is a tuple of 
  unitary matrices, then $W\AST T$ is also in $\fP_A.$
\end{lemma}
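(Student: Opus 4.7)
The plan is to exhibit a block-diagonal unitary $U$ built from the $W_i$ that intertwines $L_A(T)$ with $L_A(W\AST T)$, so that positive semidefiniteness (in fact positive definiteness) transfers automatically.

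More precisely, recall from the displayed expression just before the lemma that $L_A(T)$ is a block $(\vg+1)\times(\vg+1)$ matrix whose $j$-th diagonal block has size $d_j$, whose $(j,j+1)$ block equals $C_j\otimes T_j$ for $1\le j\le \vg$, and whose remaining blocks are $0$. Using the given unitaries $W_0,\dots,W_\vg\in M_n(\CC)$, I would define
\[
 U \;=\; \bigoplus_{j=1}^{\vg+1}\, \bigl(I_{d_j}\otimes W_{j-1}\bigr),
\]
which is unitary on $\CC^{d}\otimes\CC^{n}$ since each summand is. The first step is then to compute $U^*L_A(T)U$ block by block. The diagonal blocks become $(I_{d_j}\otimes W_{j-1})^* \, I \, (I_{d_j}\otimes W_{j-1}) = I$, and the $(j,j+1)$ block becomes
\[
 (I_{d_j}\otimes W_{j-1})^*\,(C_j\otimes T_j)\,(I_{d_{j+1}}\otimes W_j)
 \;=\; C_j\otimes (W_{j-1}^* T_j W_j),
\]
using the standard identity $(A\otimes B)(A'\otimes B')=AA'\otimes BB'$. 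Comparing with the definition \eqref{d:AST} of $W\AST T$, whose $j$-th entry is precisely $W_{j-1}^* T_j W_j$, the result is exactly the linear pencil $L_A(W\AST T)$.

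Hence $L_A(W\AST T)=U^*L_A(T)U$, and since $L_A(T)\succ 0$ and $U$ is unitary the right-hand side is also positive definite, giving $W\AST T\in \fP_A[n]$. There is no real obstacle here; the only care needed is correctly matching the indexing of the $W_i$ with the block rows and columns of $L_A(T)$ so that the conjugation produces $W_{j-1}^* T_j W_j$ rather than a shifted version. This also explains the asymmetric indexing $W_0,\dots,W_\vg$ of the $\vg+1$ unitaries used in the definition of $\AST$, matching the $\vg+1$ diagonal blocks of $L_A$.
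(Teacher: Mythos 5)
Your proof is correct and is essentially the paper's argument: the unitary $U=\bigoplus_{j=1}^{\vg+1}(I_{d_j}\otimes W_{j-1})$ is exactly the block-diagonal matrix $D_W$ the paper uses, and both proofs reduce to the identity $D_W^* L_A(T)D_W = L_A(W\AST T)$. The only difference is that you spell out the block-by-block computation that the paper leaves implicit.
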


\begin{remark}\rm
 A natural question is whether a free spectrahedra
 $\fP_B\subseteq M(\C)^{\vg}$ is hyper-Reinhardt if and only if 
 for each $n,$ each $X\in \fP_B[n]$ and tuple
 $W=(W_0,\dots,W_\vg) \in M_n(\CC)^{\vg+1}$ of unitary matrices
\[
 W\AST X = (W_0^*X_1W_1,\dots, W_{\vg-1}^* X_{\vg}  W_\vg)\in \fP_A. 
\]
\end{remark}

\begin{proof}[Proof of Lemma~\ref{l:U's}]
 Letting $D_W$ denote the block diagonal $(\vg+1)\times (\vg+1)$
with  $(j+1,j+1)$ entry $I\otimes W_j$ for
 $0\le j\le \vg,$ 
\[
 D_W^* L_A(T) D_W = L_A(W\AST T). \qedhere
\]
\end{proof} 

 \begin{lemma}
 \label{l:KR+}
  If $\lambda\in \fP_A[1],$ and  $X\in M_n(\C)^{\vg}$ is a tuple of 
 contractions, then 
 $\lambda \bcdot X =
   (\lambda_1X_1,\dots,\lambda_{\vg} X_{\vg})\in\fP_A[n].$
\end{lemma}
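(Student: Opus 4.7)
The plan is to reduce to the case where each $X_j$ is a unitary matrix via a Sz.-Nagy--style unitary dilation, invoke Lemma~\ref{l:U's}, and then compress back to $\CC^n$.

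To begin, the hypothesis $\lambda\in\fP_A[1]$ simply means that $L_A(\lambda)\succ 0$ as a scalar $d\times d$ matrix. Hence for any positive integer $m$ the inflated constant tuple $T_m:=(\lambda_1 I_m,\dots,\lambda_\vg I_m)$ lies in $\fP_A[m]$, since $L_A(T_m)=L_A(\lambda)\otimes I_m\succ 0$. Next, for each $j$ I would choose a unitary dilation $V_j\in M_{2n}(\CC)$ of the contraction $X_j$, e.g.\ the standard Halmos dilation
\[
 V_j=\begin{pmatrix} X_j & (I-X_jX_j^*)^{1/2} \\ (I-X_j^*X_j)^{1/2} & -X_j^* \end{pmatrix},
\]
so that $PV_jP|_{\CC^n}=X_j$ where $P\in M_{2n}(\CC)$ is the orthogonal projection onto the first copy of $\CC^n$. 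Set $W_0:=I_{2n}$ and $W_j:=V_1V_2\cdots V_j$ for $1\le j\le \vg$. Each $W_j$ is unitary on the common space $\CC^{2n}$, and crucially $W_{j-1}^*W_j=V_j$.

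Applying Lemma~\ref{l:U's} to $T_{2n}\in \fP_A[2n]$ with $W:=(W_0,\dots,W_\vg)$ then gives
\[
 W\AST T_{2n}=(\lambda_1 V_1,\dots,\lambda_\vg V_\vg)\in \fP_A[2n],
\]
i.e.\ $L_A(\lambda\bcdot V)\succ 0$ on $\CC^d\otimes \CC^{2n}$. Conjugating by $I_d\otimes P$ sends each block $A_j\otimes \lambda_j V_j$ to $A_j\otimes \lambda_j X_j$ (and similarly for adjoints), so this compression equals $L_A(\lambda\bcdot X)$ viewed on $\CC^d\otimes\CC^n$. Since the compression of a positive definite operator to any subspace is again positive definite, $L_A(\lambda\bcdot X)\succ 0$, which is the desired conclusion.

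I do not foresee a genuine obstacle. The only minor subtleties are that the Halmos dilation accommodates arbitrary contractions (no assumption $\|X_j\|<1$ is required) and that strict positive definiteness is preserved under compression --- both standard facts. An alternative, avoiding dilation entirely, would be to invoke the Russo--Dye representation of a contraction as an average of unitaries together with convexity of $\fP_A$, but the dilation route dovetails most cleanly with Lemma~\ref{l:U's}.
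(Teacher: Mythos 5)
Your argument is correct and it shares the paper's central idea: both apply Lemma~\ref{l:U's} to the constant tuple $(\lambda_1 I,\dots,\lambda_\vg I)$ with the telescoping choice $W_0=I$, $W_j=U_1\cdots U_j$ (so $W_{j-1}^*W_j=U_j$), yielding $\lambda\bcdot U\in\fP_A$ for every unitary tuple $U$. Where you diverge is in the passage from unitary tuples to arbitrary contraction tuples. The paper writes each $X_j$ as an average of unitaries (Russo--Dye) and then exploits the fact that $L_A$ is affine with constant term $I$, so it preserves convex combinations: $L_A(\lambda\bcdot X)=\frac1n\sum L_A(\lambda\bcdot U^\ell)\succ0$. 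You instead dilate each $X_j$ to a Halmos unitary $V_j\in M_{2n}(\CC)$, run the unitary case in dimension $2n$, and compress by $I_d\otimes P$. Both reductions are standard; the dilation route is a touch cleaner in that it sidesteps any bookkeeping about the Russo--Dye decomposition and makes the preservation of strict positivity transparent (compression of a strictly positive operator to a subspace is strictly positive), whereas the averaging route stays in dimension $n$ and leans on convexity, which is more in the spirit of the rest of the paper. You also correctly flag the Russo--Dye alternative yourself, which is exactly what the paper does.
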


\begin{proof}
 Given a tuple $U\in M_n(\C)^{\vg}$ of unitary matrices,
 let $W_0=I$ and $W_j=U_1 U_2\dots U_j$ and let $\Lambda= \lambda I_n.$
 Observe that $\Lambda\AST W = \lambda\bcdot U.$
 Thus
\[
 L_A(\Lambda \AST W) = L_A(\lambda\bcdot U)
\]
 and consequently,  by Lemma~\ref{l:U's},
   $\lambda\bcdot U \in \fP_A.$
 
 By the Russo-Dye Theorem, there is an $n$ and 
 unitary tuples $U^{j}\in M_n(\C)^{\vg}$ for $1\le j\le n$ such that
\[
 X = \frac{1}{n} \sum_{\ell=1}^n U^\ell, \ \ 
 X_j=\frac1n \sum_{\ell=1}^n U_j^\ell.
\] 
 Thus
\[
 L_A(\lambda\bcdot X) = \frac{1}{n} \sum_{j=1}^n L_A(\lambda\bcdot U^j)
 \succ0
\]
 and it follows that $\lambda\bcdot X\in \fP_A.$
\end{proof}

We will also need the following variation of Lemma~\ref{l:KR+}.

\begin{lemma}
 \label{l:KR++}
   Fix $\cI \subseteq \{1,2,\dots,\vg\}$ and 
 $\lambda_j\ge 0$ for $j\in \cI.$
  If $Y\in \fP_A[n]$ and $Y_j=0$ for $j\in \cI$  implies $Y^\lambda$
 defined by 
\[
   Y^\lambda_j = \begin{cases} \lambda_j I_n & \mbox{ if } j\in \cI \\
                              Y_j & \mbox{ if } j\notin \cI,
 \end{cases}
\]
 is in $\fP_A,$ 
 then, if $Y\in \fP_A$ and $Y_j=0$ for $j\in \cI$ and $\|T_j\|\le 1$
 for $j\in \cI,$   then the tuple
\[
 Z = \begin{cases} \lambda_j T_j & \mbox{ if } j\in \cI \\
                              Y_j & \mbox{ if } j\notin \cI
 \end{cases}
\]
 is in $\fP_A.$
\end{lemma}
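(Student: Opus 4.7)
The plan is to parallel the proof of Lemma~\ref{l:KR+} by combining a Russo--Dye reduction with the hyper-Reinhardt invariance from Lemma~\ref{l:U's}. First I would use Russo--Dye to express each contraction $T_j$ ($j \in \cI$) as an average of finitely many unitaries $U_j^{(\ell)}$; by convexity of $\fP_A$ and the affinity of the tuple $Z$ in each $T_j$, this reduces the problem to showing that, for any choice of unitaries $U = (U_j)_{j\in\cI}$, the tuple $Z^U$ defined by $Z^U_j = \lambda_j U_j$ for $j\in\cI$ and $Z^U_j = Y_j$ for $j\notin\cI$ lies in $\fP_A$.

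Second, starting from $Y^\lambda \in \fP_A$ (available by hypothesis), I would apply Lemma~\ref{l:U's} with a cleverly chosen unitary tuple $W_0,\dots,W_\vg$. The most natural attempt---telescoping via $W_0 = I$, $W_j = W_{j-1}U_j$ for $j\in\cI$ and $W_j = W_{j-1}$ for $j\notin\cI$---produces $(W\AST Y^\lambda)_j = \lambda_j U_j$ at $\cI$-positions as desired, but replaces $Y_j$ by the unitary conjugate $W_{j-1}^* Y_j W_{j-1}$ at non-$\cI$ positions. To overcome this interference, I would work in the inflated space $\CC^{2n}$: apply the hypothesis to $Y \oplus 0 \in \fP_A[2n]$ to obtain $(Y\oplus 0)^\lambda \in \fP_A[2n]$, replace each $U_j$ by a Halmos unitary dilation $\tilde U_j \in M_{2n}(\CC)$ (so that $U_j$ is the upper-left block of $\tilde U_j$), and construct $W_j \in M_{2n}(\CC)$ so that $W_{j-1}^*W_j = \tilde U_j$ at $\cI$-positions while $W_{j-1}^*(Y_j\oplus 0)W_j = Y_j\oplus 0$ at non-$\cI$ positions. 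Lemma~\ref{l:U's} then places the resulting tuple in $\fP_A[2n]$. Finally, isometric compression along the isometry $V: \CC^n \to \CC^{2n}$, $v\mapsto (v,0)$, preserves $\fP_A$ because $L_A(V^*XV) = (I\otimes V^*)L_A(X)(I\otimes V)$ inherits positivity, and this compression sends the inflated tuple to $Z^U$, completing the argument.

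The main obstacle is the construction of unitaries $W_j \in M_{2n}(\CC)$ satisfying both the telescoping condition at $\cI$-positions and the exact preservation of $Y_j \oplus 0$ at non-$\cI$ positions. These requirements pull in opposite directions: preservation of $Y_j\oplus 0$ is naturally accomplished by $W_j$ of the form $I_n \oplus W_j'$, whereas the Halmos dilations $\tilde U_j$ exhibit nontrivial off-diagonal coupling between the two copies of $\CC^n$ unless $U_j = I$. Resolving this tension relies on the block tridiagonal structure of $L_A$, which means that only adjacent pairs $(W_{j-1},W_j)$ enter each local constraint, so the unitaries can in principle be constructed inductively along the chain of indices; making this inductive construction rigorous, rather than the routine Russo--Dye and compression steps that bracket it, is where the real work of the proof lies.
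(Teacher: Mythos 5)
There is a genuine gap, and you have actually put your finger on exactly where it is: your proposal correctly identifies the ``interference'' problem (applying $W\AST$ directly to $Y^\lambda$ disturbs the non-$\cI$ coordinates), but the Halmos-dilation workaround you sketch is both overcomplicated and, as you concede yourself, not actually carried out (``making this inductive construction rigorous $\dots$ is where the real work of the proof lies''). A proof proposal cannot leave the acknowledged crux as a to-do item, so this does not close the argument. Moreover it is unclear the dilation approach can even succeed as formulated, since you want $W_{j-1}^*W_j$ to equal a fixed dilation $\tilde U_j$ at $\cI$-positions while also forcing exact preservation at non-$\cI$ positions; these constraints are not obviously compatible along the chain.

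The paper avoids the dilation entirely with a short ``sandwich'' trick that you missed: do not apply $W\AST$ to $Y^\lambda$. Instead, start from $Y$, form $W^*\AST Y$ (still in $\fP_A$ by Lemma~\ref{l:U's}, and still zero at every $\cI$-position since $W_{j-1}\cdot 0 \cdot W_j^* = 0$), apply the $\lambda$-hypothesis to \emph{this} tuple to get $(W^*\AST Y)^\lambda \in \fP_A$, and only then apply $W\AST$. The non-$\cI$ coordinates are conjugated and then un-conjugated, so they come back to $Y_j$ on the nose, while the $\cI$-coordinates become $\lambda_j\,W_{j-1}^*W_j$, where $W_{j-1}^*W_j$ ranges over all unitaries as $W$ varies. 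No inflation to $\CC^{2n}$, no dilation, no compression. The final Russo--Dye averaging step in your proposal is then exactly right and matches the paper. So the overall architecture (hyper-Reinhardt invariance plus Russo--Dye) is the same; what is missing is the order-of-operations observation that conjugating \emph{before} invoking the hypothesis makes the non-$\cI$ coordinates self-correcting.
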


\begin{proof}
 Given a tuple $W=(W_0,W_1,\dots,W_\vg)$  of $n\times n$ unitary matrices,
 the tuple $W^*\AST Y\in \fP_A$  and $(W^*\AST Y)_j=0$
 for $j\in \cI.$  Hence the tuple $(W^*\AST Y)^\lambda$
 is in $\fP_A$ and therefore so is
\[
  W\AST (W^*\AST Y)^\lambda
   = \begin{cases}  \lambda_j W_j^* W_{j+1} & \mbox{ if } j\in \cI \\
                          Y_j & \mbox{ if } j\notin\cI.\end{cases}
\]
  An application of the Russo-Dye lemma as in the proof
 of Lemma~\ref{l:U's} completes the proof.
\end{proof}

\begin{lemma}
 \label{l:KRapplied}
   Fix $1\le k\le \vg.$ If there is an $\epsilon>0$ such
 that both $L_A(\epsilon \delta_{k-1} + \delta_k) \succeq 0$
 and  $L_A(\delta_k+\epsilon \delta_{k+1}) 
  \succeq 0,$ then there is an $\eta>0$ such that 
 if $T\in M_n(\C)^\vg$ satisfies $\|T_k\|\le 1$
 and $\|T_j\|\le \eta$ for $j\ne k,$ then $T$ is in 
  the closure  of $\fP_A[n].$ If, in addition, $\|T_k\|=1,$
 then $T$ is in the boundary of $\fP_A[n].$ %

 Similarly, if just $L_A(\delta_k +\epsilon \delta_{k+1})\succeq0,$
 then there is an $\eta>0$ such that if 
 $\|T_k\|\le 1,$ $T_{k-1}=0$ and $\|T_j\|<\eta$ for 
 $j\ne k,$ then $T$ is in the closure of $\fP_A[n]$.
\end{lemma}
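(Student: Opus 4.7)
The plan is to reduce to a scalar-level statement via Lemma~\ref{l:KR+}. Specifically, I will produce a scalar tuple $\lambda\in\overline{\fP_A[1]}$ with $\lambda_k=1$ and $\lambda_j=\eta$ for $j\ne k$; then given $T$ with $\|T_k\|\le 1$ and $\|T_j\|\le\eta$ for $j\ne k$, the tuple $X$ with $X_k=T_k$ and $X_j=\eta^{-1}T_j$ consists of contractions and satisfies $\lambda\bcdot X=T$. The closure version of Lemma~\ref{l:KR+} (obtained by applying the original to $\alpha\lambda\in\fP_A[1]$ for $\alpha\in[0,1)$ and letting $\alpha\to 1$, using convexity and $0\in\fP_A[1]$) then gives $T\in\overline{\fP_A[n]}$.

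Take $\lambda=\delta_k+\eta(\mathbf{1}-\delta_k)$ and write $L_A(\lambda)=L_A(\delta_k)+\eta V$ with $V=\sum_{j\ne k}(A_j+A_j^*)$. The block-tridiagonal matrix $L_A(\delta_k)$ is positive semidefinite; its kernel $K$ is spanned by vectors $v$ supported only in the $k$-th and $(k+1)$-st block coordinates, with $(v_k,v_{k+1})=(-C_ku,u)$ for $u\in\ker(I-C_k^*C_k)$. The crucial step is that $Vv=0$ for every such $v$: by the support of $v$ and the sparse structure of each $A_j$, the only blocks of $Vv$ that can be nonzero are at positions $k-1$ and $k+2$, where they equal $-C_{k-1}C_ku$ and $C_{k+1}^*u$ respectively. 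Both vanish under the hypotheses: Schur-complementing the coupled $3\times 3$ sub-block of $L_A(\epsilon\delta_{k-1}+\delta_k)$ shows the first hypothesis is equivalent to $\epsilon^2C_{k-1}^*C_{k-1}+C_kC_k^*\preceq I$; applied to $w:=C_ku$, which satisfies $C_kC_k^*w=w$ and $\|w\|=\|u\|$, this forces $C_{k-1}w=0$. Symmetrically, $L_A(\delta_k+\epsilon\delta_{k+1})\succeq 0$ becomes $C_k^*C_k+\epsilon^2C_{k+1}C_{k+1}^*\preceq I$ and yields $C_{k+1}^*u=0$.

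Once $Vv=0$ on $K$, self-adjointness of $V$ gives $VK^\perp\subseteq K^\perp$, and in the decomposition along $K\oplus K^\perp$, $V=0\oplus V|_{K^\perp}$ while $L_A(\delta_k)=0\oplus L_A(\delta_k)|_{K^\perp}$. The restriction $L_A(\delta_k)|_{K^\perp}$ is strictly positive, so for $\eta$ smaller than its smallest eigenvalue divided by $\|V\|$, $L_A(\lambda)$ remains positive semidefinite and $\lambda\in\overline{\fP_A[1]}$. This yields the first assertion. The boundary claim when $\|T_k\|=1$ is then immediate: $\fP_A\subseteq\mathscr{F}^\vg=\{X:\|X_j\|<1\}$, so $\|T_k\|=1$ forces $T\notin\fP_A[n]$, and combined with $T\in\overline{\fP_A[n]}$ this gives $T\in\partial\fP_A[n]$.

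For the one-sided second statement, take $\lambda=\delta_k+\eta(\mathbf{1}-\delta_k-\delta_{k-1})$ and $V'=\sum_{j\notin\{k-1,k\}}(A_j+A_j^*)$: now the obstruction $(V'v)_{k-1}=-C_{k-1}C_ku$ is absent (the $A_{k-1}$ term has been dropped), so only the remaining hypothesis $L_A(\delta_k+\epsilon\delta_{k+1})\succeq 0$ is needed to annihilate $(V'v)_{k+2}=C_{k+1}^*u$. The factorization argument is identical, taking $X_{k-1}=0$ so that $T_{k-1}=\lambda_{k-1}X_{k-1}=0$. The main obstacle in the whole proof is the kernel computation: correctly identifying the form of $v\in\ker L_A(\delta_k)$ within the block-tridiagonal structure and extracting from the Schur complement form of the scalar hypotheses the precise orthogonality conditions $C_{k-1}(C_ku)=0$ and $C_{k+1}^*u=0$ that annihilate $Vv$.
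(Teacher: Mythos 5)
Your argument is correct, but it proceeds by a genuinely different route than the paper's. The paper first observes (without detailed justification) that $L_A(\delta_k + \sigma\sum_{j\ne k,k\pm 1}\delta_j)\succeq 0$ for small $\sigma>0$, then averages this with the two hypotheses $L_A(\delta_k+\epsilon\delta_{k+1})\succeq 0$ and $L_A(\epsilon\delta_{k-1}+\delta_k)\succeq 0$ using convexity of $\overline{\fP_A[1]}$ and linearity of $L_A$ in its argument, arriving directly at a scalar tuple $\delta_k+\eta\sum_{j\ne k}\delta_j$ in $\overline{\fP_A[1]}$; it then invokes Lemma~\ref{l:KR+} exactly as you do. Your proof replaces the convexity step with a direct spectral perturbation analysis of $L_A(\delta_k)+\eta V$: you compute the kernel $K$ of $L_A(\delta_k)$ (correctly identifying it as supported on blocks $k,k+1$, spanned by $(-C_ku,u)$ with $u\in\ker(I-C_k^*C_k)$), show that $V$ annihilates $K$ by extracting from the hypotheses the identities $C_{k-1}(C_ku)=0$ and $C_{k+1}^*u=0$, and then use $V(K^\perp)\subseteq K^\perp$ together with strict positivity of $L_A(\delta_k)|_{K^\perp}$ to conclude for small $\eta$. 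This is longer, but it makes fully explicit why the two hypotheses are precisely what is needed (they kill the only two nonzero blocks of $Vv$ for $v\in K$), and it supplies the perturbation argument that the paper's proof leaves implicit behind its ``Observe that'' for the $\sigma$-step. You also correctly flag and repair a small imprecision: since Lemma~\ref{l:KR+} is stated for $\lambda\in\fP_A[1]$ rather than its closure, one should apply it to $\alpha\lambda$ with $\alpha\uparrow 1$ and pass to the limit, a point the paper glosses over. Your handling of the second, one-sided statement by dropping the $A_{k-1}$ term from $V$ and taking $X_{k-1}=0$ in the factorization is also correct and mirrors the paper's ``argue as above.''
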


\begin{proof}
 Observe that there is a $\sigma>0$ such that
 $L_A(\delta_k + \sigma \sum_{j\ne k,k+1,k-1} \delta_j) \succeq 0.$
 Equivalently $ \delta_k +\sigma \sum_{j\ne k,k+1,k-1} \delta_j$
 is in the closure of $\fP_A.$
 By convexity, the average of $\delta_k + \sigma \sum_{j\ne k,k+1,k-1} \delta_j$
 and $\delta_k + \epsilon \delta_{k+1}$ and $\delta_k + \epsilon \delta_{k-1}$
 is also in $\fP_A.$ Thus, there is an $\eta>0$ such that
$ L_A(\delta_k +\eta \sum_{j\ne k} \delta_j)  \succeq 0.$ 
 From here an application  of Lemma~\ref{l:KR+}, with
  $X_k=T_k$ and $X_j = \frac{T_j}{\eta}$ for $j\ne k$
 establishes the first part of the  lemma.

 To prove the second part of the lemma, note that there is an $\eta>0$
 such that 
 $L_A(\delta_k + \eta \sum_{k-1\ne j\ne k} \delta_j)\succeq 0$
 and argue as above.
\end{proof}

\subsection{Proof of Lemma~\ref{l:mobius+}}
  Let $k=\pi(j).$
  From Proposition~\ref{l:ismob}, there exists $\fh_k\in \fH^j$ such
 that
\[
 \varphi_j(x) = \fg_j(x_k) +\fh_k(x),
\]
 where $\fg_k$ is given in equation~\eqref{d:gm}.

 Now suppose $k\in \fN.$ 
 We argue by induction on $N$ that the result 
 of item~\eqref{i:m+1} of Lemma~\ref{l:mobius+} holds
 when evaluating $\varphi_j$ at a tuple $T$ that
 is nilpotent of order $N.$  
That is, $\fh_k(T)=0$
 if $T$ is nilpotent of order $N.$ Equivalently, when writing
\[
 \fh_k(x) =\sum_{\alpha\in \sW^j} c_\alpha x^\alpha,
\]
 we have $c_\alpha=0$ whenever $|\alpha|<N.$
 The case of $N=2$ follows
 from Lemma~\ref{l:eees}. Now suppose the result is
 true for an $N\ge 2.$ 
 Let $T_k$ denote the $(N+1)\times (N+1)$ matrix with
 $\lambda_{k,1}=1$ in the $(1,2)$ entry; $\lambda_{k,\au}\in\CC\setminus \{0\}$ in the $(\au,\au+1)$
 entry for $\au\ge  2$ and $0$ elsewhere. Similarly, for $\ell\ne k,$
 let $T_\ell$ denote the $(N+1)\times (N+1)$ matrix with 
 $\lambda_{\ell,\au}\in \CC$ in the $(\au,\au+1)$ entry for $1\le \au \le N$
 and $0$ elsewhere. 
  The tuple $T=T(\lambda)$ thus depends (linearly) on 
 the $\lambda_{\ell,\au}\in\CC$ for $1\le \ell\le \vg$ 
 and $1\le \au\le N$  and $(\ell,\au)\ne (k,1).$ 
 Given such a $\lambda$ and a word $\alpha=x_{j_1}x_{j_2}\cdots x_{j_N},$
 let
\[
 \lambda^\alpha = \lambda_{j_1,1} \lambda_{j_2,2}\cdots \lambda_{j_N,N}
\]
and note that any $\lambda_{j,u}$ appears at most once
  in this product. Hence,
if  $c_\alpha\in \CC$ and
\[
 \sum_{|\alpha|=N} c_\alpha \lambda^\alpha =0,
\]
 for $\lambda$ an open set of such $\lambda$ (with $\lambda_{k,1}=1$),
 then $c_\alpha=0$ for all $\alpha.$

 In the case of item~\eqref{i:m+1}, there is an open 
 neighborhood $U$ of $0$ in $\CC^{N\vg-1}$ such that for
 $\lambda\in U,$ the tuple $T(\lambda)$ is in the boundary
 of $\fP_A$  by  Lemma~\ref{l:KRapplied}.  
 Let $M(\lambda) = \varphi_j(T(\lambda)).$ By the induction
 hypothesis, and using notation from equation~\eqref{e:vphj},
\begin{equation*}
 M(\lambda) = \fg_j(T_k(\lambda)) 
   \, + \sum_{|\alpha|=N, \alpha\ne x_k^N} a^k_\alpha T^\alpha(\lambda).
\end{equation*}
 Since, for $|\alpha|=N,$ the matrix $T^\alpha$ is
 $0$ except for its upper right entry.
 When $\lambda_{k,j}\ne 0,$  Lemma~\ref{l:evenmoreCint}
 implies 
\[
 \sum_{|\alpha|=N, \alpha\ne x_k^N} a^k_\alpha T^\alpha =0.
\]
 On the other hand,  
 letting $P$ denote the matrix with $1$ in the upper right
  entry and $0$ elsewhere, 
\begin{equation*}
\sum_{|\alpha|=N, \alpha\ne x_k^N} a^k_\alpha T^\alpha
  = \sum a^k_\alpha \lambda^\alpha \, P.
\end{equation*}
Hence 
 $a^k_\alpha=0$ for each $\alpha\in \sW^k_N.$
 Thus $\varphi_j(T) =\fg_j(T),$ completing the induction 
 argument in the setting of item~\eqref{i:m+1}.

 We now drop the $k\in \fN$ assumption and 
 proceeds as above, but with 
 $\lambda_{k-1,u}=0=\lambda_{k+1,u}$ for each $u.$ %
 The conclusion in this setting becomes:  if $\alpha$ is
 a word that does not include either $x_{k-1}$ and  $x_{k+1}$ 
 (and $\alpha\ne x_k^N$), 
 then $c_\alpha=0.$  Hence if $T\in\fP_A$ and 
 $T_{k-1}=0=T_{k+1},$ then $\fh_k(T)=0.$

\subsection{The sets $\fN$ and $\fZ$}
 This subsection develops criteria for  membership in
 the sets  $\fN$  and $\fZ,$ where
\begin{equation}
 \label{e:Zagain}
 \fZ=\{j: L(\delta_j +\epsilon\delta_{j+1}\not\succeq 0, \ \
   L(\epsilon \delta_{j-1}+\delta_j)\not \succeq 0, 
 \ \ \mbox{ for all } \epsilon>0\}
\end{equation}
 and $\fN=\{1,2,\dots,\vg\}\setminus \fZ.$

\begin{lemma}
\label{l:if-}
 Fix $1\le j< \vg$ and  let $k=\pi(j)$ and $\ell=\pi(j+1).$
 If  $L_A(\delta_j+\delta_{j+1})$  is not positive
 semidefinite, then either $\ell=k-1$ or $\ell=k+1.$

\end{lemma}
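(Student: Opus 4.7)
My plan is to argue by contraposition: assume $\ell\notin\{k-1,k+1\}$ and, combined with $\ell\ne k$ from the injectivity of $\pi$, deduce that $|k-\ell|\ge 2$. I will then derive $L_A(\delta_j+\delta_{j+1})\succeq 0$, contradicting the hypothesis. The strategy is to produce a scalar point $T\in\fP_A[1]$ whose image $\varphi(T)$, after projecting out every coordinate except the $j$-th and the $(j{+}1)$-st, equals $\delta_j+\delta_{j+1}$. Since the coordinate-zeroing projections $\pi_\mfI$ preserve $\fP_A$ and, by continuity, $\overline{\fP_A}$, that membership will force $\delta_j+\delta_{j+1}\in\overline{\fP_A[1]}$ and give the contradiction.

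For the test point I take $T=\alpha\delta_k+\beta\delta_\ell$ with $\alpha,\beta\in\mathbb{D}$. The separation $|k-\ell|\ge 2$ guarantees that the nonzero off-diagonal blocks of $L_A(T)$ occupy the two non-overlapping positions $(k,k+1)$ and $(\ell,\ell+1)$, so up to a block permutation $L_A(T)$ is the direct sum of identity blocks with the two $2\times 2$ block subsystems tying $(k,k+1)$ via $\alpha C_k$ and $(\ell,\ell+1)$ via $\beta C_\ell$; a Schur-complement check shows each subsystem is positive definite whenever $|\alpha|,|\beta|<1$, since $\|C_k\|=\|C_\ell\|=1$. Hence $T\in\fP_A[1]$. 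The same separation gives $T_{k-1}=T_{k+1}=T_{\ell-1}=T_{\ell+1}=0$. Proposition~\ref{l:ismob} writes $\varphi_j(x)=\fg_j(x_k)+\fh_k(x)$ with $\fh_k\in\fH^k$, and Lemma~\ref{l:mobius+}(\ref{i:m+4}) forces every monomial of $\fh_k$ to contain an $x_{k-1}$ or $x_{k+1}$ factor; evaluation at $T$ therefore gives $\varphi_j(T)=\fg_j(\alpha)$. Symmetrically, $\varphi_{j+1}(T)=\fg_{j+1}(\beta)$.

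Because $|b_j|,|b_{j+1}|<1$, the Möbius transformations $\fg_j$ and $\fg_{j+1}$ map $\TT$ onto $\TT$, so I pick $\alpha_0,\beta_0\in\TT$ with $\fg_j(\alpha_0)=\fg_{j+1}(\beta_0)=1$, together with approximating sequences $\alpha_n\to\alpha_0$, $\beta_n\to\beta_0$ in $\mathbb{D}$. Setting $T_n=\alpha_n\delta_k+\beta_n\delta_\ell\in\fP_A[1]$, the remaining coordinates $\varphi_i(T_n)$ for $i\ne j,j+1$ all lie in $\mathbb{D}$ (as $\fP_A\subseteq\mathscr{F}^\vg$), so after extracting a subsequence $\varphi(T_n)\to p\in\overline{\fP_A[1]}$ with $p_j=p_{j+1}=1$. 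Applying $\pi_\mfI$ with $\mfI=\{1,\dots,\vg\}\setminus\{j,j+1\}$ yields $\delta_j+\delta_{j+1}=\pi_\mfI(p)\in\overline{\fP_A[1]}$, i.e.\ $L_A(\delta_j+\delta_{j+1})\succeq 0$---the desired contradiction.

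The main technical delicacy I anticipate is justifying the identity $\varphi_j(T)=\fg_j(\alpha)$ at the non-nilpotent scalar point $T$, since Proposition~\ref{l:ismob} and Lemma~\ref{l:mobius+}(\ref{i:m+4}) were proved by evaluating at nilpotent tuples. What those results really deliver, however, is the pinning-down of the formal power-series coefficients of $\varphi_j$: every coefficient of $\fh_k$ corresponds to a word containing $x_{k-1}$ or $x_{k+1}$. Since $\varphi$ is analytic throughout $\fP_A[1]$, this coefficient-level information lifts to an analytic identity that is valid at any $T\in\fP_A[1]$, and setting $T_{k\pm 1}=0$ collapses the entire tail to zero. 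Edge cases $k\in\{1,\vg\}$ present no difficulty, with the missing neighbour simply dropped from the argument.
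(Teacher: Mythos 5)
Your proof is correct and follows essentially the same contrapositive strategy as the paper: use $|k-\ell|\ge 2$ to place $T=\alpha\delta_k+\beta\delta_\ell$ in $\fP_A[1]$ with $T_{k\pm1}=T_{\ell\pm1}=0$, invoke Lemma~\ref{l:mobius+}\eqref{i:m+4} to reduce $\varphi_j(T)$ and $\varphi_{j+1}(T)$ to single-variable M\"obius values, and push to the boundary to force $L_A(\delta_j+\delta_{j+1})\succeq 0$. The only cosmetic differences are that you normalize the boundary values to $1$ by choosing $\alpha_0,\beta_0$ rather than absorbing the resulting unimodular phases by a diagonal unitary conjugation, and you finish with $\pi_{\mfI}$ and a subsequence rather than directly reading off the relevant $3\times 3$ principal block.
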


\begin{proof} 
  Arguing the contrapositive,  suppose $\ell\ne k+1$
 and $\ell\ne k-1.$  By Lemma~\ref{l:mobius+} item~\eqref{i:m+4},
 $\varphi_j(x\delta_k + w\delta_\ell) = \fg_j(x)$
 and $\varphi_{j+1}(x\delta_k+w\delta_\ell) = \fg_{j+1}(w),$
 in the latter case since $\ell=\pi(j+1)$ and $k\ne \ell+1$
 and $k\ne \ell-1.$ 
 On the other hand $t(\delta_k+\delta_\ell)$ is in 
  $\fP_A$ for $0<t<1$ and hence so is
 $\varphi(t(\delta_k+\delta_\ell)).$  In particular, 
\begin{equation}
 \label{e:if-1}
 \begin{pmatrix} I & \fg_{b_j}(t) \otimes C_j & 0 \\
  \fg_{b_j}(t)^*\otimes C_j^* & I & \fg_{b_{j+1}}(t)  \otimes C_{j+1} \\
  0 & \fg_{b_{j+1}}(t)^*  \otimes C_{j+1}^* & I \end{pmatrix} \succeq 0.
\end{equation}
 Because $\fg_{b_j}$ and $\fg_{b_{j+1}}$
  extend continuously across the boundary of the unit disc,
 it follows that the inequality of equation~\eqref{e:if-1} holds for $t=1.$
 Since $\fg_{b_j}(1)$ and $\fg_{b_{j+1}}(1)$ both have modulus $1,$
 it follows that $L_A(\delta_j+\delta_{j+1})\succeq 0$ 
 and the proof is complete.
\end{proof}

\begin{lemma}
 \label{l:if+}
 Fix $1\le j\le \vg$ and  let $k=\pi(j).$ %

\begin{enumerate}[(a)] \itemsep=7pt
 \item \label{i:if++}
 If $j\in\fZ^+,$ then 
  \begin{enumerate}[(i)]
   \item \label{i:if++i}  $\pi(j+1)=k+1$ 
   \item \label{i:if++ii}  $b_j=0=b_{j+1};$ and
   \item \label{i:if++iii} $k\in\fZ^+.$
 \end{enumerate}
  \item \label{i:if+-}  If $j\in \fZ^-,$ then 
      \begin{enumerate}[(i)]
   \item $\pi(j-1)=k-1;$ 
   \item $b_j=0=b_{j-1};$ and
   \item $k\in\fZ^{-}.$
  \end{enumerate}
\end{enumerate}
\end{lemma}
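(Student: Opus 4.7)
The plan is to prove part (a); part (b) follows by the evident symmetric argument, exchanging $j+1$ with $j-1$, $k+1$ with $k-1$, and $\fZ^+$ with $\fZ^-$. I would establish the three conclusions of (a) in order, each leveraging the previous.

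For (i), since $j\in\fZ^+$, choosing $\epsilon=1$ in \eqref{e:Zagain} gives $L_A(\delta_j+\delta_{j+1})\not\succeq 0$, so Lemma \ref{l:if-} forces $\pi(j+1)\in\{k-1,k+1\}$. To rule out $\pi(j+1)=k-1$, I would use Lemma \ref{l:fZ+} to select a unit vector $v$ with $C_j^*C_jv=v$ and $C_{j+1}^*v\neq 0$, which encodes the asymmetric forward direction $j\to j+1$. Construct a small scalar tuple $T$ with $T_j=s$, $T_{j+1}=t$, others zero, so that $T\in\fP_A$. By Proposition \ref{l:ismob} and Lemma \ref{l:mobius+}(ii), only $\varphi_j$ and $\varphi_{j+1}$ vary with $(s,t)$. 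If $\pi(j+1)=k-1$, then the $(k-1,k)$ and $(k,k+1)$ blocks of $L_A(\varphi(T))$ pair $\varphi_{j+1}(T)$ and $\varphi_j(T)$ on opposite sides of the $k$-th block row; testing the associated Schur complement against the vector $v$ (suitably embedded) would contradict the strict failure of positivity that $j\in\fZ^+$ ensures. Hence $\pi(j+1)=k+1$.

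For (ii), with $\pi(j+1)=k+1$, consider a scalar tuple $T$ with $T_k=s$, $T_{k+1}=t$, zeros elsewhere, and push $(s,t)$ toward the $\fZ^+$-boundary selected by $v$. By Proposition \ref{l:ismob}, $\varphi_j(T)=\fg_j(s)+\fh_k(T)$ and $\varphi_{j+1}(T)=\fg_{j+1}(t)+\fh_{k+1}(T)$, with $\fh$-terms in $\{x_k,x_{k+1}\}$ only. At the boundary the kernel of $L_A(\varphi(T))$ is constrained simultaneously by the Carath\'eodory extremality of Lemmas \ref{l:Cintstart}--\ref{l:evenmoreCint} and by the vector $v$ of Lemma \ref{l:fZ+}. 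Matching these two descriptions forces each M\"obius map to send its base point to the boundary kernel vector, which happens only when $b_j=0$ and $b_{j+1}=0$.

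For (iii), apply (i) and (ii) to the inverse automorphism $\varphi^{-1}$, whose associated permutation is $\pi^{-1}$ by the uniqueness portion of Proposition \ref{l:ismob}. Assume for contradiction $k\notin\fZ^+$; then $L_A(\delta_k+\epsilon\delta_{k+1})\succeq 0$ for some $\epsilon>0$, and Lemma \ref{l:KRapplied} supplies an $\eta$-tube of tuples in $\overline{\fP_A}$ along the $(k,k+1)$-direction. Transporting these tuples through $\varphi^{-1}$, using $b_j=b_{j+1}=0$ from (ii) to see that $\varphi^{-1}$ acts essentially linearly on this direction, produces tuples in $\overline{\fP_A}$ along the $(j,j+1)$-direction that, via Lemma \ref{l:fZ+}, contradict $j\in\fZ^+$. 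Hence $k\in\fZ^+$. The main obstacle is step (i): both candidates $\pi(j+1)=k\pm 1$ produce formally similar matrix inequalities, and the distinction lives entirely in the directional asymmetry between $C_j^*C_j$ and $C_{j+1}C_{j+1}^*$ packaged in Lemma \ref{l:fZ+}; once this orientation is fixed, (ii) and (iii) are largely mechanical propagations through the Carath\'eodory-interpolation machinery of Section \ref{s:car} and the hyper-Reinhardt invariance lemmas of Section \ref{s:higher}.
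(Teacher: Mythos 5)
Your proposal differs in order and in several concrete details from the paper's proof, and in each of its three steps there is a genuine gap.

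\textbf{Part (i).} You propose to construct a scalar tuple $T$ with $T_j=s$, $T_{j+1}=t$ and all other coordinates zero. This is the wrong choice on two counts. First, the coordinates that make $\varphi_j$ and $\varphi_{j+1}$ vary are $T_k=T_{\pi(j)}$ and $T_{\pi(j+1)}$, not $T_j$ and $T_{j+1}$; with your choice, by Proposition~\ref{l:ismob} and Lemma~\ref{l:mobius+}\eqref{i:m+4}, $\varphi_j(T)$ is simply the constant $b_j$ and nothing happens. Second, even after correcting the indices, scalar inputs give you no handle on the higher-order remainder $\fh_k$ of Proposition~\ref{l:ismob}. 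The paper's argument is built around a pair of $3\times 3$ nilpotent matrices $T_k$ and $T_{k-1}$ chosen so that the only nonzero word of length $\ge 2$ is $T_kT_{k-1}$; Lemma~\ref{l:evenmoreCint} then kills the surviving higher-order coefficients, and the contradiction comes from pairing a boundary vector $\gamma$ of $\varphi_j(T)$ (from Lemma~\ref{l:Cintstart}) against $\varphi_{j+1}(T)^*$ via Lemma~\ref{l:fZ+}. Your phrase ``testing the associated Schur complement against the vector $v$ ... would contradict the strict failure of positivity'' is not an argument; you would have to produce a tuple on which to test, explain why the nonlinear terms are under control, and exhibit the contradiction explicitly.

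\textbf{Part (ii).} ``Matching these two descriptions forces each M\"obius map to send its base point to the boundary kernel vector'' is not a proof either. The paper's treatment is concrete and, importantly, \emph{precedes} part (i) in part: it first shows $b_{j+1}=0$ using the $2\times 2$ nilpotent $T_k=S$, so that $\varphi_j(T)$ has norm one while $\varphi_{j+1}(T)=b_{j+1}I$; Lemma~\ref{l:fZ+} combined with $j\in\fZ^+$ then forces $b_{j+1}=0$. This fact is then used (to know the form of $\varphi_{j+1}(T)$) when ruling out $\pi(j+1)=k-1$. Only afterwards is $b_j=0$ obtained, using $T_k=T_{k+1}=S$ (which requires $\pi(j+1)=k+1$ to already be known). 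So the logical dependencies are $b_{j+1}=0 \Rightarrow \pi(j+1)=k+1 \Rightarrow b_j=0$, not (i)$\Rightarrow$(ii) as your outline suggests. Reordering requires redoing the argument, and you do not supply the replacement.

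\textbf{Part (iii).} The claim that, because $b_j=b_{j+1}=0$, ``$\varphi^{-1}$ acts essentially linearly on this direction'' is false; the component functions retain nonlinear $\fh$-terms (which Lemma~\ref{l:mobius+}\eqref{i:m+4} only guarantees vanish when the two adjacent coordinates are zero). The paper avoids $\varphi^{-1}$ entirely: it builds a $3\times 3$ tuple $T$ in coordinates $k,k+1$ using the hypothetical $\epsilon$ with $L_A(\delta_k+\epsilon\delta_{k+1})\succeq 0$, computes $\varphi_j(T)$ and $\varphi_{j+1}(T)$ explicitly (using $b_j=b_{j+1}=0$ to pin the zero diagonals and contraction bounds to kill the stray corner entry), and reads off $C_j^*C_j+\epsilon^2 C_{j+1}C_{j+1}^*\preceq I$ from Lemma~\ref{l:fZ+}, contradicting $j\in\fZ^+$. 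Your route through $\varphi^{-1}$ also risks circularity: applying (i) and (ii) to $\varphi^{-1}$ at the index $k$ presupposes knowledge of whether $k\in\fZ^+$, which is the very thing in question.

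In short, the high-level intuition --- the asymmetry between $C_j^*C_j$ and $C_{j+1}C_{j+1}^*$ captured by Lemma~\ref{l:fZ+} --- is right, but the proposal never actually executes a single one of the four estimates (for $b_{j+1}=0$, for $\pi(j+1)\ne k-1$, for $b_j=0$, and for $k\in\fZ^+$) and the test tuples it gestures at are the wrong shape to do so.
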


\begin{proof}
 Suppose $j\in \fZ^+.$ Thus, for each $\epsilon>0,$ the matrix
 $L_A(\delta_j+\epsilon \delta_{j+1})$ is not positive
 semidefinite. In particular,  if $\epsilon>0,$ then
\[
 C_j^*C_j + \epsilon^2  C_{j+1}C_{j+1}^* \not\preceq I,
\]
by Lemma~\ref{l:fZ+}.

 Choose 
\[
 T_k=S=\begin{pmatrix} 0&1\\0&0\end{pmatrix}
\]
 and $T_a=0$ otherwise. Thus $T$ is in the boundary of
 $\fP_A,$ the matrix $\varphi_j(T)=\fg_j(T_k)$ has norm one and 
 there is a scalar $e_{j+1}$ such that 
\[
 \varphi_{j+1}(T)=b_{j+1}I_2 + e_{j+1}T_{\pi(j+1)} 
  = b_{j+1}I. 
\]
 
On the other hand,  $L_A(\varphi(T))\succeq0$ implies,
 by Lemma~\ref{l:fZ+},
\[
 \varphi_j(T)^*\varphi_j(T)\otimes C_j^*C_j
 + |b_{j+1}|^2 C_{j+1}C_{j+1}^* \preceq I,
\]
 and thus $b_{j+1}=0$ by Lemma~\ref{l:fZ+}, proving
 half of item~\eqref{i:if++ii}.

 From Lemma~\ref{l:if-}, $\ell=k-1$ or $\ell=k+1.$ Arguing
 by contradiction, suppose $\ell=k-1.$  In this case,
 let $T_a=0$ for $k-1\ne a\ne k$ and 
\[
 T_k = \begin{pmatrix} 0 & 1 & 0\\0&0&0\\ 0&0&0\end{pmatrix},
 \ \ \
T_{k-1}=\begin{pmatrix} 0&0&0\\0&0&1\\0&0&0\end{pmatrix}.
\]
 Thus $T$ is in the boundary of $\fP_A$ and hence 
 so is $\varphi(T).$ The only word in $T$ of length two
 or longer that is not $0$ is  $T_k T_{k-1}.$ Hence,
 letting $e_j=e^{i\theta_j} (|b_j|^2-1)$ and $e_{j+1} = e^{i\theta_{j+1}},$
\[
\begin{split}
 \varphi_j(T) & = \fg_j(T_k) + \alpha T_k T_{k-1} 
  = b_j + e_j T_k + \alpha T_k T_{k-1} = 
   \begin{pmatrix} b_j & e_j & \alpha \\0& b_j &0\\0&0&b_j\end{pmatrix} \\
 \varphi_{j+1}(T) & =\fg_{j+1}(T_{k-1}) +\beta T_k T_{k-1}
  =  e_{j+1}T_{k-1} + \beta T_k T_{k-1}= 
 \begin{pmatrix} 0&0&\beta\\ 0&0& e_{j+1}\\
 0&0&0 \end{pmatrix},
\end{split}
\]
 for some $\alpha,\beta\in \CC.$  From Lemma~\ref{l:evenmoreCint},
 $\alpha=0=\beta.$  By Lemma~\ref{l:Cintstart} 
 there exists a unit vector $\gamma\in \CC^3$
 such that $\|\varphi_j(T)\gamma\|=1$ and 
  $\gamma_2\ne 0$ but $\gamma_3=0.$ 
 Since $|e_{j+1}|=1$ and $\gamma_2\ne 0,$ it follows that 
 $\varphi_{j+1}(T)^* \gamma\ne 0.$ 
 Since $L_A(\varphi(T))\succeq0,$ 
\[
 \begin{pmatrix} I & \varphi_j(T)\otimes C_j & 0 \\ 
  \varphi_j(T)^* \otimes C_j^*  & I & 
  \varphi_{j+1}(T)\otimes C_{j+1}  \\ 0 & 
   \varphi_{j+1}(T)^* \otimes C_{j+1}^*  & I \end{pmatrix} \succeq 0
\]
 and hence, by Lemma~\ref{l:fZ+},
\[
 I \succeq \|\varphi_j(T)\gamma\|^2 C_j^* C_j 
   + \|\varphi_{j+1}(T)^*\gamma\|^2   C_{j+1}C_{j+1}^* 
  = C_j^* C_j 
   + \|\varphi_{j+1}(T)^*\gamma\|^2   C_{j+1}C_{j+1}^*.
\]
 Lemma~\ref{l:fZ+} now gives the contradiction $\varphi_{j+1}(T)^*\gamma=0.$
 Thus $\ell=k+1,$ proving item~\eqref{i:if++i}.

 Next choose $T_k=S=T_{k+1}$ and $T_a=0$ otherwise,
 where $S$ is given in equation~\eqref{d:firstS}.
 Hence $T$ is in the boundary of $\fP_A$ and thus
 so is $\varphi(T).$ Letting $X=\varphi_j(T)$
 and $Y=\varphi_{j+1}(T),$ it follows that 
\[
 X^*X  \otimes C_j^*C_j + YY^* \otimes C_{j+1}C_{j+1}^*\preceq I.
\]
 Letting $\gamma$ denote a unit vector with
 $X^*X\gamma=\gamma,$ we find $Y^*\gamma=0.$ Since
 $b_{j+1}=0,$
\[
 Y=\begin{pmatrix} 0& e_{j+1} \\0&0\end{pmatrix},
\]
 where $|e_{j+1}|=1.$  It follows that the first
 component of $\gamma$ is zero and thus $b_j=0,$
 and the proof of item~\eqref{i:if++ii} is complete.
 
 To prove item~\eqref{i:if++iii}, suppose,
 by way of contradiction,  there is an $\epsilon>0$ such that
 $L_A(\delta_k+\epsilon \delta_{k+1})\succeq 0.$ 
 The tuple $T$ defined by $T_a=0$
 for $k\ne a \ne k+1$ and
\[
 T_k =\begin{pmatrix} 0&1&0\\0&0&1\\0&0&0\end{pmatrix}, \ \ \
 T_{k+1} =\begin{pmatrix} 0&1&0\\0&0&\epsilon\\0&0&0\end{pmatrix}
\]
is in the boundary of $\fP_A.$ Thus $\varphi(T)$ is in the 
 boundary of $\fP_A.$ Since $b_j=0,$ 
\[
 X=\varphi_j(T) e^{i\theta_j} T_k + \fh_k(T)
   = \begin{pmatrix} 0 & e^{i\theta_j} & y\\
 0&0&e^{i\theta_j}\\0&0&0\end{pmatrix},
\]
 for some $y.$ Since $\varphi_j(T)$ is a contraction,
 $y=0.$ Similarly, since $b_{j+1}=0,$ 
\[
 Y=\varphi_{j+1}(T) =\begin{pmatrix} 0&e^{i\theta_{j+1}} & 0\\
 0 & 0& \epsilon e^{i\theta_{j+1}} \\0&0&0 \end{pmatrix}.
\]
  Since $L_A(\varphi(T))\succeq 0$, an application
 of Lemma~\ref{l:fZ+} gives
\[
 X^*X\otimes C_j^*C_j + YY^* \otimes C_{j+1}C_{j+1}^* \preceq I,
\]
 from which it follows that
\[
 C_j^*C_j + \epsilon^2 C_{j+1}C_{j+1}^* \preceq I.
\]
 Equivalently $L_A(\delta_j+\epsilon \delta_{j+1})\succeq 0,$
  a contradiction that completes the proof of item~\eqref{i:if++}.

 Item~\eqref{i:if+-} is proved similarly.
\end{proof}

\subsection{Summary}
\label{s:mobius-sumry}
The following proposition records the key findings from this
section.

\begin{proposition}
\label{p:sumry}
  The sets $\fN$ and $\fZ$ partition  $\{1,2,\dots,\vg\}.$

  If $\varphi$ is an automorphism of $\fP_A,$ then 
  there is a permutation $\pi$ of $\{1,2,\dots,\vg\}$ and a tuple
 $\theta\in \RR^{\vg},$  such that 
\begin{enumerate}[(i)]
 \item the sets $\fN,$ $\fZ,$ $\fZ^+,$ $\fZ^-$ 
 are invariant under $\pi;$ 
 \item \label{i:sumryii} 
   if $j\in \fZ$ and $k=\pi(j),$  then there exists
  $\fh_k\in \fH^k$ such that 
\begin{equation}
\label{e:sumZ}
   \varphi_j(x)=e^{i\theta_j}x_{\pi(j)} +\fh_k(x),
\end{equation}
 and moreover, $\fh_k(T)=0$
 when $T_{k-1}=0=T_{k+1};$ 
\item if $j\in \fN$ and $k=\pi(j),$  then there is  an automorphism $\fg_j$
 of $\mathbb D$ such that
\begin{equation}
\label{e:sumN}
 \varphi_j(x) =\fm_{b_j}(e^{i\theta_j}x_{k})=\fg_j(x_k) = 
    \frac{b_j+e^{i\theta_j}x_k}{1+b_j^* e^{i\theta_j} x_k},
\end{equation}
 where $b_j=\varphi_j(0).$
\end{enumerate}
\end{proposition}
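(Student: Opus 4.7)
The plan is to assemble the conclusions of Proposition~\ref{l:ismob} and Lemmas~\ref{l:mobius+} and~\ref{l:if+} into the summary statement; no new technical ingredients should be required. The partition claim $\{1,\dots,\vg\}=\fN\cup\fZ$ is immediate from the definition $\fN:=\{1,\dots,\vg\}\setminus \fZ$, and I would take $\pi$ and $\theta\in\RR^{\vg}$ to be the permutation and angle tuple furnished by Proposition~\ref{l:ismob}.

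The first substantive step is to establish the $\pi$-invariance of $\fZ^\pm$, $\fZ$, and $\fN$. Lemma~\ref{l:if+}\eqref{i:if++} says that $j\in\fZ^+$ implies $\pi(j)\in\fZ^+$, so $\pi(\fZ^+)\subseteq \fZ^+$; equality follows because $\pi$ is a permutation of a finite set. Lemma~\ref{l:if+}\eqref{i:if+-} handles $\fZ^-$ in the same way. Taking unions gives $\pi$-invariance of $\fZ$, and passing to complements gives $\pi$-invariance of $\fN$.

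For item~\eqref{i:sumryii}, fix $j\in\fZ$; then $j\in\fZ^+$ or $j\in\fZ^-$, and in either case Lemma~\ref{l:if+} forces $b_j=0$. With $b_j=0$ the function $\fg_j$ of~\eqref{d:gm} collapses to $\fg_j(z)=e^{i\theta_j}z$, so Proposition~\ref{l:ismob} expresses $\varphi_j(x)=e^{i\theta_j}x_{\pi(j)}+\fh_k(x)$ for some $\fh_k\in\fH^k$ with $k=\pi(j)$, giving~\eqref{e:sumZ}. The vanishing of $\fh_k(T)$ when $T_{k-1}=0=T_{k+1}$ is precisely Lemma~\ref{l:mobius+}\eqref{i:m+4}. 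For the remaining item, fix $j\in\fN$; by the invariance just established, $k=\pi(j)\in\fN$, so Lemma~\ref{l:mobius+}\eqref{i:m+1} yields $\fh_k=0$, and Proposition~\ref{l:ismob} then gives the claimed M\"obius formula~\eqref{e:sumN}. That $\fg_j$ is an automorphism of $\mathbb{D}$ follows because $b_j\in\mathbb{D}$ (since $b=\varphi(0)\in\fP_A[1]\subseteq \mathbb{D}^{\vg}$).

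I do not expect any genuine obstacle here: the delicate work is already carried out by Lemma~\ref{l:if+}, whose Carath\'eodory-type boundary arguments simultaneously force $b_j=0$ and pin down the successor structure of $\pi$ on indices in $\fZ^\pm$. The present proposition is essentially a bookkeeping consolidation of those results together with Lemma~\ref{l:mobius+} and Proposition~\ref{l:ismob}, with the only subtle point being the need to invoke $\pi$-invariance of $\fN$ before one can apply Lemma~\ref{l:mobius+}\eqref{i:m+1} (which is phrased in terms of $k=\pi(j)\in\fN$ rather than $j\in\fN$).
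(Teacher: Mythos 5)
Your proposal is correct and follows the same route as the paper's proof: partition by definition, $\pi$-invariance of $\fZ^\pm$ from Lemma~\ref{l:if+}(iii) (promoted from $\pi(\fZ^\pm)\subseteq\fZ^\pm$ to equality by finiteness/bijectivity), $b_j=0$ on $\fZ$ from Lemma~\ref{l:if+}(ii) combined with Proposition~\ref{l:ismob} and Lemma~\ref{l:mobius+}\eqref{i:m+4} for~\eqref{e:sumZ}, and Lemma~\ref{l:mobius+}\eqref{i:m+1} for~\eqref{e:sumN}. You are more explicit than the paper about the finiteness step and about needing $\pi(j)\in\fN$ (not merely $j\in\fN$) to invoke Lemma~\ref{l:mobius+}\eqref{i:m+1}, both of which the paper leaves implicit; these are genuine clarifications, not differences in approach.
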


\begin{proof}
  By definition, $\fN$ and $\fZ$ partition.  
 
  Part of the conclusion of Lemma~\ref{l:if+} is that
  $\fZ^+$ and $\fZ^-$ are invariant under $\pi,$ from
 which it follows that $\fZ$ and $\fN$
 are also invariant. 

 If $j\in \fZ$ then, by Lemma~\ref{l:if+},
  $\varphi_j(0)=0$ and thus, 
 from  Lemma~\ref{l:mobius+} item~\eqref{i:m+4}, there is a
 $\theta_j\in\RR$ and $\fh_k\in\fH^k$ such that
 equation~\eqref{e:sumZ} holds.

 Finally, if $j\in \fN,$ then equation~\eqref{e:sumN} 
  holds by Lemma~\ref{l:mobius+} item~\eqref{i:m+1}.
\end{proof}

\section{\Norml Automorphisms and Compositions}
  Recall that $\fZ,\fN\subseteq \{1,2,\dots,\vg\}$ 
 (see equation~\eqref{e:Zagain}) depend
 only on $\fP_A$ and are thus independent
 of any particular automorphism of $\fP_A.$

 Suppose $\psi$ is an automorphism of $\fP_A$
 with associated permutation $\kappa.$  In particular
 the conclusions of Proposition~\ref{p:sumry} hold.
  We say that
 $\psi$ is a \df{\norml automorphism} if
\begin{enumerate}[(i)]
 \item \label{i:nai}
      $c_j=\psi_j(0)\ge 0;$ for each $j;$
 \item \label{i:naii}
    for $j\in \fZ$  and $k=\kappa(j),$
\[
  \psi_j(x) = \gamma_j x_{k} + \fh_k(x),
\]
 where  $\fh_k\in \fH^k$ and $\gamma_j\in \TT;$
 \item \label{i:naiii}
  for $j\in \fN$ and $k=\kappa(j),$ \index{$\fhm$}
\[
 \psi_j(x) = \fhm_{c_j}(x_{k}) :=\frac{c_j+ x_k}{1+ c_j x_k}.
\]
\end{enumerate}

\begin{lemma}
\label{l:make+}
 If $\varphi$ is an automorphism of $\fP_A$ with 
 associated permutation $\pi,$  then 
  there is a \norml  automorphism $\psi$ of $\fP_A$
 with associated permutation $\pi$ such that
 $\psi_j(0)=|\varphi_j(0)|$ for each $j.$ 
\end{lemma}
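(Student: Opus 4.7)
The plan is to compose $\varphi$ on both sides with trivial (Reinhardt) automorphisms of $\fP_A$. Specifically, I will produce $\gamma,\mu\in \TT^{\vg}$ and set
\[
\psi(x) := \gamma\bcdot \varphi(\mu\bcdot x).
\]
Since $x\mapsto \mu\bcdot x$ and $x\mapsto \gamma\bcdot x$ are trivial automorphisms of $\fP_A$, $\psi$ is an automorphism. Its associated permutation is unchanged from that of $\varphi$, because coordinate-wise unimodular scaling does not move the nonzero entries of the linear part $\derL$; so the associated permutation of $\psi$ is $\pi$.

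Now use Proposition~\ref{p:sumry}, which gives explicit normal forms for $\varphi_j$ according to whether $j\in\fN$ or $j\in\fZ$. Recall also that $\pi$ preserves both $\fN$ and $\fZ$. For $j\in \fN$, write $b_j=|b_j|e^{i\alpha_j}$ (take $\alpha_j=0$ if $b_j=0$) and let $k=\pi(j)$. I set $\mu_k := e^{i(\alpha_j-\theta_j)}$ and $\gamma_j := e^{-i\alpha_j}$. A short computation with $\fm_{b_j}(e^{i\theta_j}\mu_k x_k)$ factors out $e^{i\alpha_j}$ in both numerator and denominator and yields
\[
\varphi_j(\mu\bcdot x)=e^{i\alpha_j}\,\fm_{|b_j|}(x_k), \qquad \psi_j(x)=\fm_{|b_j|}(x_k)=\fhm_{|b_j|}(x_k),
\]
which is condition~(\ref{i:naiii}) of the definition of a normalized automorphism, and in particular $\psi_j(0)=|b_j|\ge 0$ as required by~(\ref{i:nai}). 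As $\pi(\fN)=\fN$, this assigns $\mu_k$ for every $k\in \fN$ and $\gamma_j$ for every $j\in\fN$.

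For the remaining indices I make the simplest possible choice: set $\mu_k=1$ for $k\in \fZ$ and $\gamma_j=1$ for $j\in\fZ$. For $j\in\fZ$ we have $b_j=0$ (by Lemma~\ref{l:if+}) and Proposition~\ref{p:sumry} gives $\varphi_j(x)=e^{i\theta_j}x_{\pi(j)}+\fh_k(x)$ with $\fh_k\in\fH^k$, so
\[
\psi_j(x)= e^{i\theta_j} x_{\pi(j)} + \widetilde{\fh}_k(x), \qquad \widetilde{\fh}_k(x):=\fh_k(\mu\bcdot x).
\]
Since $\fH^k$ is defined by the absence of pure $x_k^n$ monomials and coordinate rescaling sends each monomial to a scalar multiple of itself, $\widetilde{\fh}_k\in \fH^k$. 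The linear coefficient $e^{i\theta_j}$ lies in $\TT$, so~(\ref{i:naii}) holds; and $\psi_j(0)=0\ge 0$ handles~(\ref{i:nai}). This completes the construction.

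The only thing to watch is consistency of the definitions of $\mu_k$ across the two blocks and the verification that $\widetilde{\fh}_k$ lies in $\fH^k$ after the rescaling, but both are essentially bookkeeping: the invariance $\pi(\fN)=\fN$, $\pi(\fZ)=\fZ$ from Proposition~\ref{p:sumry} ensures $\mu_k$ is defined unambiguously on each block, and $\fH^k$ is evidently stable under $x\mapsto \mu\bcdot x$. I do not expect any genuine obstacle; the content of the lemma is entirely that trivial automorphisms furnish enough freedom to align the phases of $\varphi_j(0)$ and of the linear terms simultaneously on $\fN$, while leaving the $\fZ$-block in an acceptable form.
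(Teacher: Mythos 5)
Your proof is correct and follows essentially the same route as the paper: both compose $\varphi$ with trivial (Reinhardt) automorphisms on the left and on the right, choosing the unimodular tuples so that the linear coefficient and the constant term align in phase on the $\fN$-block, and then citing Proposition~\ref{p:sumry} to handle the $\fZ$-block. The only cosmetic deviation is your choice $\mu_k=1$ for $k\in\fZ$ (the paper instead rotates to make the linear coefficient equal to $1$ there), but since condition~\eqref{i:naii} of the definition only requires the linear coefficient to lie in $\TT$, both choices are valid and the arguments coincide in substance.
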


\begin{proof}
 Let $b_j=\varphi_j(0).$
From  Proposition~\ref{p:sumry}, 
 there is a permutation
 $\pi$ and a tuple $\theta \in \RR^{\vg}$  such that,
 for $j\in\fZ,$ 
\[
 \varphi_j(x) = e^{i\theta_j} x_{k} + \fh_k(x);
\] 
 and,  for $j\in \fN,$ 
\[
 \varphi_j(x) 
 = \fm_{b_j}(e^{i\theta_j}x_{\pi(j)}).%
\]

 Let  $\gamma_j=1$ if $b_j=0$ and $\gamma_j =\frac{b_j^*}{|b_j|}$
 otherwise. Let $\rho:\fP_A\to\fP_A$ denote the
 trivial  automorphism
\[
 \rho(x)=\gamma\bcdot x.
\]
Let  $\alpha_k = e^{-i\theta_{\pi^{-1}(k)}}\gamma_{\pi^{-1}(k)}^*$
and 
$\tau:\fP_A\to\fP_A$ denote the trivial  automorphism defined by
\[
\tau(x)=\alpha\bcdot x.
\]
 
 Finally, let $\psi:\fP_A\to\fP_A$ denote the automorphism
 $\psi = \rho \circ \varphi \circ \tau$ and note the 
 conclusion of the lemma is satisfied with $\kappa=\pi.$
 Indeed, with $k=\pi(j),$  if $j\in \fN,$ then,
\[
\begin{split}
 \psi_j(x)= & \gamma_j \varphi_j(\alpha\bcdot x) 
 =  \gamma_j\fm_{b_j}(e^{i\theta_j} (\alpha \bcdot x)_k)\\
  = &\gamma_j \fm_{b_j}(\gamma_j^* x_k) 
  = \gamma_j \frac{b_j + \gamma_j^* x_k}{1+ |b_j|x_k}\\
 &  = \frac{|b_j| + x_k}{1+|b_j|x_k}
   =\fhm_{|b_j|}(x_k). 
\end{split}
\]
 Hence item~\eqref{i:naiii} holds and moreover
  $\psi_j(0)=|b_j|\ge 0.$

If instead $j\in \fZ,$ then, since $\psi$ is an automorphism,
 then item~\eqref{i:naii} holds by item~\eqref{i:sumryii} of 
  Proposition~\ref{p:sumry}. Further $\psi_j(0)=0$
 and hence item~\eqref{i:nai} holds.
\end{proof}

\begin{lemma}
 \label{l:piplayswell}
 Suppose $\varphi$ and  $\wtvphi$ are automorphisms of $\fP_A$
 with associated permutations $\pi$ and $\wtpi.$
 Let $\psi$ denote the automorphism  
 $\wtvphi \circ \varphi$ and let $\tau$ 
 be its associated permutation.  If $j\in \fN,$
 then   $\tau(j)=\pi(\wtpi(j))$ and moreover, 
 with  $\wtvphi_j(x)=\widetilde{g_j}(x_{\wtpi(j)})$
 and $\varphi_{\wtpi(j)}(x)=\fg_{\wtpi(j)}(x_{\pi(\wtpi(j)}),$
 we have 
$\psi_j(x) =\widetilde{g_j} \circ \fg_{\widetilde{\pi}(j)}(x_{\tau(j)}).$
\end{lemma}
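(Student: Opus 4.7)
The plan is to reduce the claim to a direct substitution combined with the invariance properties of the set $\fN$ under the permutation associated to any automorphism.

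First, I would invoke Proposition~\ref{p:sumry} item~(i) to observe that $\fN$ is invariant under every permutation associated to an automorphism of $\fP_A$. Applied to $\wtpi,$ this gives $\wtpi(j)\in \fN$ since $j\in\fN;$ applied to $\pi,$ it gives $\pi(\wtpi(j))\in\fN.$ Consequently the Möbius-form clause (iii) of Proposition~\ref{p:sumry} is available both for $\wtvphi_j$ (because $j\in \fN$) and for $\varphi_{\wtpi(j)}$ (because $\wtpi(j)\in\fN$), which furnishes the two Möbius maps $\widetilde{g_j}$ and $\fg_{\wtpi(j)}$ appearing in the statement.

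Next I would simply compute
\[
 \psi_j(x) = \wtvphi_j(\varphi(x))
  = \widetilde{g_j}\bigl(\varphi(x)_{\wtpi(j)}\bigr)
  = \widetilde{g_j}\bigl(\varphi_{\wtpi(j)}(x)\bigr)
  = \widetilde{g_j}\circ \fg_{\wtpi(j)}\bigl(x_{\pi(\wtpi(j))}\bigr),
\]
where the first equality is the definition of $\psi,$ the second and fourth are the Möbius representations supplied by Proposition~\ref{p:sumry} item~(iii), and the third is evaluation at coordinate $\wtpi(j).$ Thus $\psi_j$ depends only on $x_{\pi(\wtpi(j))},$ and the dependence is nontrivial because a composition of two Möbius automorphisms of $\mathbb{D}$ is again a (nonconstant) Möbius automorphism.

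Finally I would identify the permutation $\tau$ associated to $\psi.$ Proposition~\ref{p:sumry} item~(iii) applied to $\psi$ (which is legitimate since $j\in\fN$) says $\psi_j$ is a Möbius function of the single variable $x_{\tau(j)}.$ Matching this against the display above, and using that a nonconstant Möbius function of $x_{\pi(\wtpi(j))}$ cannot also be a Möbius function of a different coordinate, forces $\tau(j)=\pi(\wtpi(j)).$ The asserted formula $\psi_j(x)=\widetilde{g_j}\circ \fg_{\wtpi(j)}(x_{\tau(j)})$ is then precisely the display rewritten with $\tau(j)$ in place of $\pi(\wtpi(j)).$ There is no real obstacle here; the only point requiring care is remembering to invoke the $\fN$-invariance of Proposition~\ref{p:sumry}(i) twice, so that both $\wtvphi_j$ and $\varphi_{\wtpi(j)}$ are in Möbius form and the composition lives inside a single coordinate.
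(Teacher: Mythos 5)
Your proposal is correct and follows essentially the same route as the paper's proof: use the invariance of $\fN$ under $\wtpi$ and $\pi$ to put both $\wtvphi_j$ and $\varphi_{\wtpi(j)}$ in single-variable (M\"obius) form, compose, and match the single coordinate against the one guaranteed for $\psi_j.$ The paper cites Lemma~\ref{l:mobius+} item~\eqref{i:m+1} directly where you cite Proposition~\ref{p:sumry} item~(iii), but these encode the same fact, so the argument is the same.
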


\begin{proof}
 Let $\wtb=\wtvphi(0)$ and $b=\varphi(0).$
  Given $j\in \fN,$ it follows that 
 $\pi(j)\in \fN$ and  $\pi(\wtpi(j))\in \fN$
 by Proposition~\ref{p:sumry}. 
 From item~\eqref{i:m+1} of Lemma~\ref{l:mobius+},
 there exists functions $g_j$ and $\widetilde{g}_j$
 as well as $\fg_{\wtpi(j)}$ 
 of one variable such that 
  $\wtvphi_j(x) = \widetilde{g_j}(x_{\wtpi(j)})$ and
 $\psi_j(x) = g_j(x_{\tau(j)})$ as well 
 as $\varphi_{\wtpi(j)}(x)=\fg_{\wtpi(j)}(x_{\pi(\wtpi(j)}).$ Hence
\[
 g_j(x_{\tau(j)}) 
   =  \psi_j(x) = \wtvphi_j(\varphi(x))
   =  \widetilde{g}_j(\varphi_{\wtpi(j)}(x))
  = \widetilde{g}_j(\fg_{\wtpi(j)}(x_{\pi(\wtpi(j)}))
 = \widetilde{g}_j\circ \fg_{\wtpi(j)} (x_{\pi(\wtpi(j))}).
\]
 It follows that $\tau(j)=\pi(\wtpi(j))$ 
 and $g_j(y) =\widetilde{g_j} \circ \fg_{\widetilde{\pi}(j)}(y).$
\end{proof}

Given an automorphism $\varphi$ of $\fP_A,$  let \index{$\cF_\varphi$}
\[
 \cF_{\varphi}=\{j: \varphi_j(0)\ne 0\})
\]
 and note that $\cF_\varphi\subseteq \fN$ by Proposition~\ref{p:sumry}.

\begin{lemma}
\label{l:compose}
 If $\varphi$ and $\wtvphi$ are  \norml automorphisms
 of $\fP_A$  with associated permutations $\pi$  and 
 $\wtpi$ respectively,   then $\psi=\wtvphi\circ\varphi$ is
 a \norml automorphism of $\fP_A$ and moreover, 
 $\cF_\psi=\cF_{\wtvphi}\cup \wtpi^{-1}(\cF_{\varphi}).$
\end{lemma}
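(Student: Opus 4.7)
The strategy is to verify the three normalization conditions for $\psi$ by decomposing the index set $\{1,\dots,\vg\}$ into $\fN$ and $\fZ$ and applying the structural results already established. Since $\psi = \wtvphi \circ \varphi$ is manifestly an automorphism of $\fP_A$, Proposition~\ref{p:sumry} supplies an associated permutation $\tau$ and a tuple $\theta \in \RR^\vg$ so that $\psi$ has the form dictated there; the task is to check that this form is already normalized and then to read off $\cF_\psi$.

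The case $j \in \fZ$ is essentially free: Proposition~\ref{p:sumry} gives $\psi_j(x) = e^{i\theta_j} x_{\tau(j)} + \fh_{\tau(j)}(x)$ for some $\fh_{\tau(j)} \in \fH^{\tau(j)}$, so condition (ii) holds with $\gamma_j := e^{i\theta_j} \in \TT$, and condition (i) holds since $\psi_j(0) = 0 \ge 0$. For $j \in \fN$, I would invoke Lemma~\ref{l:piplayswell}: it gives $\tau(j) = \pi(\wtpi(j))$ and $\psi_j(x) = \widetilde{g_j} \circ \fg_{\wtpi(j)}(x_{\tau(j)})$. Because $\fN$ is $\wtpi$-invariant (Proposition~\ref{p:sumry}), one has $\wtpi(j) \in \fN$, so both factors are normalized M\"obius maps: $\widetilde{g_j} = \fhm_{\widetilde{c_j}}$ and $\fg_{\wtpi(j)} = \fhm_{c_{\wtpi(j)}}$ with $\widetilde{c_j} := \wtvphi_j(0) \ge 0$ and $c_{\wtpi(j)} := \varphi_{\wtpi(j)}(0) \ge 0$. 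A direct calculation (the only routine computation involved) yields
\[
 \fhm_a \circ \fhm_b(z) = \fhm_{(a+b)/(1+ab)}(z),
\]
and the identity $1 - \tfrac{a+b}{1+ab} = \tfrac{(1-a)(1-b)}{1+ab} > 0$ for $a,b \in [0,1)$ ensures the new parameter $c_j := (\widetilde{c_j} + c_{\wtpi(j)})/(1 + \widetilde{c_j} c_{\wtpi(j)})$ again lies in $[0,1)$. Thus $\psi_j(x) = \fhm_{c_j}(x_{\tau(j)})$, which verifies conditions (iii) and (i) for $j \in \fN$.

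The description of $\cF_\psi$ drops out of the same formula. For $j \in \fZ$ we have $\psi_j(0) = 0$, so $j \notin \cF_\psi$; since $\cF_{\wtvphi} \cup \wtpi^{-1}(\cF_\varphi) \subseteq \fN$ (using $\cF_{\wtvphi}, \cF_\varphi \subseteq \fN$ and the $\wtpi$-invariance of $\fN$), both sides exclude $\fZ$. For $j \in \fN$, nonnegativity forces $c_j = 0$ precisely when $\widetilde{c_j} = 0$ and $c_{\wtpi(j)} = 0$, so $j \in \cF_\psi$ iff $j \in \cF_{\wtvphi}$ or $\wtpi(j) \in \cF_\varphi$, which is the claimed identity. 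There is no substantive obstacle here; the content of the lemma is essentially bookkeeping organized around the elementary M\"obius composition identity, which is the only step requiring any computation.
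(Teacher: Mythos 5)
Your proposal is correct and follows essentially the same route as the paper: you invoke Lemma~\ref{l:piplayswell} to reduce the $\fN$ case to the M\"obius composition identity $\fhm_a\circ\fhm_b=\fhm_{(a+b)/(1+ab)}$, verify nonnegativity of the composed parameter, and dispose of the $\fZ$ case and the $\cF_\psi$ bookkeeping via Proposition~\ref{p:sumry}. The only differences are cosmetic: you make explicit the bound $c_j<1$ and the $\fZ$-case check, which the paper's proof leaves implicit.
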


\begin{proof}
 Let $\tau$ denote the permutation associated with $\psi.$ 
 By Lemma~\ref{l:piplayswell}  $\tau=\pi \circ \wtpi$ on $\fN.$ 
  Let $\widetilde{b}=\wtvphi(0)$ and $b=\varphi(0).$
 By assumption $\widetilde{b}_j,\, b_j\ge 0.$
 
 Suppose $j\in \fN.$ From  Proposition~\ref{p:sumry}, 
 $\tau(j), \wtpi(j), \pi(j)\in \fN$ and 
\[
 \psi_j(x) = \wtvphi_j(\varphi(x)) 
  = \fhm_{\widetilde{b_j}}(\varphi_{\wtpi(j)}(x)) 
  = \fhm_{\widetilde{b_j}}(\fhm_{b_{\wtpi(j)}}(x_{\tau(j)}))
  = \fhm_{c_j}(x_{\tau(j)}),
\]
 where $c_j=(\widetilde{b_j}+b_{\wtpi(j)})
(1+\widetilde{b_j}b_{\wtpi(j)})^{-1}\ge 0.$ 
  In particular,  $\psi_j(0)\ge 0.$  Moreover, 
 $\psi_j(0)>0$    
  if and only  either $\widetilde{b_j}>0$ 
  or $b_{\wtpi(j)}>0.$
  Hence, if $j\in \fN,$ then   $j\in \cF_\psi$ if and only if
  either $j\in \cF_{\wtvphi}$
 or $\wtpi(j)\in \cF_{\varphi}.$  On the other hand,
 if $j\notin\fN,$ then  $\wtpi(j)\notin\fN$ and 
therefore $j\notin \cF_{\psi}\cup \cF_{\wtvphi}$ and
 $\wtpi(j)\notin \cF_{\varphi}.$
 Hence,  $\cF_\psi=\cF_{\wtvphi}\cup \wtpi^{-1}(\cF_{\varphi}).$ 
\end{proof}

Given a self map $f$ of a set $X$ and a positive  integer $n,$
  let \df{$f^{(n)}$} denote the composition of  $f$ with itself $n$-times.

\begin{lemma}
\label{p:sumry2}
 If $\varphi$ is a \norml automorphism of $\fP_A,$ then 
  there  is a positive integer $n$ such
 that $\psi=\varphi^{(n)}$ is a \norml automorphism
 of $\fP_A$ %
 such that, for each positive integer $m,$ 
\begin{enumerate}[(a)]
 \item \label{i:sy1}
   the permutation associated to $\psi^{(m)}$ 
   is the identity on $\fN;$ 
 \item \label{i:sy2}
 $\cF_\varphi \subseteq \cF_\psi=\cF_{\psi^{(m)}};$ and
 \item \label{i:sy3}
  both $\cF_\psi$ and $\widetilde{\cF_\psi}$ are invariant 
 for the permutations associated to $\psi^{(m)}.$
 \end{enumerate}
\end{lemma}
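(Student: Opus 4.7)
The plan is to let $\pi$ denote the permutation associated to $\varphi$; by Proposition~\ref{p:sumry}, $\pi$ preserves both $\fN$ and $\fZ.$ Choose $n$ to be the order of the restriction $\pi|_{\fN},$ so that $\pi^n$ equals the identity on $\fN,$ and set $\psi = \varphi^{(n)}.$ A routine induction using Lemma~\ref{l:compose} shows that $\psi$ is again a normalized automorphism.

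For part~\eqref{i:sy1}, I would iterate Lemma~\ref{l:piplayswell}, which asserts that on $\fN$ the permutation associated to a composition is the composition of the associated permutations. It follows that the permutation associated to $\psi^{(m)} = \varphi^{(nm)}$ coincides with $\pi^{nm}$ on $\fN,$ and is therefore the identity on $\fN$ by the choice of $n.$

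For part~\eqref{i:sy2}, iterating Lemma~\ref{l:compose} applied to $\varphi^{(k+1)} = \varphi \circ \varphi^{(k)}$ yields, by induction on $k,$
\[
\cF_{\varphi^{(k)}} = \bigcup_{j=0}^{k-1} \pi^{-j}(\cF_\varphi).
\]
Taking $k = n$ shows $\cF_\psi \supseteq \cF_\varphi$ (the $j=0$ term) and realizes $\cF_\psi$ as the orbit of $\cF_\varphi$ under the cyclic group generated by $\pi.$ Taking $k = nm$ and using that $\cF_\varphi \subseteq \fN$ together with $\pi^n|_{\fN}$ being the identity shows the union on the right stabilizes once $j$ has covered $n$ residues, giving $\cF_{\psi^{(m)}} = \cF_\psi.$

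For part~\eqref{i:sy3}, Proposition~\ref{p:sumry} gives $\cF_\psi \subseteq \fN,$ and also that the permutation associated to each $\psi^{(m)}$ preserves both $\fN$ and $\fZ;$ by part~\eqref{i:sy1} this permutation is the identity on $\fN,$ so it fixes $\cF_\psi$ pointwise. The complement $\widetilde{\cF_\psi}$ then decomposes as $\fZ \cup (\fN \setminus \cF_\psi),$ and each of these pieces is stabilized by the same considerations. The argument presents no real obstacle; it is a book-keeping exercise built on Lemmas~\ref{l:compose} and~\ref{l:piplayswell} together with the elementary fact that a permutation of a finite set has finite order. The only point requiring slight care is the induction establishing the orbit formula for $\cF_{\varphi^{(k)}},$ which is what makes the stabilization statement in~\eqref{i:sy2} transparent.
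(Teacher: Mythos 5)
Your proposal is correct and follows essentially the same route as the paper, built on Lemmas~\ref{l:compose} and~\ref{l:piplayswell}. The one cosmetic difference is that you derive the explicit orbit formula $\cF_{\varphi^{(k)}} = \bigcup_{j=0}^{k-1}\pi^{-j}(\cF_\varphi)$ and take $n$ directly to be the order of $\pi|_{\fN}$, whereas the paper first invokes monotonicity of the chain $\cF_{\varphi^{(p)}}$ to find a stabilization index $N$ and then separately takes a power $\ell$ of the associated permutation, setting $n=N\ell$; both choices of $n$ work, and your explicit formula makes the stabilization in item~\eqref{i:sy2} a bit more transparent.
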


\begin{proof}
 By Lemma~\ref{l:compose}, for each positive integer $p$ we have $\varphi^{(p)}$ is a \norml
 automorphism of $\fP_A$ and 
\[
 \cF_{\varphi} \subseteq \cF_{\varphi^{(2)}} \subseteq \cF_{\varphi^{(3)}} \subseteq \cdots.
\]
 Hence, there is $N$ such that $\cF_{\varphi^{(p)}} =\cF_{\varphi^{(N)}}$ for all $p\ge N.$

 Let $\sigma$ denote the permutation
 associated to $\varphi^{(N)}.$ 
  By Proposition~\ref{p:sumry}, $\fN$ is invariant
 under $\sigma.$ Let $\rho=\sigma|_{\fN}:\fN\to\fN.$ 
 There is an $\ell\ge 1$ such that $\rho^{(\ell)}$ is the identity. 
 Let $n=N\ell$ and let $\psi=\varphi^{(N\ell)}.$ 
 Let $\pi$ denote the permutation associated to
 $\psi.$  By Lemma~\ref{l:piplayswell}, $\pi =\sigma^{(\ell)}$
 and in particular $\pi$ is the identity on $\fN.$
 Similar reasoning shows $\pi^{(m)}$ is the permutation
 associated with $\psi^{(m)}$ and that $\pi^{(m)}$ 
 is the identity on $\fN$ proving item~\ref{i:sy1}.
 Item~\eqref{i:sy2}  follows  
 Lemma~\ref{l:piplayswell}.

 To prove item~\eqref{i:sy3} note that, since 
 $\pi$ is the identity on $\fN$ and 
 $\cF_{\psi} \subseteq \fN,$ it follows
 that $\pi$ is the identity on $\cF_{\psi}$
 and thus both $\cF_{\psi}$ and (therefore) its
 complement are invariant for $\pi.$
\end{proof}

\section{The Case of the Identity Permutation}
\label{sec:identity}
 In this section, we fix an automorphism $\psi$ of $\fP_A$ 
 with associated permutation $\pi$ satisfying
 the conclusion of Proposition~\ref{p:sumry2} and assume
 $\cF_\psi\ne \varnothing.$ (In particular,
 $\fN\ne \varnothing$ by Proposition~\ref{p:sumry}.)  Thus
 $\psi$ is a \norml automorphism, $\pi$ is the identity
 on $\fN$ and $\cF_{\psi^{(m)}} =\cF_{\psi} \ne \varnothing$ for all positive
 integers $m.$  For notational convenience,
 let $\cF=\cF_\psi.$ \index{$\cF$} The permutation
 $\pi$ is the identity on $\cF$ and both $\cF$ and $\ccG$
 are invariant for $\pi.$
 Let $b=\psi(0).$ Thus $b_j=\psi_j(0)\ge 0$ for all $j$ and
 $b_j>0$ if and only if $j\in \cF.$

\subsection{Two auxiliary Reinhardt  spectrahedra}
\label{s:aux}
 Let \index{$M_n(\C)^{\cG}$} \index{$M_n(\C)^{\ccG}$}
\[
 M_n(\C)^{\cG}=\{X=(X_j)_{j\in \cG}: X_j\in M_n(\C)\}.
\]
 Similarly,  let
\[
 M_n(\C)^{\ccG}=\{Y=(Y_j)_{j\in\ccG}: Y_j\in M_n(\C)\}.
\]
Given $X\in M_n(\C)^{\cG}$ and $Y\in M_n(\C)^{\ccG},$ let
 $T=(X,Y)\in M_n(\C)^{\vg}$ denote the tuple with 
 $T_j=X_j$ for $j\in \cG$ and $T_j=Y_j$ for $j\in\ccG.$

 Let \index{$\hat{b}$}  $\hat{b}=(b_j I_n)_{j\in \cG}\in M_n(\C)^{\cG}.$
 (Thus the $n$ is understood from context.)
 Similarly, let \index{$\hat{0}$} $\hat{0}=(0 I_n)_{j\in \cG}.$ With these 
 notations, let
\begin{equation*}
\begin{split}
    \fP_A^\cG = & \{Y\in M(\C)^{\ccG}: (\hat{0},Y)\in \fP_A\} \\
   \mfE_A^{\cG} =& 
   \{Y\in M(\C)^{\ccG}: (\hat{b},Y)\in\fP_A\}.
\end{split}
\end{equation*}
 \index{$\fP_A^\cG$} \index{$\mfE_A^{\cG}$}
Since $\fP_A$ is Reinhardt, it immediately follows that $\mfE_A^{\cG}$ is
 a subset of $\fP_A^{\cG}.$ %

\begin{lemma}
\label{l:mfEisaspec}
 Both $\fP_A^{\cG}$ and   $\mfE_A^{\cG}$ are 
 Reinhardt free spectrahedra  in $\vg-|\cG|$ variables.
\end{lemma}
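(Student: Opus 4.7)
Both sets are slices of $\fP_A$ obtained by fixing the coordinates indexed by $\cG$ at constant values. Since the defining pencil $L_A$ is affine linear in the full tuple, the restriction to any such slice is an affine linear Hermitian pencil in the remaining variables $Y=(Y_j)_{j\in\ccG}.$ The plan is to identify this restricted pencil with the monic linear pencil of a free spectrahedron in $\vg-|\cG|$ variables (after a constant-side conjugation in the $\mfE_A^{\cG}$ case), and then to transport the Reinhardt symmetry of $\fP_A$ to the slice.

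For $\fP_A^{\cG},$ substituting zero for the $\cG$-coordinates kills every term involving $A_j$ or $A_j^*$ with $j\in\cG,$ so
\[
 L_A(\hat{0},Y) \;=\; I_d\otimes I_n + \sum_{j\in\ccG} A_j\otimes Y_j + \sum_{j\in\ccG} A_j^*\otimes Y_j^* \;=\; L_{A'}(Y),
\]
where $A'=(A_j)_{j\in\ccG},$ and hence $\fP_A^{\cG}=\fP_{A'}.$ For $\mfE_A^{\cG}$ the same computation gives
\[
 L_A(\hat{b},Y) \;=\; M\otimes I_n + \sum_{j\in\ccG} A_j\otimes Y_j + \sum_{j\in\ccG} A_j^*\otimes Y_j^*,
\]
where $M=L_A(\pi_{\ccG}(b))$ and $b=\psi(0).$ The key auxiliary claim is $M\succ 0.$ This follows because $b\in\fP_A[1]$ (the automorphism $\psi$ sends $0\in\fP_A[1]$ into $\fP_A[1]$) and $\fP_A$ is closed under the projections $\pi_\mfI,$ so $\pi_{\ccG}(b)\in\fP_A[1].$ Given $M\succ 0,$ conjugation by $M^{-1/2}\otimes I_n$ converts the displayed pencil into the monic pencil $L_{\tilde{A}}(Y)$ with $\tilde{A}_j=M^{-1/2}A_jM^{-1/2},$ whence $\mfE_A^{\cG}=\fP_{\tilde{A}}.$ Both identifications exhibit the sets as free spectrahedra in $|\ccG|=\vg-|\cG|$ variables.

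Finally, the Reinhardt property of each slice follows by lifting the torus action. Given $\gamma\in\TT^{|\ccG|},$ extend it to $\gamma'\in\TT^{\vg}$ by setting $\gamma'_j=1$ for $j\in\cG.$ Then
\[
 \gamma'\bcdot(\hat{0},Y) \;=\; (\hat{0},\gamma\bcdot Y) \qquad\text{and}\qquad \gamma'\bcdot(\hat{b},Y) \;=\; (\hat{b},\gamma\bcdot Y),
\]
where the second identity uses $\gamma'_j=1$ on $\cG.$ Since the trivial automorphism $\varphi_{\gamma'}$ preserves $\fP_A,$ each slice is closed under $Y\mapsto\gamma\bcdot Y.$ No step here presents a serious obstacle; the only point requiring mild care is verifying $M\succ 0,$ which rests on the facts that $\psi(0)\in\fP_A[1]$ and that $\fP_A$ is stable under coordinate projection to $0.$
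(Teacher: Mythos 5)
Your proof is correct and follows essentially the same route as the paper: set the $\cG$-coordinates to $\hat 0$ or $\hat b$, observe the resulting pencil, and for $\mfE_A^{\cG}$ conjugate by $P^{-1/2}\otimes I$ (your $M$ is exactly the paper's $P$) to make it monic. The one place you are more explicit than the paper is in verifying $M\succ 0$: the paper simply writes $P^{-1/2}$, while you correctly supply the justification that $b=\psi(0)\in\fP_A[1]$ and that $\fP_A$ is stable under the projections $\pi_{\mfI}$, so $\pi_{\ccG}(b)\in\fP_A[1]$ and hence $M=L_A(\pi_{\ccG}(b))\succ 0$; that is a worthwhile addition but does not change the argument's structure.
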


\begin{proof}
 That both domains are Reinhardt is immediate. It is also
 evident that $(\hat{0},Y)\in \fP_A$ if and only if
\[
 I + \sum_{j\in\ccG} A_j\otimes Y_j 
  +\sum_{j\in\ccG} A_j^*\otimes Y_j^* \succ0,
\]
 and hence 
 $\fP_A^{\cG}$ is a free spectrahedron.

 Let
\[
 P = I+\sum_{j\in \cG} A_j b_j +\sum_{j\in\cG} A_j^*b_j^*,
\]
  and note $Y\in \mfE_A^{\cG}$ if and only if
  $(\hat{b},Y)\in\fP_A$  if and only if 
\[
 P\otimes I
+ \sum_{j\notin\cG} A_j\otimes Y_j 
  +\sum_{j\notin\cG} A_j^*\otimes Y_j^* \succ0.
\]
For $j\in \ccG,$ let $B_j = P^{-\frac 12} A_j P^{-\frac12}$
 and observe  $Y\in \mfE_A^{\cG}$ if and only if
\[
 I + \sum_{j\notin\cG} B_j\otimes Y_j 
  +\sum_{j\notin\cG} B_j^*\otimes Y_j^* \succ0.
\]
 Hence $\mfE_A^{\cG}$ is a free spectrahedron too.
\end{proof}

\subsection{Two auxiliary automorphisms}
  Given  $Y\in \fP_A^{\cG},$ let $T=(\hat{0},Y)\in\fP_A$
and define \index{$\psi^{\cG}$}
\[
 \psi^{\cG}(Y)=(\psi_j(T))_{j\in\ccG}.
\]
 If  $j\in \cG,$  then, since 
 $\pi$ is the identity on $\cG;$
 the automorphism $\psi$ is normalized;
 and  $j\in \fN,$
\[
 \psi_j(T)= \fhm_{b_j}(T_{\pi(j)}) = \fhm_{b_j}(T_j)=\fhm_{b_j}(0)=b_j.
\]
 Thus
\[
 \psi(T)=\psi((\hat{0},Y))
   = (\hat{b},\psi^{\cG}(Y))\in\fP_A.
\]
 It follows that $\psi^{\cG}(Y)\in \mfE_A^{\cG}$ and thus
$\psi^{\cG}$ determines a free mapping
 $\psi^{\cG}:\fP_A^{\cG} \to \mfE_A^{\cG}.$

\begin{lemma}
\label{l:subbi}
 The mapping $\psi^{\cG}:\fP_A^{\cG}\to\mfE_A^{\cG}$ is bianalytic. 
\end{lemma}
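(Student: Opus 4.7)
The plan is to invert $\psi^{\cG}$ directly using the free analytic inverse $\psi^{-1}$ of $\psi$ on $\fP_A$. Given $Z\in \mfE_A^{\cG}$, so that $(\hat{b},Z)\in\fP_A$, I would set $W := \psi^{-1}(\hat{b},Z)\in\fP_A$. The claim to establish is that $W_j = 0$ for every $j\in\cG$; this would let me define $\phi(Z) := (W_j)_{j\in \ccG}$ and recognize $\phi:\mfE_A^{\cG}\to \fP_A^{\cG}$ as a free analytic inverse of $\psi^{\cG}$.

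The key observation underlying the claim, and the only real content of the argument, is that because $\psi$ is \norml and the associated permutation $\pi$ is the identity on $\fN$ (with $\cG = \cF_\psi\subseteq \fN$), for each $j\in \cG$ the coordinate function is
\[
 \psi_j(x) = \fhm_{b_j}(x_j),
\]
a M\"obius transformation in the single variable $x_j$ with $0<b_j<1$. Applying $\psi_j$ to $W = \psi^{-1}(\hat{b}, Z)$ yields $\fhm_{b_j}(W_j) = b_j\,I$, i.e.\ $b_j I + W_j = b_j(I+b_j W_j)$, which simplifies to $(1-b_j^2)W_j=0$ and forces $W_j=0$. Invertibility of $I+b_j W_j$ used in passing is not an issue because $\fP_A\subseteq\mathscr{F}^{\vg}$ gives $\|W_j\|<1$.

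With the claim in hand, $W = (\hat{0}, \phi(Z))$ with $\phi(Z)\in \fP_A^{\cG}$, and $\phi$ is free analytic as the composition of the inclusion $Z\mapsto (\hat{b},Z)$, the free analytic map $\psi^{-1}$, and the projection onto the $\ccG$-coordinates. The two inversion identities are then routine: for $Y\in \fP_A^{\cG}$, the relation $\psi(\hat{0},Y) = (\hat{b},\psi^{\cG}(Y))$ forces $\phi(\psi^{\cG}(Y))=Y$, and conversely $\psi^{\cG}(\phi(Z)) = Z$ follows by applying $\psi$ to $(\hat{0},\phi(Z)) = \psi^{-1}(\hat{b},Z)$. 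There is no genuine obstacle; the entire argument rests on the scalar M\"obius computation above, which is made possible only because the preceding sections arranged for $\psi$ to be \norml with $\pi$ trivial on $\fN$.
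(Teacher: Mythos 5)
Your proposal is correct and rests on the same central computation as the paper's proof: using that $\psi_j(x)=\fhm_{b_j}(x_j)$ with $0<b_j<1$ for $j\in\cG$, one sees that the preimage under $\psi$ of $(\hat{b},Z)$ must have vanishing $\cG$-coordinates. The only cosmetic difference is that you explicitly assemble the inverse $\phi$ as a composition, whereas the paper notes analyticity and injectivity of $\psi^{\cG}$ are immediate and then verifies surjectivity by the same Möbius computation.
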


\begin{proof}
 Evidently $\psi^{\cG}$ is analytic and one-one. It
 remains to show it is onto. To this end, let $Z\in \mfE_A^{\cG}$
 be given. Thus $(\hat{b},Z)\in\fP_A.$ Since $\psi$ is 
 onto, there is a $T\in \fP_A$ such that 
 $\psi(T)=(\hat{b},Z).$  For $j\in \cG\subseteq \fN,$ 
\[
 b_jI = \psi_j(T)=\fhm_{b_j}(T_j)=(b_j+T_j)(I+b_j T_j)^{-1}
\]
 and hence $T_j=0.$  Thus, setting $Y_j=T_j$ for $j\in \ccG,$
 it follows that $T=(\hat{0},Y),$ so that $Y\in \fP_A^{\cG},$
 and  $\psi((\hat{0},Y))=(\hat{b},Z).$ 
 Hence $Y\in \fP_A^{\cG}$ and $\psi^{\cG}(Y)=Z.$
\end{proof}

\begin{lemma}
\label{l:vpfZ}
 For each  $j\notin \cG$ there exists $\gamma_j\in \TT$ such that,
\[
 \psi_j(\hat{0},Y) = \gamma_j Y_{\pi(j)},
\]
  for all  $Y\in \fP_A^{\cG}.$  In particular,  
 for each $Y\in \fP_A^{\cG}$ and $j\notin\cF,$
\[
 \psi^{\cG}_j(Y) =  \gamma_j Y_{\pi(j)}.
\]
\end{lemma}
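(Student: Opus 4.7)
The plan is to split by cases on whether $j\in \fN\setminus \cG$ or $j\in\fZ$, and in the $\fZ$-case to reduce to a Carath\'eodory-Cartan-Kaup-Wu (CCKW) type principle applied to the bianalytic map $\psi^{\cG}$. Recall from Section~\ref{sec:identity} that $\cG=\cF_\psi\subseteq\fN$, that $\pi$ is the identity on $\fN$, and that $\fZ$ is $\pi$-invariant; in particular $\ccG=(\fN\setminus \cG)\cup\fZ$ and $\fZ\subseteq\ccG$.

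For $j\in\fN\setminus \cG$, the normalization of $\psi$ together with $\psi_j(0)=b_j=0$ and $\pi(j)=j$ gives $\psi_j(x)=\fhm_0(x_j)=x_j$, so $\psi_j((\hat 0,Y))=Y_j=Y_{\pi(j)}$, and we take $\gamma_j=1$. For $j\in\fZ$, set $k=\pi(j)\in\fZ\subseteq\ccG$; item~\eqref{i:sumryii} of Proposition~\ref{p:sumry} gives
\[
\psi_j((\hat 0,Y))=e^{i\theta_j}Y_k+\fh_k((\hat 0,Y))
\]
for some $\fh_k\in\fH^k$, whose terms all have order at least two. The task reduces to showing that the remainder $\fh_k((\hat 0,Y))$ vanishes on all $Y\in\fP_A^{\cG}$; then $\gamma_j:=e^{i\theta_j}\in\TT$ will work.

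To force this vanishing I invoke the following CCKW-type principle: any free bianalytic map between two bounded circular free spectrahedra that sends the origin to itself is linear. All the ingredients are in place: Lemma~\ref{l:mfEisaspec} tells us that $\fP_A^{\cG}$ and $\mfE_A^{\cG}$ are Reinhardt (hence circular) free spectrahedra; Lemma~\ref{l:subbi} gives that $\psi^{\cG}$ is bianalytic; and $\psi_j(0)=b_j=0$ for each $j\in\ccG$ yields $\psi^{\cG}(0)=0$. Hence $\psi^{\cG}$ is linear, so each component $\psi^{\cG}_j(Y)=\psi_j((\hat 0,Y))$ is linear in $Y$; matching with the expansion above then forces the higher-order remainder $\fh_k((\hat 0,Y))$ to vanish.

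The main obstacle is the CCKW step itself, since the reference \cite[Theorem~4.4]{proper} cited earlier is stated for automorphisms of a single circular free spectrahedron, whereas $\psi^{\cG}$ maps between two (possibly) distinct ones. The standard remedy is a free analogue of Cartan's uniqueness argument: for each $\theta\in\RR$, the composition $\rho_\theta:=R_{-\theta}\circ (\psi^{\cG})^{-1}\circ R_\theta\circ \psi^{\cG}$, with $R_\theta(Y)=e^{i\theta}Y$, is a free automorphism of $\fP_A^{\cG}$ fixing the origin with derivative equal to the identity, and hence equals the identity by free Cartan uniqueness. This yields $\psi^{\cG}\circ R_\theta=R_\theta\circ \psi^{\cG}$ for every $\theta$, and matching homogeneous Taylor components then forces $\psi^{\cG}$ to be linear, completing the argument.
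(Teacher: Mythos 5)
Your proof is correct and follows essentially the same route as the paper: split by $j\in\fN\setminus\cG$ versus $j\in\fZ$, handle the first case directly from the normalization ($\psi_j(x)=\fhm_0(x_j)=x_j$), and in the second case reduce the vanishing of the remainder $\fh_k(\hat 0,Y)$ to the claim that $\psi^{\cG}:\fP_A^{\cG}\to\mfE_A^{\cG}$ is linear, which is extracted from a CCKW-type theorem for circular domains applied to a bianalytic origin-preserving map. This is precisely the paper's argument.

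The one place where you diverge is in justifying the CCKW step. You observe that \cite[Theorem~4.4]{proper}, as the paper itself describes it in Subsection~\ref{s:prior}, is phrased for automorphisms of a single circular domain, whereas $\psi^{\cG}$ is a bianalytic map between two (a priori distinct) circular free spectrahedra, and you supply the standard Cartan-uniqueness remedy: $\rho_\theta=R_{-\theta}\circ(\psi^{\cG})^{-1}\circ R_\theta\circ\psi^{\cG}$ is an origin-fixing free automorphism of $\fP_A^{\cG}$ with identity derivative, hence the identity, giving $\psi^{\cG}\circ R_\theta=R_\theta\circ\psi^{\cG}$ and thus linearity by comparing homogeneous Taylor terms. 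This concern is reasonable and the repair is correct. The paper itself simply cites \cite[Theorem~4.4]{proper} directly for the two-domain case without comment; your version is more careful. Two small points: you should cite a specific source for the free Cartan uniqueness theorem (the version for free analytic self-maps fixing $0$ with identity derivative appears in the free-analysis literature, e.g.\ in \cite{proper} or can be extracted from \cite{KVV}), and when you invoke Proposition~\ref{p:sumry} item~\eqref{i:sumryii} you write $e^{i\theta_j}$ where the normalization of $\psi$ more directly supplies a unimodular constant $\gamma_j$; either works, but it is cleaner to use the normalized form so the notation matches the statement you are proving.
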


\begin{proof}
  The mapping $\psi^{\cG}:\fP_A^{\cG}\to \mfE_A^{\cG}$ is bianalytic by Lemma~\ref{l:subbi}.
 Moreover, $\psi^{\cG}(0)=0$ since $b_k=0$ for $k\in\ccG.$
 Since,  both $\fP_A^{\cG}$ and $\mfE_A^{\cG}$ 
  are circular  free spectrahedra (Lemma~\ref{l:mfEisaspec})
 and since $\psi^{\cG}(0)=0,$ 
 it follows from \cite[Theorem~4.4]{proper} that
 $\psi^{\cG}$ is linear.

 Now suppose $j\in \fZ.$ Since 
 $\psi$ is normalized, there exists $\gamma_j\in \TT$ 
 and $\fh_{\pi(j)}\in \fH^{\pi(j)}$ such that 
\[
 \psi_j(x) = \gamma_j x_{\pi(j)} + \fh_{\pi(j)}(x).
\]
 Thus, 
 for  $Y\in \fP_A^{\cG},$ 
\[
 \psi_j^{\cG}(Y) = \psi_j(\hat{0},Y)
   =  \gamma_j Y_{\pi(j)} + \fh_{\pi(j)}(\hat{0},Y).
\] 
 By linearity, it
  follows that $\fh_{\pi(j)}(\hat{0},Y)=0$
 for $Y\in \fP_A^{\cG}$ and 
 hence $\psi_j^\cF(Y)=\gamma_j Y_{\pi(j)}.$ 

 If $j\in \fN\cap \ccG$ and $Y\in\fP_A^{\cG},$ 
 then $\psi_j^{\cG}(Y) =\fhm_{b_j}(Y_j)=\fhm_0(Y_j)=Y_j,$
 since $\pi$ is the identity on $\fN.$ 
\end{proof}

Let $\iota$ denote the inclusion of $\mfE^{\cG}$ into $\fP_A^{\cG}$
 and $\newpsi^{\cG} = \iota\circ \psi^{\cG}:\fP_A^{\cG}\to \fP_A^{\cG}.$

\index{$\newpsi^{\cG}$}
\begin{lemma}
 \label{l:metoo}
   The mapping $\newpsi^{\cG}:\fP_A^{\cG}\to\fP_A^{\cG}$
  is bianalytic, and, for $j\notin\cG,$ there exists
 $\gamma_j\in \TT$ such that
\[
 \newpsi^{\cG}_j(T) = \gamma_j T_{\pi(j)}.
\]
\end{lemma}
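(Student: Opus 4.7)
The coordinate formula $\newpsi^{\cG}_j(T) = \gamma_j T_{\pi(j)}$ for $j \notin \cG$ is immediate from Lemma~\ref{l:vpfZ}, because $\iota$ is simply the inclusion $\mfE_A^{\cG} \hookrightarrow \fP_A^{\cG}$ and does not alter coordinate values. The real content of the lemma is the bianalyticity statement. Since $\newpsi^{\cG} = \iota \circ \psi^{\cG}$ and $\psi^{\cG}$ is bianalytic from $\fP_A^{\cG}$ onto $\mfE_A^{\cG}$ by Lemma~\ref{l:subbi}, this will reduce to showing the equality $\mfE_A^{\cG} = \fP_A^{\cG}$.

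My plan is to introduce the linear bijection $L \colon M(\C)^{\ccG} \to M(\C)^{\ccG}$ defined by $L(Y)_j = \gamma_j Y_{\pi(j)}$, so that $\newpsi^{\cG}$ is the restriction of $L$ to $\fP_A^{\cG}$ and $\mfE_A^{\cG} = L(\fP_A^{\cG}) \subseteq \fP_A^{\cG}$. The decisive observation is that $\ccG$ is invariant under $\pi$ (item~\eqref{i:sy3} of Proposition~\ref{p:sumry2}), so $\pi|_{\ccG}$ is a permutation of a finite set and therefore has some finite order $k$. A routine orbit computation then gives $L^k(Y)_j = c_j Y_j$, where each $c_j$ is a product of $k$ elements of $\TT$ and so itself lies in $\TT$. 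Because $\fP_A^{\cG}$ is Reinhardt (Lemma~\ref{l:mfEisaspec}), $L^k$ preserves $\fP_A^{\cG}$, and combining this with the obvious monotonicity $L^{i+1}(\fP_A^{\cG}) \subseteq L^i(\fP_A^{\cG})$ (obtained by applying $L$ to $\mfE_A^{\cG} \subseteq \fP_A^{\cG}$) yields the chain
\[
 \fP_A^{\cG} = L^k(\fP_A^{\cG}) \subseteq L^{k-1}(\fP_A^{\cG}) \subseteq \cdots \subseteq L(\fP_A^{\cG}) = \mfE_A^{\cG} \subseteq \fP_A^{\cG},
\]
which forces equality throughout; in particular $\mfE_A^{\cG} = \fP_A^{\cG}$.

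Once this equality is in hand, $\newpsi^{\cG}$ is nothing but $\psi^{\cG}$ viewed as a self-map of $\fP_A^{\cG}$, so bianalyticity follows at once from Lemma~\ref{l:subbi}. The one point requiring care is confirming that $L^k$ really does act as a coordinatewise unimodular scaling rather than by some off-diagonal mixing; this amounts to tracking the orbits of $\pi$ on $\ccG$ and is straightforward once the $\pi$-invariance of $\ccG$ is noted. Put differently, the whole argument is a leverage of the Reinhardt symmetry of $\fP_A^{\cG}$ against the fact that the apparent contraction $L$ has finite order modulo that symmetry.
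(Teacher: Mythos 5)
Your argument is correct and is essentially the paper's: both proofs exploit that $\ccG$ is $\pi$-invariant so that some power of the coordinate map $Y_j \mapsto \gamma_j Y_{\pi(j)}$ becomes a unimodular diagonal scaling, which the Reinhardt property of $\fP_A^{\cG}$ (Lemma~\ref{l:mfEisaspec}) shows is an automorphism, from which surjectivity follows. You phrase the endgame as a descending chain of set inclusions $\fP_A^{\cG} = L^k(\fP_A^{\cG}) \subseteq \cdots \subseteq L(\fP_A^{\cG}) = \mfE_A^{\cG} \subseteq \fP_A^{\cG}$ forcing $\mfE_A^{\cG} = \fP_A^{\cG}$, whereas the paper simply notes that $\newpsi^{(n)}$ is onto and hence so is $\newpsi$; these are the same observation in slightly different dress.
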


\begin{proof}
For notational ease, let $\newpsi=\newpsi^{\cG}.$ Since
 both $\iota$ and $\psi^{\cG}$ are free analytic
 and injective, so is $\newpsi.$ It remains
 to prove $\newpsi$ is onto.

For each $j\notin \cF,$ there exist $\gamma_j\in \TT$ such that,
if $Y\in \fP_A^{\cG},$ then  
\[
 \newpsi_j(Y) = \psi^{\cG}_j(Y) =\gamma_j  Y_{\pi(j)}
\]
 by Lemma~\ref{l:vpfZ}. 
 Since, by Lemma~\ref{p:sumry2}, $\ccG$ is invariant for $\pi,$ there is a positive integer
$n$ such that $\pi^{(n)}$ is the identity on $\ccG$
($\pi$ was already the identity on $\cN\supseteq \cF$).
Thus, for $j\notin \cF,$ there exists $\delta_j\in \TT$ such that 
\[
\newpsi^{(n)}_j(Y) = \delta_j Y_j.
\]
Since the mapping $Y\mapsto \delta\bcdot Y,$ where $(\delta\bcdot Y)_j=\delta_jY_j,$
 is an automorphism of $\fP_A$ (Lemma~\ref{l:mfEisaspec})
  it follows that 
 $\newpsi^{(n)}$ is onto and thus so is $\newpsi.$ 
\end{proof}

\subsection{How to spot a polydisc}
\label{s:fI}
Recall the notation $(X,Y)\in \fP_A$ from the
first paragraph of subsection~\ref{s:aux}. 
Given $Y\in M_n(\C)^{\cG}$  and $\lambda\in \CC,$ 
let $Y^\lambda =(\fI^\lambda,Y) \in M_n(\C)^{\vg},$
 where $\fI^\lambda_j=\lambda I_n$ for $j\in \cF.$
\index{$\fI^\lambda$}

\begin{lemma}
 \label{l:somesum}
 If  $Y^\lambda \in \fP_A^{\cG}$  for each
  $0\le \lambda <1$ and 
   $Y \in \fP_A^{\cG},$ then   $\fP_A$ contains a polydisc
 as a distinguished summand.
\end{lemma}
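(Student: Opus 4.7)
The plan is to show that $\cG$ itself serves as the index set $\mfI$ in the definition of ``contains a polydisc as a distinguished summand.'' Note $\cG = \cF = \cF_\psi$ (via the \texttt{\textbackslash cG} macro), and by the standing assumption of Section~\ref{sec:identity}, $\cF_\psi$ is nonempty, so $\cG$ is a legal candidate for $\mfI$.

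I will read the hypothesis in its universally-quantified form: for every $Y \in \fP_A^\cG$ and every $\lambda \in [0,1)$, the tuple $Y^\lambda = (\fI^\lambda, Y)$ lies in $\fP_A$ (treating the ``$\fP_A^\cG$'' inside the hypothesis as a typo for ``$\fP_A$''; note that $Y^\lambda$ has $\vg$ coordinates, while elements of $\fP_A^\cG$ have only $\vg - |\cG|$). Under this reading the hypothesis is exactly the premise of Lemma~\ref{l:KR++} with $\cI = \cG$ and constant weights $\lambda_j = \lambda$.

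With this in hand, the verification of the distinguished-summand condition is a direct application. Fix any $T \in M_n(\C)^\vg$ satisfying $\|T_j\| < 1$ for $j \in \cG$ and $\pi_\cG(T) \in \fP_A$. Decompose $T = (S, Y)$ with $S = (T_j)_{j\in\cG}$ and $Y = (T_j)_{j\notin\cG}$; then $\pi_\cG(T) \in \fP_A$ is precisely $(\hat{0}, Y) \in \fP_A$, i.e., $Y \in \fP_A^\cG$. Set $\mu = \max_{j\in\cG}\|S_j\| < 1$ and pick $\lambda \in (\mu, 1)$. Applying Lemma~\ref{l:KR++} to $Y$ and the contractions $S'_j := S_j/\lambda$ (which satisfy $\|S'_j\| \le \mu/\lambda < 1$) produces a tuple in $\fP_A$ whose $\cG$-coordinates are $\lambda S'_j = S_j$ and whose $\ccG$-coordinates are $Y_j$. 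But this tuple is exactly $T$, so $T \in \fP_A$, as required.

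The only real obstacle is parsing the hypothesis correctly; once the ``$\fP_A^\cG$'' typo is resolved and the quantification over $Y$ is taken to be universal, the proof is essentially a one-line invocation of Lemma~\ref{l:KR++} combined with a trivial scaling to convert the strict bound $\|T_j\|<1$ into an honest contraction of the form $\lambda\,(T_j/\lambda)$ with $\|T_j/\lambda\|\le 1$.
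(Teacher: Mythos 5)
Your proof is correct and follows essentially the same route as the paper's: you apply Lemma~\ref{l:KR++} with $\cI = \cG$ and recover the distinguished-summand condition by the rescaling $S_j = \lambda\,(S_j/\lambda)$. You also correctly diagnose the typographical slip in the hypothesis ($Y^\lambda\in\fP_A^{\cG}$ should read $Y^\lambda\in\fP_A$, since $Y^\lambda$ has $\vg$ coordinates) and identify the universal quantification over $Y$, both of which match the way the paper actually uses the lemma; the paper's written proof simply leaves the final rescaling step implicit where you spell it out.
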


\begin{proof}
 Let $T=(X,Y)$ such that $(\hat{0},Y)\in \fP_A$ and $\|X_j\|\le 1$
 for $j\in \cG$ be given. Thus $Y\in \fP_A^{\cG}.$ 
 Given $0\le \lambda <1,$ by assumption $Y^\lambda\in \fP_A.$
 An application of Lemma~\ref{l:KR++} implies
 $(\lambda X,Y)\in \fP_A.$ 
\end{proof}

\section{Proof of Theorem~\ref{t:main}}
 It remains to prove items~\eqref{i:main1} and \eqref{i:main3}
 of Theorem~\ref{t:main},
 since item~\eqref{i:main2} was already established
 as Proposition~\ref{p:main2}.

\subsection{Proof of item~\eqref{i:main1}}

\begin{proposition}
 \label{p:main}
 If $\fP_A$ does not contain a polydisc as a distinguished
  summand and if  $\varphi$ is an automorphism of  $\fP_A,$ then
 $\varphi(0)=0.$
\end{proposition}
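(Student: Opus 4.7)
I would prove the contrapositive: if $\varphi(0) \ne 0$, then $\fP_A$ contains a polydisc as a distinguished summand. First, apply Lemma~\ref{l:make+} to produce a normalized automorphism whose $\cF$-set equals $\cF_\varphi$, and then Lemma~\ref{p:sumry2} to pass to a further iterate $\psi$ that remains normalized, whose associated permutation $\pi$ is the identity on $\fN$, and whose nonempty set $\cF := \cF_\psi$ is iteration-stable, i.e.\ $\cF_{\psi^{(m)}} = \cF$ for all $m \ge 1$. By Proposition~\ref{p:sumry}, $\cF \subseteq \fN$, so for each $j \in \cF$ we have $\psi_j(x) = \fhm_{b_j}(x_j)$ with $b_j = \psi_j(0) \in (0,1)$. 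This places us precisely in the setting of Section~\ref{sec:identity}.

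The crux of the argument is to promote the \emph{a priori} inclusion $\mfE_A^\cF \subseteq \fP_A^\cF$ to an equality. Lemma~\ref{l:subbi} gives that $\psi^\cF : \fP_A^\cF \to \mfE_A^\cF$ is bianalytic, and Lemma~\ref{l:metoo} gives that the post-composition $\newpsi^\cF = \iota \circ \psi^\cF : \fP_A^\cF \to \fP_A^\cF$ is bianalytic, hence surjective. Surjectivity forces $\iota(\mfE_A^\cF) = \fP_A^\cF$, so $\mfE_A^\cF = \fP_A^\cF$; equivalently, $(\hat b, Y) \in \fP_A$ for every $Y \in \fP_A^\cF$. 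The identical argument applied to each iterate $\psi^{(m)}$ (still normalized, with $\pi^{(m)}$ identity on $\fN$ and $\cF_{\psi^{(m)}} = \cF$) upgrades $\hat b$ to $\hat b^{(m)}$, where $b_j^{(m)} = \fhm_{b_j}^{(m)}(0)$ is the $m$th iterate of the disc automorphism $\fhm_{b_j}$ at $0$. Thus $(\hat b^{(m)}, Y) \in \fP_A$ for every $Y \in \fP_A^\cF$ and every $m$.

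Since $\fhm_{b_j}$ is a disc automorphism with $+1$ as an attracting fixed point (because $b_j \in (0,1)$), the sequence $b_j^{(m)}$ converges to $1$ as $m \to \infty$. I would then invoke Lemma~\ref{l:KR++} with $\cI = \cF$ and $\lambda_j = b_j^{(m)}$: its hypothesis is precisely the assertion that $(\hat b^{(m)}, Y) \in \fP_A$ for every $Y \in \fP_A^\cF$, and its conclusion is that for any such $Y$ and any contractions $T_j$ ($j \in \cF$), the tuple $Z$ with $Z_j = b_j^{(m)} T_j$ on $\cF$ and $Z_j = Y_j$ off $\cF$ lies in $\fP_A$. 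Given any $T \in M(\C)^\vg$ with $\|T_j\| < 1$ for $j \in \cF$ and $\pi_\cF(T) \in \fP_A$, choose $m$ large enough that $b_j^{(m)} > \|T_j\|$ for every $j \in \cF$ and apply the conclusion with the contractions $T_j / b_j^{(m)}$: the output is exactly $T$, so $T \in \fP_A$. This verifies the definition of $\fP_A$ containing a polydisc in the $\cF$-variables as a distinguished summand, yielding the desired contradiction.

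The main obstacle is the rigidity step in the second paragraph: recognizing that the bianalyticity of $\newpsi^\cF$ (not merely its injectivity) forces the equality $\mfE_A^\cF = \fP_A^\cF$, thereby converting analytic rigidity into the geometric statement that placing $\hat b$, and hence $\hat b^{(m)}$, on the $\cF$-coordinates keeps one inside $\fP_A$. Once this rigidity is in hand, the extraction of the polydisc summand via the disc-automorphism iterates and Lemma~\ref{l:KR++} is a routine amplification.
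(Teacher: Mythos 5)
Your proposal is correct and follows essentially the same route as the paper: pass to a normalized iterate $\psi$ via Lemma~\ref{l:make+} and Lemma~\ref{p:sumry2}, extract the rigidity statement $\mfE_A^\cF = \fP_A^\cF$ (and its analogue for each iterate) from the bianalyticity of $\newpsi^\cF$ provided by Lemma~\ref{l:metoo}, push the scalars $b_j^{(m)} = \fhm_{b_j}^{(m)}(0)$ to $1$, and amplify via the hyper-Reinhardt lemma. The only cosmetic difference is at the end: the paper fixes $\lambda<1$ first, packages the conclusion as $(\fI^\lambda,Y)\in\fP_A$ for all $Y$, and finishes by quoting Lemma~\ref{l:somesum}, whereas you invoke Lemma~\ref{l:KR++} directly and choose $m$ after being handed the candidate tuple $T$ with $\|T_j\|<1$ on $\cF$; your phrasing sidesteps the implicit radial-monotonicity step (from $c_j\ge\lambda$ to $\fI^\lambda$) that the paper leaves to the reader, but the underlying mechanism (Lemma~\ref{l:KR++}) is identical.
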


\begin{proof}
 Arguing the contrapositive, suppose there is an automorphism
 of $\varphi$ with $\varphi(0)\ne 0.$ By Lemma~\ref{l:make+},
 we may assume $\varphi$ is normalized. By Lemma~\ref{p:sumry2},
 there is an automorphism  
 $\psi$ satisfying the conclusion of Lemma~\ref{p:sumry}
 and such that $\varnothing \ne \cF_\varphi \subseteq \cF_{\psi}.$ 
 In particular, for each positive integer $m,$ the automorphism
  $\psi^{(m)}$ is normalized; its associated permutation
 is the identity on $\fN;$ and
  $\varnothing \ne \cF_\varphi \subseteq \cF_\psi =\cF_{\psi^{(m)}}.$
 
 For notational ease, let $\cF=\cF_\psi.$
 Fix $j\in \cF.$ Since  $\psi$ is
 normalized;  $j\in \fN;$ and the permutation associated
 to $\psi$  is the identity on $\fN,$
\begin{equation*}
 \psi_j(x)= \fhm_{b_j}(x_j)=\frac{b_j+x_j}{1+b_j x_j},
\end{equation*}
 with $b_j> 0.$
 Now suppose $m$ is a positive integer and
  $\psi_j^{(m)}(0)=\fhm_{b_j}^{(m)}(0).$
 Since the permutation associated to both $\psi^{(m)}$ and $\psi$
 is 
 the identity on $\fN,$ an application of Lemma~\ref{l:piplayswell}
 gives
\[
 \psi_j^{(m+1)}(0)= \psi_j(\psi^{(m)}(0)) =
  \fhm_{b_j}(\varphi_j^{(m)}(0)) = \fhm_{b_j}(\fhm_{b_j}^{(m)}(0)).
\]
 Thus, by induction, $\varphi_j^{(m)}(0) = \fhm_{b_j}^{(m)}(0)$
 for all $m.$

 It is easily seen that the sequence $(\fhm_{b_j}^{(m)}(0))_m$
 converges to $1.$ (It is an increasing
 sequence from $(0,1)$ that cannot converge to an $L<1.$)
  Hence, given $0<\lambda <1,$ there is
 an $m$ such that 
\[
 \psi_j^{(m)}(0) \ge \lambda
\]
 for each $j\in\cF.$ Let $\rho=\psi^{(m)}.$ In particular,
 $\rho$ is a \norml automorphism, the permutation $\kappa$
 associated to $\rho$ is the identity on $\fN\supseteq \cF$
 and $\cF_\rho=\cF\ne \varnothing.$ 

 By Lemma~\ref{l:metoo}, given $Y\in \fP_A^{\cG},$ there exists
 a $Z\in \fP_A^{\cG}$ such that $\rho^{\cG}(Z)=Y.$ It follows
 that
\[
 \rho(\hat{0},Z)= (\hat{c},\rho^{\cG}(Z))=(\hat{c},Y),
\]
 where $c_j=\rho_j(0)\ge \lambda$ for $j\in \cF.$
 In particular, $(\fI^\lambda,Y)\in \fP_A,$
 where $\fI^\lambda$ is defined at the 
 outset of Subsection~\ref{s:fI}.  Thus,  by 
 Lemma~\ref{l:somesum}, $\fP_A$ contains a polydisc
 as a distinguished summand.
\end{proof}

\subsection{Proof of item~\eqref{i:main3}}

 Item~\eqref{i:main3} of Theorem~\ref{t:main} follows
 from Proposition~\ref{p:nopi} below.

\begin{proposition}
 \label{p:nopi}
 If there exists non-identity 
 permutation  $\rho$ of $\{1,2,\dots,\vg\}$ such that
 the mapping $\psi_j(x)=x_{\rho(j)}$ is an automorphism 
 of $\fP_A,$ then $\fP_A$ is a coordinate direct
 sum.
\end{proposition}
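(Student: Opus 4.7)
The plan is to use the shift relations of Proposition~\ref{p:sumry} together with matrix-valued test tuples to force an algebraic identity $C_\nu C_{\nu+1}=0$ for some $1\le\nu<\vg$, and then to convert this identity into a coordinate direct sum by block-decomposing $L_A$. First I would apply Proposition~\ref{p:sumry} to the linear automorphism $\psi$: since $\psi(0)=0$ and each $\psi_j$ is already a coordinate projection, the associated permutation is $\rho$ itself and Lemma~\ref{l:if+} yields $\rho(j+1)=\rho(j)+1$ on $\fZ^+$ and $\rho(j-1)=\rho(j)-1$ on $\fZ^-$, with $\fN$, $\fZ^+$, $\fZ^-$ all $\rho$-invariant. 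Calling $j$ and $j+1$ \emph{linked} if $j\in\fZ^+$ or $j+1\in\fZ^-$, the maximal linked intervals partition $\{1,\dots,\vg\}$ into sub-intervals that $\rho$ permutes while acting as a translation on each. If the partition consisted of a single interval, $\rho$ would be a translation of all of $\{1,\dots,\vg\}$, hence the identity; nontriviality of $\rho$ therefore forces at least two intervals.

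Next, choose $j_0$ to be the smallest index with $\rho(j_0)\ne j_0$. The fixed-prefix property gives $\rho(j_0)>j_0$ and $m:=\rho^{-1}(j_0)>j_0$. Consider test tuples $T\in M_n(\CC)^\vg$ with $T_{j_0-1}=e$, $T_{j_0}=f$, and (when needed) a remote entry $T_m=g$, all other entries zero. Direct computation shows that $L_A(T)\succ 0$ and $L_A(\psi(T))\succ 0$ each decouple into a two-variable sub-spectrahedron for the chain $(C_{j_0-1},C_{j_0})$ --- evaluated at $(e,f)$ for $T$ and at a $\rho$-permuted pair for $\psi(T)$ --- together with a ``remote'' factor involving $g$. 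Specializing $g=0$ in the automorphism identity $T\in\fP_A\Leftrightarrow\psi(T)\in\fP_A$ collapses the condition to the statement that the two-variable sub-spectrahedron associated to $(C_{j_0-1},C_{j_0})$ equals the free bidisc $\{\|e\|<1,\|f\|<1\}$. A Schur-complement analysis of the $3\times 3$ LMI for this two-variable sub-chain, in the spirit of Lemma~\ref{l:fZ+}, shows that this occurs exactly when $C_{j_0-1}C_{j_0}=0$. Edge situations ($j_0=1$, $m$ close to $j_0$, or short cycles like $(1,3)$) require enlarging the test tuple to three non-zero entries and picking a different specialization, but each reduces to the same bidisc characterization and so yields some index $\nu$ with $C_\nu C_{\nu+1}=0$.

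Finally, convert $C_\nu C_{\nu+1}=0$ into a coordinate direct sum. Since $\operatorname{range}(C_{\nu+1})\subseteq\ker C_\nu$, decompose $\CC^{d_{\nu+1}}$ as $(\ker C_\nu)^\perp\oplus\ker C_\nu$. In this decomposition, the off-diagonal contribution $T_\nu\otimes C_\nu$ at block position $(\nu,\nu+1)$ of $L_A(T)$ and the contribution $T_{\nu+1}\otimes C_{\nu+1}$ at block position $(\nu+1,\nu+2)$ land in orthogonal subspaces of block-row $\nu+1$. After a unitary change of basis in that row, $L_A(T)$ block-diagonalizes into a head LMI in $T_1,\dots,T_\nu$ (using chain $C_1,\dots,C_\nu$) and a tail LMI in $T_{\nu+1},\dots,T_\vg$ (using chain $C_{\nu+1},\dots,C_\vg$). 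Consequently $L_A(T)\succ 0$ if and only if both pieces are, which exhibits $\fP_A$ as a coordinate direct sum at $\nu$. The main obstacle will be the case analysis in the middle stage: short orbits of $\rho$ colliding with the chain boundary demand carefully chosen test tuples to isolate a two-variable sub-chain on which the bidisc characterization applies.
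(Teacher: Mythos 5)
Your proposal diverges from the paper's proof. The paper does not extract any algebraic identity on the matrices $C_j$. Instead it takes $X$ with $(X_1,\dots,X_\mu,0,\dots,0)$ and $(0,\dots,0,X_{\mu+1},\dots,X_\vg)$ both in $\fP_A$, builds a rank-one tensored tuple $Z$ in a doubled space via the units $\ee_1,\ee_2$ of $M_2(\C)$, proves $Z\in\fP_A$ using the hyper-Reinhardt structure (Lemma~\ref{l:nopi+}, which rests on Lemma~\ref{l:U's}), applies $\psi$, and unwinds with Lemma~\ref{l:nopi-} to get $\psi(X)\in\fP_A$ and hence $X\in\fP_A$. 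No $\fZ^\pm$ bookkeeping and no reduction to a two-variable sub-chain is needed.

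There is a genuine gap in your middle stage, and your intermediate target is in fact too strong. The bidisc characterization for the sub-chain $(C_\nu,C_{\nu+1})$ does \emph{not} force $C_\nu C_{\nu+1}=0$. Consider $C_\nu = \operatorname{diag}(1,1/\sqrt{2},0)$ and $C_{\nu+1}=\operatorname{diag}(0,1/\sqrt{2},1)$, each of norm one, so $C_\nu^*C_\nu=\operatorname{diag}(1,\tfrac12,0)$ and $C_{\nu+1}C_{\nu+1}^*=\operatorname{diag}(0,\tfrac12,1)$ and $C_\nu C_{\nu+1}=\operatorname{diag}(0,\tfrac12,0)\ne 0$. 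The Schur-complement reduction of the $3\times 3$ block LMI (exactly the computation in Lemma~\ref{l:fZ+}) shows $L_A(T)\succ 0$ if and only if $C_\nu^*C_\nu\otimes T_\nu^*T_\nu + C_{\nu+1}C_{\nu+1}^*\otimes T_{\nu+1}T_{\nu+1}^*\prec I$, which here decouples coordinatewise into $T_\nu^*T_\nu\prec I$, $\tfrac12 T_\nu^*T_\nu + \tfrac12 T_{\nu+1}T_{\nu+1}^*\prec I$, and $T_{\nu+1}T_{\nu+1}^*\prec I$; the middle condition is implied by the outer two, and the result is precisely the free bidisc $\{\|T_\nu\|<1,\|T_{\nu+1}\|<1\}$. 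So the Schur analysis you invoke cannot deliver $C_\nu C_{\nu+1}=0$, and your final block-diagonalization (which does require $C_\nu C_{\nu+1}=0$, i.e.\ $\operatorname{range}(C_{\nu+1})\subseteq\ker C_\nu$) would fail for such chains. Note also that you never actually derive the bidisc characterization from the permutation automorphism; you reduce it to the case analysis that you yourself flag as ``the main obstacle.'' The moral: ``coordinate direct sum'' in the sense of the paper is a statement about the free sets, not about the pencil coefficients, and the paper's Lemmas~\ref{l:nopi+} and \ref{l:nopi-} test it directly without ever producing an algebraic relation among the $C_j$.
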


 Before proving Proposition~\ref{p:nopi}, we present
 two lemmas.
 Let $\{\ee_1,\ee_2\}$ denote the standard basis for
 $\C^2.$ Thus $\ee_j \ee_k^*$ are the usual matrix units
 for $M_2(\C).$

\begin{lemma}
 \label{l:nopi+}
   Fix $1\le \mu \le \vg-1$ and let $X\in M_n(\C)^{\vg}$
 be given. 
 If  $(X_1,\dots,X_{\mu},0,\dots,0),$ and $(0,\dots,0,X_{\mu+1},\dots,X_{\vg})$ are both
 in $\fP_A$ and if  $\ell_j\in \{1,2\}$
 for $j>\mu+1,$ then  the tuple $Z$ defined by
\begin{enumerate}[(a)]\itemsep=7pt
 \item  $Z_j=\ee_1\ee_1^* \otimes X_j$ for  $1\le j<\mu;$
 \item $Z_\mu=\ee_1\ee_2^* \otimes X_\mu;$
 \item $Z_{\mu+1} = \ee_1\ee_1^* \otimes  X_{\mu+1};$
 \item $Z_j=\ee_{\ell_j} \ee_{\ell_j}^* \otimes X_j$ for $j>\mu+1,$
\end{enumerate}
 is in $\fP_A.$
\end{lemma}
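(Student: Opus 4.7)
The plan is to show $L_A(Z)\succ 0$ directly, by exhibiting a permutation of basis vectors that block-diagonalizes $L_A(Z)$ into summands each of which is either a principal submatrix of $L_A(X^{(1)})$, a principal submatrix of $L_A(X^{(2)})$, or an identity block, where $X^{(1)}=(X_1,\ldots,X_\mu,0,\ldots,0)$ and $X^{(2)}=(0,\ldots,0,X_{\mu+1},\ldots,X_\vg)$. Both of these tuples lie in $\fP_A$ by hypothesis, so $L_A(X^{(1)}),L_A(X^{(2)})\succ 0$, and principal submatrices of positive definite matrices are positive definite; the conclusion $Z\in\fP_A$ then follows.

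To implement this, I would regard $L_A(Z)\in M_d(\C)\otimes M_2(\C)\otimes M_n(\C)$ as a block matrix indexed by pairs $(k,p)\in\{1,\ldots,\vg+1\}\times\{1,2\}$, the $(k,p)$-block having size $d_kn$. Writing $Z_j=\ee_{p_j}\ee_{q_j}^*\otimes X_j$ with $(p_j,q_j)$ read off from the definition of $Z$, the only nonzero off-diagonal entries of $L_A(Z)$ are $C_j\otimes X_j$ in position $((j,p_j),(j+1,q_j))$ together with its adjoint in the transposed position, for each $j=1,\ldots,\vg$. This sparsity pattern is encoded by a graph on the vertex set $\{(k,p)\}$ whose connected components prescribe the block structure of $L_A(Z)$.

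The connected components can be read off explicitly. Chain~$1$ is the path $(1,1)-(2,1)-\cdots-(\mu,1)-(\mu+1,2)$, coming from the edges $j=1,\ldots,\mu$. In the range $k\ge\mu+1$, with the convention $\ell_{\mu+1}:=1$ (since $p_{\mu+1}=q_{\mu+1}=1$), the maximal constant-$\ell$ runs among $\ell_{\mu+1},\ell_{\mu+2},\ldots,\ell_\vg$ yield chains within the corresponding $p$-row; any $(k,p)$ not appearing in a chain is isolated and contributes an identity block. The essential verification is that the block matrix attached to Chain~$1$ is exactly the principal submatrix of $L_A(X^{(1)})$ on rows $\{1,\ldots,\mu+1\}$ (identity diagonal, with $C_j\otimes X_j$ off-diagonals for $j=1,\ldots,\mu$), and that each chain in the range $k\ge\mu+1$ is a principal submatrix of $L_A(X^{(2)})$ on the corresponding row subset of $\{\mu+1,\ldots,\vg+1\}$.

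Putting it all together, a permutation unitary $V$ produces $V^*L_A(Z)V=\bigoplus_i M_i$, where each $M_i$ is a principal submatrix of $L_A(X^{(1)})$, a principal submatrix of $L_A(X^{(2)})$, or an identity matrix. Each summand is strictly positive definite, hence so is $L_A(Z)$, giving $Z\in\fP_A$. The main obstacle is the combinatorial bookkeeping of the preceding paragraph, specifically confirming that the chains partition the vertex set correctly and that each chain's sub-matrix really does coincide with the appropriate principal submatrix of $L_A(X^{(i)})$; once this matching is in hand the positivity conclusion is immediate. As a side remark, one can also recast the argument via Lemma~\ref{l:U's}: the tuple $X^{(1)}\oplus X^{(2)}$ belongs to $\fP_A$, and a careful choice of unitary tuple $W$ via the $W\AST\,\cdot\,$ construction realizes $Z$ in the all-$\ell_j=1$ case, but this route does not obviously cover mixed $\ell_j$ patterns and the direct block-decomposition approach is cleaner.
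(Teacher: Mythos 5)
Your proposal is correct, and the core mechanism — expose the sparsity pattern of $L_A(Z)$ as a graph on the $(k,p)$ vertices, identify each connected component with a contiguous principal submatrix of $L_A(X^{(1)})$ or $L_A(X^{(2)})$ (or an identity block), and conclude positive definiteness — is exactly the engine driving the paper's argument. The difference is one of packaging. The paper first considers the intermediate tuple $Y$ with $Y_\mu=\ee_1\ee_1^*\otimes X_\mu$, $Y_{\mu+1}=\ee_2\ee_2^*\otimes X_{\mu+1}$, and $Y_j=I_2\otimes X_j$ for all other $j$ (morally $X^{(1)}\oplus X^{(2)}$, but with $I_2\otimes X_j$ rather than $\ee_1\ee_1^*\otimes X_j$), shows $L_A(Y)$ block-diagonalizes into the $L_{s,t}$ pieces, then applies Lemma~\ref{l:U's} with a tuple of swap unitaries $W$ to land at $W\AST Y$, and only then asserts that $Z\in\fP_A$ "readily follows." That last step amounts to observing that $L_A(Z)$ differs from $L_A(W\AST Y)$ by deleting a subset of the off-diagonal edges, which only refines the block decomposition into smaller principal submatrices — precisely the bookkeeping you carry out directly on $L_A(Z)$ from the start. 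Your route dispenses with $Y$ and $W\AST$ altogether and handles the arbitrary pattern of $\ell_j$ in one pass, which is cleaner; what it forgoes is the illustration of the hyper-Reinhardt mechanism (Lemma~\ref{l:U's}) in action, which the author may have wanted to foreground. One small inaccuracy in your closing remark: even in the all-$\ell_j=1$ case, $W\AST Y$ is not literally $Z$ (it has $I_2\otimes X_j$ rather than $\ee_1\ee_1^*\otimes X_j$ away from $\mu,\mu+1$), so the "$W\AST$ realizes $Z$" claim overstates what that route gives directly; this does not affect your main argument, which stands on its own.
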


 \begin{proof}
Let 
\begin{enumerate}[(a)] \itemsep=7pt
\item $Y_\mu = \ee_1\ee_1^* \otimes X_\mu;$ 
 \item $Y_{\mu+1} = \ee_2\ee_2^* \otimes X_{\mu+1};$ and
 \item $Y_j= I_2\otimes X_j$ for $\mu+1\ne j \ne \mu.$
\end{enumerate}
 By hypothesis, the matrices
\begin{equation*}
L_{s,t} \begin{pmatrix} I & C_s\otimes X_s & 0 & 0 & \dots &0 &0 \\
        C_s^*\otimes T_1^* & I & C_{s+1}\otimes X_{s+1} & 0 & \dots &0 & 0 \\
        0 & C_{s+1}^*\otimes X_{s+1}^*  & I & C_{s+2}\otimes X_{s+2} & \dots &0 & 0 \\
       0&0& C_{s+2}^*\otimes X_{s+2}^*&I&\dots &0&0\\
        \vdots &\vdots&\vdots& \vdots& \dots &  \vdots \\
       0&0&0&0 &\dots & I & C_{t}\otimes T_t \\
       0&0&0&0 &\dots &C_t^* \otimes T_t^* &  I 
 \end{pmatrix}
\end{equation*}
 are positive semidefinite for $(s,t)\in \{(1,\mu),(\mu+1,\vg), (\mu+2,\vg).$
 Since $L_A(Y)$ is a direct sum of these matrices with an identity matrix
 (of an appropriate size),  $L_A(Y)\succeq0.$ Equivalently, 
 $Y\in \fP_A.$ 

Let 
\[
 W_j = \begin{pmatrix} 0&I\\I&0 \end{pmatrix}
\]
for $j\ge \mu$ and $W_j=I$ for $0\le j< \mu$ and observe 
 that, in the notation of equation~\eqref{d:AST}, 
\begin{enumerate}[(i)]
 \item  $(W\AST Y)_\mu= \ee_1 \ee_2^* \otimes X_\mu;$
 \item  $(W\AST Y)_{\mu+1} = \ee_1\ee_1^* \otimes X_{\mu+1};$ and
 \item  $(W\AST Y)_j=I_2\otimes Y_j$ otherwise.
\end{enumerate}
 By Lemma~\ref{l:U's}, $W\AST Y\in \fP_A,$ from which it readily follows
 that $Z\in \fP_A.$
\end{proof}

\begin{lemma}
 \label{l:nopi-}
  Fix $2\le \nu \le \vg.$ 
   Given a tuple $T\in M(\C)^{\vg},$ if the   tuple 
 $Z_j$ defined by 
\begin{enumerate}[(i)]
 \item  $Z_j=\ee_1\ee_1^* \otimes T_j$ for $j<\nu;$
 \item $Z_\nu = \ee_1 \ee_2^*\otimes T_\nu$  for $j=\nu;$ and
 \item $Z_j= \ee_2 \ee_2^* \otimes T_j$ for $j>\nu$
\end{enumerate}
 is in $\fP_A,$ then $T\in\fP_A.$  
\end{lemma}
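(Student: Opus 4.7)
The plan is to extract $L_A(T)$ as the compression of $L_A(Z)$ to a carefully chosen subspace, so that $L_A(Z)\succ 0$ forces $L_A(T)\succ 0$ and hence $T\in\fP_A$.

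Writing $\CC^{2n}=\CC^2\otimes\CC^n$, the hypothesis on $Z$ says precisely that $Z_j=M_j\otimes T_j$ with $M_1=\cdots=M_{\nu-1}=\ee_1\ee_1^*$, $M_\nu=\ee_1\ee_2^*$, and $M_{\nu+1}=\cdots=M_{\vg}=\ee_2\ee_2^*$. Thus $L_A(Z)$ acts on $H=\bigoplus_{k=1}^{\vg+1}\bigl(\CC^{d_k}\otimes\CC^2\otimes\CC^n\bigr)$. Within $H$, I would define the subspace
\[
V=\biggl(\bigoplus_{k=1}^{\nu}(\CC^{d_k}\otimes\ee_1\otimes\CC^n)\biggr)\oplus\biggl(\bigoplus_{k=\nu+1}^{\vg+1}(\CC^{d_k}\otimes\ee_2\otimes\CC^n)\biggr),
\]
which is isometric to $\CC^d\otimes\CC^n$ via the map that strips the $\ee_1$ or $\ee_2$ factor from each summand.

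Next I would verify two claims. First, $V$ is invariant under $\Lambda_A(Z)=\sum_j A_j\otimes Z_j$ (and hence under the self-adjoint $L_A(Z)$). Since the only off-diagonal blocks of $\Lambda_A(Z)$ are the maps $C_k\otimes Z_k\colon\CC^{d_{k+1}}\otimes\CC^2\otimes\CC^n\to\CC^{d_k}\otimes\CC^2\otimes\CC^n$, this reduces to a three-case check: for $k<\nu$ one has $Z_k(\ee_1\otimes v)=\ee_1\otimes T_k v$, carrying $V_{k+1}\to V_k$; for $k=\nu$ the bridge $Z_\nu(\ee_2\otimes v)=\ee_1\otimes T_\nu v$ switches from the $\ee_2$-convention in block $\nu{+}1$ back to the $\ee_1$-convention in block $\nu$; and for $k>\nu$ the identity $Z_k(\ee_2\otimes v)=\ee_2\otimes T_k v$ preserves the $\ee_2$-convention. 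Second, the very same case analysis shows that under the isomorphism $V\cong\CC^d\otimes\CC^n$, the restriction of $C_k\otimes Z_k$ becomes exactly $C_k\otimes T_k$, so the compression of $L_A(Z)$ to $V$ is precisely $L_A(T)$.

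With those two claims in hand the conclusion is immediate: since $L_A(Z)\succ 0$ on $H$, one has $\langle L_A(Z)\xi,\xi\rangle>0$ for every nonzero $\xi\in V$, so $L_A(T)\succ 0$ on $\CC^d\otimes\CC^n$ and $T\in\fP_A$. The only real subtlety in writing this up is the index bookkeeping at the bridge $k=\nu$; the underlying observation is simply that the three rank-one matrices $\ee_1\ee_1^*$, $\ee_1\ee_2^*$, and $\ee_2\ee_2^*$ splice together into a single path through $\CC^2$ along which the $T_j$'s are carried unchanged, while the spurious $\CC^2$-factor is compressed out.
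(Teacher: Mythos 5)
Your proof is correct. The paper omits the argument, saying it is ``similar to that of Lemma~\ref{l:nopi+}''; tracing that template in reverse, the intended route is to conjugate $Z$ by the swap unitaries $W_k=\ee_1\ee_2^*+\ee_2\ee_1^*$ for $k\ge\nu$ (and $W_k=I$ for $k<\nu$) via the hyper-Reinhardt Lemma~\ref{l:U's}. A short computation shows $(W\AST Z)_j=\ee_1\ee_1^*\otimes T_j$ for every $j$, so $L_A(W\AST Z)$ decomposes as $L_A(T)\oplus I$ on the $\ee_1$- and $\ee_2$-eigenspaces, and positivity of $L_A(T)$ drops out. Your argument is the same geometry seen without the conjugation: your subspace $V$ is exactly the image of $\bigoplus_k\CC^{d_k}\otimes\ee_1\otimes\CC^n$ under the block-diagonal unitary $D_W$ built from those swaps, so compressing $L_A(Z)$ to $V$ is literally restricting $L_A(W\AST Z)$ to its first summand. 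What your version buys is self-containment---you never invoke Lemma~\ref{l:U's} or the Reinhardt structure, only an explicit invariant subspace---while the paper's version makes visible that the hyper-Reinhardt symmetry is what underlies the reduction. One small presentational remark: you phrase the conclusion in terms of the \emph{compression} $P_VL_A(Z)P_V|_V$, but since you also prove $V$ is invariant under $L_A(Z)$ the compression coincides with the restriction, which is cleaner to cite; either way, $\langle L_A(Z)\xi,\xi\rangle>0$ for $0\ne\xi\in V$ gives $L_A(T)\succ0$.
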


\begin{proof}
 The proof of this lemma is similar to that 
 of Lemma~\ref{l:nopi+} and is omitted.
\end{proof}

\begin{proof}[Proof of Proposition~\ref{p:nopi}]
 Let $\mu$ denote the first index $\mu$ such that 
 $\rho^{-1}(\mu+1)<\rho^{-1}(\mu).$
 Such an index must exist since $\rho$ is not the identity
 permutation. Hence 
 $\rho^{-1}(\mu)>  \rho^{-1}(\mu-1) > \dots >\rho^{-1}(1);$
 in particular $\rho^{-1}(j) < \rho^{-1}(\mu)$ for $j< \mu.$

 Let $X\in M_n(\C)^{\vg}$
 be given and suppose $(X_1,\dots,X_{\mu},0,\dots,0),$
 and $(0,\dots,0,X_{\mu+1},\dots,X_{\vg})$ are both
 in $\fP_A.$   Define 
 a tuple $Z$ as follows:
\[
 Z_j=\begin{cases} \ee_1\ee_2^* \otimes X_\mu & \mbox{ for } j=\mu \\
      \ee_1\ee_1^*\otimes X_j  & \mbox{ for }
        \rho^{-1}(j)<\rho^{-1}(\mu) \\
      \ee_2\ee_2^* \otimes X_j & \mbox{ for }
               \rho^{-1}(j)>\rho^{-1}(\mu). \end{cases} 
\]
 Let $\nu=\rho^{-1}(\mu).$
 Since $\rho^{-1}(j)<\nu$ for $j<\mu$ and 
 $j=\mu+1,$ 
 Lemma~\ref{l:nopi+} implies $Z\in \fP_A.$ Thus $S=\psi(Z)\in \fP_A$
 and $S_k=Z_{\rho(k)}.$ 
 If $k<\nu$ and $k=\rho^{-1}(j),$ then 
 $S_k=Z_j=\ee_1\ee_1^*\otimes X_j.$ 
 If $k>\nu$ and $k=\rho^{-1}(j),$  then $S_k=Z_j=\ee_2\ee_2^*\otimes X_j.$
 Finally, if $k=\nu,$ then $S_k = Z_{\rho(k)} =Z_\mu
 =\ee_1\ee_2^* \otimes X_\mu.$
 An application of Lemma~\ref{l:nopi-} implies 
 the tuple $T$ with $T_k=X_{\rho(k)}$ is in $\fP_A;$
 that is $\psi(X)\in \fP_A.$ Hence $\psi^{-1}(\psi(X))=X\in \fP_A.$
 Thus $\fP_A$ is a coordinate direct sum.
\end{proof}


\addresseshere

\printindex

\end{document}